\newcommand{\stkout}[1]{\ifmmode\text{\sout{\ensuremath{#1}}}\else\sout{#1}\fi}
\numberwithin{equation}{section}
\newtheorem*{namedtheorem}{\theoremname}
\newcommand{\theoremname}{testing}
\newtheorem*{rep@theorem}{\rep@title}
\newcommand{\newreptheorem}[2]{%
\newenvironment{rep#1}[1]{%
 \def\rep@title{#2 \ref{##1}}%
 \begin{rep@theorem}}%
 {\end{rep@theorem}}}
\theoremstyle{plain}
\newtheorem{theorem}{Theorem}[section]
\newtheorem{proposition}[theorem]{Proposition}
\newtheorem{proposition-definition}[theorem]{Proposition-Definition}
\newtheorem{lemma-definition}[theorem]{Lemma-Definition}
\newtheorem{corollary}[theorem]{Corollary}
\newtheorem{lemma}[theorem]{Lemma}
\newtheorem{construction}[theorem]{Construction}
\theoremstyle{definition}
\newtheorem{definition}[theorem]{Definition}
\newtheorem{Assumption}[theorem]{Assumption}
\newtheorem{example}[theorem]{Example}
\newtheorem{remark}[theorem]{Remark}
\theoremstyle{remark}
      \newcommand{\R}{{\mathbb R}}
      \newcommand{\M}{{\mathcal M}}
      \newcommand{\MF}{{\mathfrak M}}
      \newcommand{\Z}{{\mathbb Z}}
      \newcommand{\K}{{\mathcal K}}
      \newcommand{\X}{{\mathcal X}}
      \newcommand{\A}{{\mathcal A}}
      \newcommand{\wt}{\widetilde}
      \newcommand{\etale}{\' etale\xspace}
      \newcommand{\Etale}{\' Etale\xspace}
\newcommand\ul[1]{\underline{#1}}
\newcommand\ir{\operatorname{ir}}
\newcommand\fs{\operatorname{fs}}
\newcommand\spl{\mathrm{spl}}
\newcommand\gl{\mathrm{gl}}
\newcommand\scrM{\mathscr{M}}
\newcommand{\ev}{\mathrm{ev}}
\newcommand\cA{\mathcal{A}}
\newcommand\cK{\mathcal{K}}
\newcommand\cM{\mathcal{M}}
\newcommand\cN{\mathcal{N}}
\newcommand\cO{\mathcal{O}}
\newcommand\cP{\mathcal{P}}
\newcommand\cS{\mathcal{S}}
\newcommand\cX{\mathcal{X}}
\newcommand\cY{\mathcal{Y}}
\newcommand\cZ{\mathcal{Z}}
\newcommand\GG{\mathbb{G}}
\newcommand\kk{\Bbbk}
\newcommand\LL{\mathbb{L}}
\newcommand\NN{\mathbb{N}}
\newcommand\PP{\mathbb{P}}
\newcommand\RR{\mathbb{R}}
\newcommand\ZZ{\mathbb{Z}}
\newcommand\bA{\mathbf{A}}
\newcommand\bP{\mathbf{P}}
\newcommand\bS{\mathbf{S}}
\newcommand\bg{\mathbf{g}}
\newcommand\bp{\mathbf{p}}
\newcommand\bu{\mathbf{u}}
\newcommand{\evt}{\operatorname{evt}}
\newcommand\fC{\mathfrak{C}}
\newcommand\fI{\mathfrak{I}}
\newcommand\fM{\mathfrak{M}}
\newcommand\fV{\mathfrak{V}}
\newcommand\btau{\pmb{\tau}}
\newcommand\bomega{\pmb{\omega}}
\newcommand\bsigma{\pmb{\sigma}}
\newcommand{\brho}{\pmb{\rho}}
\newcommand{\sat}{\operatorname{sat}}
\newcommand\arr{\ifinner\to\else\longrightarrow\fi}
\newcommand\im{\operatorname{im}}
\def\displaytimes_#1{\mathrel{\mathop{\times}\limits_{#1}}}
\def\displayotimes_#1{\mathrel{\mathop{\bigotimes}\limits_{#1}}}
\newcommand\coker{\operatorname{coker}}
\newcommand\Aut{\operatorname{Aut}}
\newcommand\Tor{\operatorname{Tor}}
\newcommand\spec{\operatorname{Spec}}
\newcommand\Spec{\operatorname{Spec}}
\newcommand\rank{\operatorname{rank}}
\newcommand\virt{{\operatorname{virt}}}
\newcommand\id{\mathrm{id}}
\newcommand\pr{\operatorname{pr}}
\newcommand\doublelong[2]{\mathbin{\xymatrix{{}\ar@<3pt>[r]^{#1}
\ar@<-3pt>[r]_{#2}&}}}
\newlength{\ignora}
\newcommand{\ind}{\operatorname{Ind}}
\renewcommand{\red}{{\mathrm{red}}}
\newcommand{\ol}{\overline}
\newcommand{\gp}{{\mathrm{gp}}}
      \def\@setcopyright{}
      \def\serieslogo@{}
      \def\l@subsection{\@tocline{2}{0pt}{2.5pc}{5pc}{}}
      \def\l@subsubsection{\@tocline{2}{0pt}{5pc}{7.5pc}{}}
\newcommand{\changelocaltocdepth}[1]{%
  \addtocontents{toc}{\protect\setcounter{tocdepth}{#1}}%
  \setcounter{tocdepth}{#1}%
}
\begin{document}

\title{Splitting of Gromov-Witten invariants with toric gluing strata}


\author{Yixian Wu}
\address{Department of Mathematics, The University of Texas at Austin, 2515 Speedway, Austin, TX 78712, USA}
\email{yixianwumath@gmail.com}
\thanks{NSF Grant DMS-1903437}


\date{}

\dedicatory{}


\begin{abstract}
We prove a splitting formula that reconstructs the logarithmic Gromov-Witten invariants of simple normal crossing varieties from the punctured Gromov-Witten invariants of their irreducible components, under the assumption of the gluing strata being toric varieties. The formula is based on the punctured Gromov-Witten theory developed by Abramovich, Chen, Gross and Siebert.
\end{abstract}

\maketitle
\tableofcontents

\section{Introduction}

\changelocaltocdepth{1}
Relative Gromov-Witten invariants of a smooth projective variety $Y$ and a smooth divisor $D$, developed in \cite{AMLYBR2001}\cite{Ionel2000TheSS}\cite{li2001}\cite{li2002}, has been one of the most important techniques to calculate Gromov-Witten invariants. For a degenerating family of projective schemes $X\rightarrow B$ with general fiber over $b\in B$ a smooth variety $X_b$ and the central fiber $X_0$ the union of two smooth irreducible components $Y_1,Y_2$ meeting along a smooth divisor $D$, a degeneration formula is obtained to relate the Gromov-Witten invariants of $X_b$ with the relative Gromov-Witten invariants of $(Y_1,D)$ and $(Y_2,D)$.

Recently, logarithmic Gromov-Witten theory developed in \cite{GSLogGromovWitten} \cite{QileChen2014Slmt} \cite{abramovich2014} has been proved to be a successful generalization of the relative Gromov-Witten theory to the case of $D$ being a normal crossing divisor of $Y$. Especially, for a degenerating family with central fiber $X_0$ a normal crossing variety, a decomposition formula is obtained in \cite{abramovich2020decomposition} that relates the Gromov-Witten invariants of $X_b$ with the logarithmic Gromov-Witten invariants on $X_0$ of rigid decorated \textit{tropical types}. The rigid decorated \textit{tropical types} $\btau$ restrict the combinatorics of the maps, including the dual intersection graphs, the image cones of irreducible components, marked and nodal points, the contact orders and the curve classes. 

To further decompose the logarithmic Gromov-Witten invariants of $X_0$ of type $\btau$ to the invariants of irreducible components of $X_0$, the theory of punctured Gromov-Witten invariants is built in \cite{ACGSPunc}. Punctured Gromov-Witten theory studies logarithmic maps with domain being punctured logarithmic curves, which naturally occur after splitting log smooth curves along nodal points. The combinatorics of the split maps are encoded in tropical sub-types $\btau_1,...,\btau_r$. There is a natural splitting morphism 
\begin{equation*}
\scrM(X/B,\btau) \rightarrow \prod_{i=1}^r \scrM(X/B,\btau_i).
\end{equation*} 

In this paper, we prove an explicit formula (Theorem \ref{main theorem}) presenting the virtual fundamental class of $\scrM(X/B,\btau)$ under splitting as the products of the strata of $\scrM(X/B,\btau_i)$ associated to $\btau_i$-marked tropical types, under the assumption that the gluing strata are toric varieties whose log stratifications are the same as the toric stratifications. A numerical splitting formula of logarithmic Gromov-Witten invariants (Corollary \ref{numericalsplitting}) is obtained as a direct corollary.

\subsection{The main results}
Let $B$ be a log point $(\Spec \kk, \cM_B)$, whose log structure is determined by a chart $Q_B\rightarrow \kk$ with $Q_B$ a toric monoid. Let $X\rightarrow B$ be a projective log smooth morphism between fine, saturated log schemes with Zariski log structures. Let $\beta$ be a curve class in $X$.

The moduli space $\scrM(X/B,\btau)$ of basic stable punctured maps marked by a global decorated type $\btau$ is a logarithmic algebraic stack (\cite[Thm A]{ACGSPunc}). The tropicalization of $\scrM(X/B,\btau)$ is locally determined by the tropical types of the maps. For a geometric point in $\scrM(X/B,\btau)$ with tropical type $\bomega$, there is an \textit{associated basic cone} $\ol{\bomega}$ (Definition \ref{evaluationcone}) of $\bomega$ parametrizing the tropical maps of type $\bomega$. {Supposing $\ol{x}'$ is a geometric point lying in the closure of $\ol{x}$}, there is a canonical \textit{contraction morphism} (Definition \ref{contraction morphism}) from $\bomega'$ to $\bomega$, with $\bomega'$ the tropical type associated to $\ol{x}'$. The contraction morphism induces an inclusion of the associated basic cone $\ol{\bomega}$ as a face of $\ol{\bomega}'$. The tropicalization of $\scrM(X/B,\btau)$ is defined to be the colimit of the basic cones over the geometric points under the above maps. 

In order to define logarithmic evaluation maps, we need to modify the log structure on $\scrM(X/B,\btau)$ based on the set $\bS$ of nodal and punctured points where we evaluate at (Section \ref{logevmapsection}). The tropicalizations of the modified moduli spaces are now determined by the \textit{associated evaluation cones} $\wt{\bomega}_{\bS}$  (Definition \ref{evaluationcone}), parametrizing the tropical maps with type $\bomega$ together with a marking on each edge and leg corresponding to points in $\bS$. 

Splitting a logarithmic map along nodal points of the domain can be described easily using the tropical types. Fix a subset $\bS$ of edges in the graph $G$ of $\btau$. Cutting along each edge $p\in \bS$ results in a set of global decorated types $\btau_1,...,\btau_r$ with $\bS_1,...,\bS_r$ the set of additional half legs from the edges in each type. We use $\wt{\bomega} = \wt{\bomega}_{\bS}$ and $\wt{\bomega}_i = \wt{\bomega}_{i,\bS_i}$ to denote the associated evaluation cones of types $\bomega$ and $\bomega_i$ marked by $\btau$ and $\btau_i$.

\begin{theorem}\cite[Thm C]{ACGSPunc}
There is a finite, representable morphism of moduli spaces of punctured log stable maps to $X$ over $B$
\begin{equation*}\label{splittingmorphism}
    \delta: \scrM(X/B, \btau) \rightarrow \prod_{i=1}^r \scrM(X/B,\btau_i).
\end{equation*}
\end{theorem}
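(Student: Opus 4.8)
This is \cite[Prop.~5.4]{ACGSPunc}; the plan is to reconstruct its proof, which concerns punctured log maps only and uses nothing about toric strata. Throughout, $\bS\subseteq E(G)$ is the chosen set of edges of the graph $G$ of $\btau$. I would define $\delta$ on $T$-valued points by \emph{cutting} along the nodes labelled by $\bS$: given a punctured log stable map $f\colon C/W\to X$ over $B$ marked by $\btau$, the edges in $\bS$ single out a set of nodes of the underlying curve $\uC$; let $\uC'\to\uC$ be the partial normalization at exactly those nodes. Then $\uC'$ is the disjoint union of connected curves $\uC_1,\dots,\uC_r$ matching the components of the graph obtained from $G$ by splitting $\bS$, and each $\uC_i$ inherits the original special points lying on it together with one new point for every branch, on it, of a node labelled by $\bS$. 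Restricting $f$ and pulling back the log structure from $C$ turns each $\uC_i$ into a \emph{punctured} log map to $X$: at a new point one obtains precisely a puncturing log structure, because a branch of a node carries the ghost extension $\ol{M}_W\oplus\N\subseteq\cP\subseteq\ol{M}_W\oplus\Z$ dictated by the local nodal model. One then checks that the discrete data---contact orders $\ol{\bu}$, curve classes $\bA$, target strata $\bsigma$, genera $\bg$---of the pieces are exactly those recorded by $\btau_i$, and that basicness is inherited; this produces the functor, hence $\delta$.

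\emph{Representability.} I would get this from injectivity of $\delta$ on automorphism groups. An automorphism of a $\btau$-marked punctured map is in particular an automorphism of $\uC$ over $\uW$ commuting with $f$ and fixing all special points; since $\uC$ is the gluing of the $\uC_i$ along $\bS$, such an automorphism is nothing but a tuple of automorphisms of the $(\uC_i,f_i)$ that agree as the identity on the glued points, and this tuple is exactly its image under $\delta$. If every component of the tuple is trivial, so is the automorphism; hence $\delta$ is injective on automorphisms, and as source and target are Deligne--Mumford stacks this makes $\delta$ representable.

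\emph{Finiteness.} The moduli stacks of punctured log maps with a fixed decorated type are proper Deligne--Mumford stacks over $B$ by \cite{ACGSPunc}; alternatively, properness of $\delta$ can be checked by the valuative criterion, using that a gluing of punctured log maps along punctured points of opposite contact order is unique whenever it exists and exists as soon as a gluing of the generic restriction does. In either case $\delta$ is proper, so it suffices to prove it quasi-finite, since a representable, proper, quasi-finite morphism is finite. Fix a geometric point $\big((\uC_i,f_i,M_{C_i})\big)_i$ of the target. Every point of its $\delta$-fibre has underlying curve the gluing of the $\uC_i$ along $\bS$ prescribed by $G$ and underlying map the glued $f$---which forces the evaluations at the two branches of each $e\in\bS$ to agree in the relevant stratum of $X$, a locally closed condition---so the only remaining freedom is an fs log structure on this fixed underlying curve making it a punctured log map of type $\btau$ and restricting to the given $M_{C_i}$. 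At each node $q$ over $e\in\bS$ this amounts to choosing a smoothing element in $\ol{M}_W$ and a compatible identification of the two puncturing submonoids, and compatibility with the basic monoid $Q_{\btau}$ of $\btau$---which differs from $\prod_i Q_{\btau_i}$ only through the edge relations at $\bS$---leaves only finitely many choices, their number governed by the lattice lengths $\ell_e$ of the contact orders along the $e\in\bS$ (the multiplicities that reappear in the splitting formula). Thus the fibre is finite and $\delta$ is finite.

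\emph{Main obstacle.} The genuinely delicate point is this last log-theoretic step: because punctured points carry the enlarged sheaf $\cP$ allowing negative contact orders, the standard smoothing/gluing dictionary for prestable log curves has to be re-established in the punctured setting, and showing that the gluing data form a \emph{finite} rather than a positive-dimensional family is precisely what must be imported from \cite[\S5]{ACGSPunc}.
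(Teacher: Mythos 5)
The paper itself does not prove this statement: it is quoted verbatim from \cite[Prop.~5.4]{ACGSPunc}. So there is no ``paper's own proof'' to compare against; I am evaluating your sketch against the expected argument in the cited reference.

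Your overall architecture is the right one and matches \cite{ACGSPunc}: build $\delta$ by partial normalization along the nodes singled out by $\bS$, prove representability via injectivity of $\delta$ on automorphism groups (using that both sides are Deligne--Mumford), and deduce finiteness from properness plus quasi-finiteness (properness of $\scrM(X/B,\btau)$ over $B$ and separatedness of the target already give properness of $\delta$, without needing a separate valuative-criterion argument).

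Two small cautions. First, the phrase ``basicness is inherited'' is a slight oversimplification. Restricting a basic punctured log map of type $\btau$ to a piece yields a punctured log map of type $\btau_i$ over the \emph{same} base $W$, whose ghost monoid is $Q_\btau$, not $Q_{\btau_i}$; there is a canonical monoid map $Q_{\btau_i}\to Q_{\btau}$ (dual to the restriction of basic cones), and one must check that the restricted map descends along the corresponding base change to its basic model over $\Spec(Q_{\btau_i}\to\kk)$. This is part of what \cite[\S5.1]{ACGSPunc} establishes; it is not a free consequence of cutting, and your sketch should flag it. Second, in the quasi-finiteness step, the ``smoothing element'' is not chosen inside a fixed $\ol{M}_W$: the base $W$ for a preimage point carries the basic monoid of $\btau$, which is reconstructed from the bases of the pieces and the edge data along $\bS$; the finiteness of the fibre comes from the fact that this reconstruction involves only a finite amount of ambiguity (governed by lattice indices of the edge equations), as your last paragraph correctly identifies as the crux. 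These are issues of precision rather than of approach; the proof plan itself is sound.
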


{\color{black}
For each edge $p\in\bS$, the tropical type $\btau$ determines a cone $\bsigma(p)$ of $\Sigma(X)$ (Definition \ref{globaltypedefinition}), and a log scheme $V_p: = V_X(\bsigma(p))$, the logarithmic stratum of $X$ of $\bsigma(p)$. The logarithmic subscheme $V_p$ is the gluing stratum where the nodal point of $p$ is restricted on by $\btau$. The reverse process of gluing punctured maps of type $\btau_i$ requires both schematic and tropical matching for nodal points. Though in general complicated,  under the case of the gluing strata being toric varieties, the gluings of the logarithmic maps are completely determined by the tropical information.

\begin{Assumption}
\label{toricassumption}
Assume $B = \spec(\kk\rightarrow Q_B)$ is  a log point with $Q_B$ a toric monoid. Suppose $X\rightarrow B$ is an integral, log smooth morphism between fine, saturated log schemes. Assume $\ol{\cM}_X$ is globally generated, and for each edge $p\in\bS$, the strict closed subscheme $V_p$ of the log scheme $X$ has the underlying scheme a toric variety, and the log stratification of $V_p$ is the same as the toric stratification.  
\end{Assumption}
}
\begin{lemma}(Proposition \ref{Vp proposition} + Theorem \ref{toricstratacondition})
There exists a toric variety $X_p$ associated to the fan $(\Sigma_p,N_p)$ with canonical toric log structure, such that $V_p$ is isomorphic to a toric stratum of $X_p$.
\end{lemma}

Following the construction of $\Sigma_p$, there is a map of tropicalization $\Sigma_p \rightarrow \Sigma(B)$. Let $\Sigma_{\bomega}^B$ be the relative product of $\Sigma_p$ over the tropicalization of $B$, and 
$\prod_{p\in \bS}^B N_{p,\RR}$
 be the relative product of $N_{p,\RR}$ over $N_{B,\RR}$. The logarithmic evaluations along markings induce the tropical evaluations  maps
\begin{equation}
\evt_{\bomega}^B :\wt{\bomega}_{\bS} \rightarrow \Sigma_{\bomega}^B \rightarrow \prod_{p\in \bS}^B N_{p,\RR}
.
\end{equation}
For curves of types $\bomega_i$, the tropical matching condition is a fiber diagram
\begin{equation*}
\begin{tikzcd}
(\varepsilon_{\bomega}^B)^{-1}(0) \arrow[r] \arrow[d] &  \prod_i^B \wt{\bomega}_{i}
\arrow[d,"\varepsilon_{\bomega}^B"] \\
0 \arrow[r] & \prod_{p\in \bS}^B N_{p,\RR},
\end{tikzcd}
\end{equation*}
where 
\begin{equation} \label{varepsilondefinitionoverB}
\varepsilon_{\bomega}^B: \prod_i^B \wt{\bomega}_{i} \xrightarrow{\prod \evt^B_{\bomega_i}} \prod_{p\in \bS}^B N_{p,\RR} \times \prod_{p\in \bS}^B N_{p,\RR} \xrightarrow{\prod \coker \ol{\Delta}_p} \prod_{p\in \bS}^B N_{p,\RR},
\end{equation}
with the second map the cokernel of the diagonal map. A lot of times, it is more convenient to work with the absolute products than the relative products over $B$. So we introduce the tropical evaluation maps 
\begin{equation}
\evt_{\bomega} :\wt{\bomega}_{\bS} \rightarrow \Sigma_{\bomega} \rightarrow \prod_{p\in\bS} \Sigma_p \rightarrow \prod_{p\in \bS} N_{p,\RR}.
\end{equation}
and 
\begin{equation} \label{varepsilondefinition}
\varepsilon_{\bomega}: \prod_{i=1}^r \wt{\bomega}_{i} \xrightarrow{\prod \evt_{\bomega_i}} \prod_{p\in \bS} N_{p,\RR} \times \prod_{p\in \bS} N_{p,\RR} \xrightarrow{\prod \coker \ol{\Delta}_p} \prod_{p\in \bS} N_{p,\RR}
\end{equation}
with all products being absolute products. For the rest of the paper, we mostly work with the absolute products. We will emphasize the relative products when they are used.

The maps \eqref{varepsilondefinitionoverB} and \eqref{varepsilondefinition} tell the difference by evaluating at two half edges after splitting. Instead of requiring the evaluations to be matched along split edges, we introduce \textit{generic displacement vectors} and require the maps to be matched after the perturbation along this vector. The minimal types satisfying the new matching conditions determine the components of a substack of $\prod_{i=1}^r \scrM(X/B,\btau_i)$ rationally equivalent to $\delta(\scrM(X/B,\btau))$. The idea is inspired by the intersection theory of toric varieties in \cite{FS97}. 

\begin{definition} \label{globalgluingdata}

\begin{enumerate}
\item A vector $\fV$ in $\prod_{p\in \bS}N_{p}$ is a \textit{displacement vector} if it lies in the sublattice generated by $\prod_{p\in \bS}^B N_{p}$, the relative product over $N_B$.
\item A displacement vector $\fV$ is a \textit{generic displacement vector} if for all types $[\brho] = (\brho_1,...,\brho_r)$ such that 
\begin{enumerate}[label = (\roman*)]
\item $\brho_i$ admits a contraction morphism to $\btau_i$, for $i=1,...,n$,
\item $\fV \in \im(\varepsilon_{\brho})$, for $\varepsilon_{\brho}$ defined in \eqref{varepsilondefinition}, and 
\item 
\begin{equation*}
\begin{split}
\sum_{i=1}^r \dim \wt{\brho}_i- \dim \wt{\btau} & = \sum_{p\in \bS}\dim N_p - (|\bS|-r+1) \cdot \rank Q_B^{\gp}.
\end{split}
\end{equation*}
\end{enumerate}
then, the maps $[\varepsilon_{\brho}^B]^{\gp}$ are surjective, for the relative map $\varepsilon_{\brho}^B$ defined in \eqref{varepsilondefinitionoverB}.

We use $\Delta(\fV)$ to denote the set of types $[\brho]$ satisfying the above conditions. We will show in Remark \ref{equivalentRemark} that for all $[\brho]\in \Delta(\fV)$, $\fV$ does not lie in the boundary of $\im(\varepsilon_{\brho})$, which is the definition used in the previous version of this paper.

By condition (i), the types in $\Delta(\fV)$ determine strata in $\prod_{i=1}^r \scrM(X/B,\btau_i)$. Condition (ii) requires the existence of tropical maps with type $(\brho_1,\ldots, \brho_r)$ that match along splitting edges after the perturbation along $\fV$.  Condition (iii) requires the types to have expected virtual dimension. Note that the right hand side of condition (iii) is the dimension of the relative product $\prod_{p\in\bS}^B N_{p,\RR}$. Since the image of $\wt{\btau}$ under $\varepsilon^B_{\btau}$ is $0$, the type $[\brho]$ has the minimal dimension that allows $[\varepsilon_{\brho}^B]^{\gp}$ to be surjective.

\item For each $[\brho]\in \Delta(\fV)$, we define the multiplicity
\begin{equation*}
    m_{[\brho]} = \big[\im(\ol{\varepsilon}_{\brho})^{\sat}:\im(\ol{\varepsilon}_{\brho})],
\end{equation*}
where $\ol{\varepsilon}_{\brho_i}$ is the lattice map associated to ${\varepsilon}_{\brho_i}$ and $\im(\ol{\varepsilon}_{\brho})^{\sat}$ is the saturation of the sublattice $\im(\ol{\varepsilon}_{\brho})$ in $\prod_{p\in \bS}N_p$. 

\end{enumerate}
\end{definition}

\begin{remark} \label{equivalentRemark}
\begin{enumerate}
\item A generic displacement vector $\fV$ must lie in the interior of $\im(\varepsilon_{\brho})$ for all $[\brho] \in \Delta(\fV)$. Suppose not, then $\fV$ lies in $\im(\varepsilon_{\brho'})$ where $[\brho']$ is a strict contraction of $[\brho]$. Therefore, the sum $\sum_{i=1}^r \dim \wt{\brho}'_i$ does not satisfy condition (iii) as it is too small. By adding nodes on edges or legs of $[\brho']$, we can enlarge the dimension and construct a type $[\brho'']$ satisfying the conditions in Definition \ref{globalgluingdata}. Thus, the type $[\brho'']$ is in $\Delta(\fV)$. However, the map $[\varepsilon_{\brho''}^B]^{\gp}$ is equal to $[\varepsilon_{\brho'}^B]^{\gp}$, which can not be surjective, as the dimension of $\wt{\brho_i}'$ is not large enough. It contradicts to the assumption that $\fV$ is generic.
\item When the dual intersection graph $G$ is a tree, the dimension condition becomes 
\begin{equation*}
\sum_{i=1}^r \dim \wt{\brho}_i- \dim \wt{\btau} = \sum_{p\in \bS}\dim N_p.
\end{equation*}
Suppose the logarithmic point $B$ is trivial, then the definition of generic displacement vector here can be rephrased as the definition of \textit{general displacement vector} in \cite[Def.A.2]{canonical_wall}, where a general displacement vector is a vector $\fV$ satisfying the condition that for all types $\brho_i$ with $\im(\varepsilon_{\brho}^{\gp})$ not being surjective, $\fV$ does not lie in $\im(\varepsilon_{\brho}^{\gp})$.
\end{enumerate}
\end{remark}

Now, we are ready to state the main result:

\begin{theorem} \label{main theorem}
Let $X$ be a fine, saturated logarithmic projective scheme, log smooth over a log point $B = \spec(Q_B\rightarrow \kk)$, with $Q_B$ a toric monoid. Let $\btau$ be a decorated global tropical type. Fix a set of the splitting edges $\bS$ and let $\btau_1,\ldots\btau_r$ be the decorated global types obtained after splitting.

Suppose the Assumption \ref{toricassumption} is satisfied. Let $\fV$ be a generic displacement vector defined in Definition \ref{globalgluingdata}. Then, for the finite, representable morphism of moduli spaces of punctured stable log maps
\begin{equation*}
    \delta: \scrM(X/B, \btau) \rightarrow \prod_{i=1}^r \scrM(X/B,\btau_i),
\end{equation*}
the following equation holds
\begin{equation} \label{main equation}
    \delta_*[\scrM(X/B, \btau)]^{\virt} = \sum_{[\brho] \in \Delta(\fV)}
    \prod_{i=1}^r \frac{m_{[\brho]}}{|\Aut(\brho_i/\btau_i)|} \cdot  j_{\brho_i\btau_i*}[\scrM(X/B,\brho_i)]^{\virt},
\end{equation}
with $j_{\brho_i\btau_i}$ the finite morphism from $\scrM(X/B,\brho_i)$ to $\scrM(X/B, \btau_i)$ associated to the contraction morphism $\brho_i\rightarrow \btau_i$, and $\Aut(\brho_i/\btau_i)$ the automorphism group of $\brho_i$ relative to $\btau_i$.
\end{theorem}

A special case of Theorem \ref{main theorem} is the splitting of $\btau$ at all edges. Then each
split type $\btau_i$ consists of one vertex with a number of legs, with the associated image stratum strictly smaller than the
full target. In this case, \eqref{main equation}
expresses the punctured invariants of type $\btau$ in terms of punctured
invariants of these logarithmic strata. For example, in a degeneration situation
as in \cite{abramovich2020decomposition}, this expresses the Gromov-Witten invariants of a
general fiber in terms of the punctured invariants of the strata of the central
fiber. Such localization to the strata does not follow from the general gluing formulas in \cite{parker2017tropical}, \cite[Thm C]{ACGSPunc} and \cite{ranganathan2020logarithmic}.

A direct corollary of the above theorem is a numerical formula of logarithmic Gromov-Witten invariants.

\begin{corollary} \label{numericalsplitting}
Follow the situation in Theorem \ref{main theorem}. Fix a subset $\bP$ of legs of the graph of $\btau$, which corresponds to a subset of punctured points. Let $\bP_i$ be the legs that lies in $\btau_i$ after splitting, for $i=1,\ldots,n$. There are evaluation maps $e: \ul{\scrM(X/B,\btau)}\rightarrow \ul{X}^{|\bP|}$ along punctured points in $\bP$ and $e_{\brho_i}: \ul{\scrM_{\brho_i}(X/B,\btau_i)}\rightarrow \ul{X}^{|\bP_i|}$ along punctured points $\bP_i$, for $\brho_i$ the $\btau_i$-marked decorated types. 

Let $\beta \in H^*(X^{|\bP|})$ be a cohomology class with a Künneth decomposition
\begin{equation*}
\beta = \sum _{\mu} \alpha_{\mu} \cdot \beta_{\mu,1} \boxtimes\ldots\boxtimes \beta_{\mu,r},
\end{equation*} 
where $\beta_{\mu,i} \in H^*(X^{|\bP_i|})$, for $i=1,\ldots,n$. Then, 
\begin{equation*}
\begin{split}
	& \delta_*\Big[\int_{[\scrM(X/B,\btau)]^{\virt}} e^*(\beta)\Big]  \\	 = \sum_{\mu} \sum_{[\brho] \in \Delta(\fV)} & \alpha_{\mu} \cdot \prod_{i=1}^r \frac{m_{[\brho]}}{|\Aut(\brho_i/\btau_i)|} \cdot  \int_{[\scrM(X/B,\brho_i)]^{\virt}} j_{\brho_i\btau_i}^*e_{\brho_i}^*(\beta_{\mu,i}).
\end{split}
\end{equation*}
\end{corollary}
\begin{proof}
The claim is a direct result of Theorem \ref{main theorem}, following the projection formula.
\end{proof}

\subsection{Idea of the proof and structure of the paper}
The foundation of the paper is based on the punctured Gromov-Witten invariants in \cite{ACGSPunc}. In Chapter \ref{background}, we provide a brief review of punctured Gromov-Witten theory and the gluing formalism. We briefly cover the basic theory of the moduli spaces of punctured logarithmic maps and the virtual theory over the moduli of the maps to the relative Artin fans in Section \ref{evaluationlogstructuresection}. We study the evaluation log structures in Section \ref{logevmapsection}. There are canonical \textit{evaluation idealized structures} on the modified moduli spaces such that they are idealized log smooth (Proposition \ref{tildeevisidealizedlogsmooth}). In Section \ref{gluingformalismsection}, we recall the gluing formalism studied in \cite[\S 5.2]{ACGSPunc}. It is shown in Proposition \ref{gluedmodulireducedsetup} that up to a reduction of the moduli spaces, it is sufficient to study the commutative diagram 
\begin{equation}  \label{globalgluingdigaram}
\begin{tikzcd}
\wt{\fM}^{\gl,\ev}_{\red} \arrow[r,"\delta_{\red}^{\ev}"] \arrow[d,"\ev"] & \prod_{i=1}^r \wt{\fM}_{\btau_i,\red}^{\ev} \arrow[d, "\prod \ev_{\tau_i}"] \\ X_{\btau} \arrow[r,"\Delta_X"] & \prod_{i=1}^r X_{\btau_i}
\end{tikzcd}
\end{equation}
following the fiber diagram \eqref{gluingfiberdiagram}.

Such diagram has nice properties. First, the moduli spaces $\prod_{i=1}^r \wt{\fM}_{\btau_i,\red}^{\ev}$ and the evaluation maps are both idealized log smooth (Prop \ref{tildeevisidealizedlogsmooth}, Cor \ref{evisidealizedlogsmooth}). Hence, locally they admit charts of toric morphisms. Second, under Assumption \ref{toricassumption}, the gluing strata $X_{\btau}$ and $X_{\btau_i}$ have global toric structures. The global toric structures provide a canonical patching of the splitting formulas from the local charts.

Since the local gluings are toric, we review the intersection theory in toric varieties in Chapter \ref{fssection} following \cite{FS97}. We give the necessary generalization of the Fulton-Sturmfels formula to toric stacks in Corollary \ref{keyquotientfs}.

The local form of the splitting formula is explored in detail in the first two sections of Chapter \ref{proofofthemaintheorem}. In Section \ref{toricstrata}, we study the structures of the gluing strata $X_{\btau}$ and $X_{\btau_i}$, based on the logarithmic fiber products of toric varieties studied in Appendix \ref{AppendixB}. Each of them is a disjoint union of log schemes isomorphic with each other, denoted $Z_{\btau}$ and $Z_{\btau_i}$ correspondingly. In Section \ref{localmodelsubsection}, we study the local chart of the gluing formalism \eqref{globalgluingdigaram}. \Etale locally, the moduli space $\prod_{i=1}^r \wt{\fM}_{\btau_i,\red}^{\ev}$ admits a smooth map to a quotient stack of a toric variety by an algebraic torus. The quotient stack $\cA^{\ev}$ is an evaluation enhancement of the Artin cone defined in \eqref{Aevdefinition} and has a canonical evaluation map to the gluing strata $\prod_{i=1}^r X_{\btau_i}$. By studying the gluing of $\cA^{\ev}$ using the generalized Fulton-Sturmfels formula, we obtain the splitting formula for $\cA^{\ev}$ in Lemma \ref{pushforwardforAglev} and the local splitting formula in Proposition \ref{localsplittingformula}. In Section \ref{gluingofthelocalmodels}, we finish the proof of the global splitting formula \ref{main theorem} (Proof.\ref{proofdetail}) by showing the splitting formula patches under a fixed generic displacement vector.

\subsection{Other approaches}
Relative Gromov-Witten invariants for smooth pairs $(X,D)$, studied in \cite{AMLYBR2001}\cite{Ionel2000TheSS}\cite{li2001}\cite{li2002}, are defined through the moduli spaces of stable maps to expansions of $X$ along $D$. The stable maps to the expansions are transverse, hence the degeneration formulas are obtained by gluing the underlying stable maps. Using the idea of expansion, the degeneration formulas for smooth pairs are studied using twisted stable maps in \cite{orbifoldtech} and logarithmic stable maps in \cite{kim2010}\cite{Chen2010TheDF}. These different approaches are proved to be identical with logarithmic Gromov-Witten invariants for smooth pairs in \cite{AIF_2014__64_4_1611_0}. In \cite{Kim2018TheDF}, Kim, Lho and
Ruddat provided a proof of the gluing formula for logarithmic Gromov-Witten invariants for smooth pairs without expansions using logarithmic technique. Because of the transverse nature of the underlying tropical geometry, all these approaches come with splitting formulas according to strata similar to our Corollary \ref{numericalsplitting}.

Combining the idea of expanded degenerations and tropical geometry, Ranganathan showed a general gluing formula of log Gromov-Witten invariants in the normal crossing settings in \cite{ranganathan2020logarithmic}. The numerical degeneration formula there requires the knowledge of a K\"{u}nneth decomposition of universal divisor expansions. We expect a similar splitting formula as we present can be obtained by proving an explicit K\"unneth formula for universal expansions of toric varieties. 

The gluing and splitting formalism using punctured Gromov-Witten invariants has a symplectic parallel by the theory of exploded manifolds due to Brett Parker in \cite{ParkerBrett2017TDTC} \cite{parker2017tropical}. The concept of generic deformation vectors in \cite{ParkerBrett2017TDTC} partially inspires our definition of the generic displacement vectors here. In a special case for rigid analytic Gromov-Witten invariants, a gluing formula has been proved by Yu \cite{yu2020enumeration}.

\color{black}
\subsection{Acknowledgement}
This work was supported by NSF grant DMS-1903437. I can not be more thankful to my advisor Bernd Siebert, who suggested this project and provided profound support, discussions and explanations. I thank Mark Gross for carefully reading and providing comments, corrections and simplifications of proofs to the paper. I would like to thank the referees for pointing out mistakes and many helpful suggestions. I thank Michel van Garrel for the opportunity to present this work in 3CinG Workshop 2020. I thank Lawrence Barrott, Rok Gregoric, Suraj Dash, Thomas Gannon and Yan Zhou for useful discussions.

\subsection{Conventions}
We follow the conventions in \cite{abramovich2020decomposition} and \cite{ACGSPunc}. All logarithmic schemes and stacks are fine and defined over an algebraically closed field $\kk$ over characteristic $0$. 

The affine log scheme with a global chart defined by a homomorphism $Q\rightarrow R$ from a monoid $Q$ to a ring $R$ is denoted $\Spec(Q\rightarrow R)$. For $Q$ a toric monoid, we define $Q^{\vee}: = \operatorname{Hom}(Q,\NN)$ and $Q^*: = \operatorname{Hom}(Q,\ZZ)$. We use $S_Q$ to denote affine toric variety $\spec(\kk[Q])$ with the canonical toric log structure and $T_Q$ to denote $\spec(\kk[Q^{\gp}])$. We define $\A_Q:=[S_Q/T_Q]$ to be the Artin cone of $Q$. Suppose $L\subseteq Q$ is an ideal of $Q$, then we use $S_{Q,L}$ to denote that subscheme of $S_Q$ determined by the ideal generated by $L$. We use $\cA_{Q,L}$ for the stack $[S_{Q,L}/T_Q]$.

For a toric variety $X$ and a cone $\sigma \in \Sigma(X)$ in the fan of $X$, we use $\cO_X(\sigma)$ to denote the algebraic torus $\spec \kk[\sigma^{\perp} \cap M]$ that is a subscheme of $X$, and we use $V_X(\sigma)$ to denote the closure of $\cO_X(\sigma)$ in $X$. For a Zariski log scheme $X$ and a cone $\sigma$ in the tropicalization of $X$, we use $V_X(\sigma)$ to denote the closed stratum whose dual cone of the stalk $\ol{\cM}_X$ at the generic point of $V_X(\sigma)$ is $\sigma$. For a logarithmic stack $X$ and a cone $\sigma$  in the tropicalization of $X$, we use $V_X(\sigma)$ to denote the strict closed integral substack with pullback $V_W(\sigma)$ on each Zariski smooth chart $W\rightarrow X$.
\textcolor{black}{
For a proper, representable morphism between logarithmic integral stacks $f: X\rightarrow Y$, we use $f_*[X]$ to denote the pushforward class $\ul{f}_*[\ul{X}]$, as studied in \cite[Def 3.6]{Vistoli1989} and \cite[Appendix B]{bae_schmitt_skowera_2022}.
}
We use $|\bS|$ to denote the cardinality of a finite set $\bS$.

\section{Punctured invariants and the gluing formalism} \label{background}
\changelocaltocdepth{2}
In this section, we give a brief introduction to the punctured Gromov-Witten invariants and the gluing formalism studied in \cite{ACGSPunc}. We show the gluing formalism admits a local model of fiber product of toric varieties in the category of fine, saturated log schemes. 
\subsection{Punctured Gromov-Witten invariants}

\label{evaluationlogstructuresection}

Let $X$ be a projective log smooth scheme over a log scheme $B$. A \textit{punctured log curve} over a log scheme $W$ is given by
\begin{equation*}
(C^{\circ}\xrightarrow{p} C \xrightarrow{\pi} W, \bp=(p_1,\ldots,p_n)),
\end{equation*}
where 
\begin{enumerate}
\item $C\rightarrow W$ is a logarithmic curve with a set of disjoint sections $\{p_1,\ldots,p_n\}$.
\item $C^{\circ}$ is a logarithmic curve with the underlying curve $\ul{C}$ and log structure 
\begin{equation*}
\cM_{C^{\circ}}\subset \cM_C\oplus_{\cO_C^{\times}} \cP^{\gp}
\end{equation*}
for $\cP\subset \cM_C$ the divisorial log structure along sections $\bp$, such that for any geometric point $\ol{x}\in \ul{C}$ and $s_{\ol{x}}\notin \cM_{\ol{x}}\oplus_{\cO_C^{\times}
} \cP_{\ol{x}}$, we have $\alpha_{C^{\circ}}(s_{\ol{x}}) = 0$. 
\end{enumerate}
We note that $\cM_{C^{\circ}}$ is not necessarily saturated. Figure $1$ in \cite{ACGSPunc} provides a nice example. 
A \textit{punctured log map to} $X\rightarrow B$ over $W\rightarrow B$ is a punctured log curve $(C^{\circ}\rightarrow C \rightarrow W, \bp)$ and a morphism $f:C^{\circ} \rightarrow X$ over $B$. It is \textit{stable} if $\cM_{C^{\circ}}$ is generated by $\cM_C$ and $f^{\flat}(f^*(\cM_X))$ and the underlying map $\ul{f}$ is stable in the usual sense.

The \textit{contact orders} of a punctured map over a log point $W=\spec(Q\rightarrow \kk)$ at point $p\in\bp$ is the composition 
\begin{equation*}
u_p: \ol{\cM}_{X,\ul{f}(p)} \xrightarrow{f^{\flat}} \ol{\cM}_{C,p} \rightarrow Q\oplus \ZZ \xrightarrow{\pr_2} \ZZ.
\end{equation*} The contact order is \textit{negative} if the image of $u_p$ is not contained in $\NN$, which naturally occurs over the points $p$ with $\cM_{C,p}$ a strict submonoid of $\cM_{C^{\circ},p}$. 

{\color{black}
Via the functoriality of the tropicalization functor, a stable punctured log maps gives rise to a family of tropical punctured maps (\cite[\S 2.2.1]{ACGSPunc}), where we extract the combonatorical data of \textit{global types}. As in the theory of logarithmic Gromov-Witten, the moduli spaces of the stable punctured log maps to $X$ are stratified by global types.}

\begin{definition}\cite[Def. 3.4]{ACGSPunc} \label{globaltypedefinition}
A \textit{global type $\tau$} of a family of tropical punctured maps is a tuple $(G,\bg, \ol{\bu}, \pmb{\sigma})$ consisting of 
\begin{enumerate}
    \item A connected graph $G$ with a set of vertices $V(G)$, a set of edges $E(G)$ and a set of legs $L(G)$.
    \item A genus map $\bg: V(G)\rightarrow \NN$.
    \item An image cone map $\bsigma: V(G)\cup E(G) \cup L(G)\rightarrow \Sigma(X)$.
    \item A global contact order map $\ol{\bu}$ 
    \begin{equation*}
        \ol{\bu}: E(G)\cup L(G) \rightarrow \bigsqcup_{\sigma\in \Sigma(X)} \fC_{\sigma}(X)
    \end{equation*}
    such that $\ol{\bu}(x) \in \fC_{\bsigma(x)}(X)$, with $\bsigma(x)$ the image cone of any edge or leg $x$. Here, for any cone $\sigma \in \Sigma(X)$, we define 
    \begin{equation*}
        \fC_{\sigma}(X) : = \operatorname{colim}_{y\in V_X({\sigma})}^{\operatorname{\pmb{Sets}}}{N_{\sigma_y}}.
    \end{equation*}
\end{enumerate}
for a point $y\in X$. By the cone $\sigma_y$ we mean the dual cone $\ol{\cM}^{\vee}_{X,y}$.

A \textit{global decorated type} $\btau$ is a tuple $(\tau, \bA)$ with $\tau$ a global type and $\bA$ a function from $V(G)$ to a monoid of curve classes of $X$. We say a global type $\tau$ or a global decorated type $\btau = (\tau, \bA)$ is realizable if there exists a tropical map to $\Sigma(X)$ with associated global type $\tau$.
\end{definition}

A \textit{marking by $\btau$} of a punctured map $(C^{\circ}/W,\bp,f)$ is defined in \cite[Def.3.7]{ACGSPunc}. Roughly speaking, a map is marked by $\btau$ if the genus decorated dual graph of the curve $C$ admits a contraction to $(G_{\btau},\bg_{\btau})$, the image of each nodes and punctured points lies in the associated logarithmic strata of the cone $\bsigma$, both the contact orders of non-contracted edges and legs and the curve classes after contraction are determined by $\btau$. The following theorem in \cite{ACGSPunc} lays the foundation of the punctured Gromov-Witten theory. 

\begin{theorem}\cite[Thm A]{ACGSPunc}
Let $\btau$ be a global decorated type. Then the moduli space $\scrM(X/B,\btau)$ of $\btau$-marked basic stable punctured maps to $X\rightarrow B$ is a Deligne-Mumford logarithmic algebraic stack and is proper over $B$.  
\end{theorem}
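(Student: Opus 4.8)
The statement to prove is the theorem from \cite{ACGSPunc} (Theorem A): that $\scrM(X/B,\btau)$ is a Deligne--Mumford logarithmic algebraic stack, proper over $B$. Since this is quoted as a prior result, the "proof" here is really a proof sketch of how such a statement is established in the punctured setting, following the general strategy that worked for log stable maps in \cite{GSLogGromovWitten,abramovich2014,QileChen2014Slmt}.

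\medskip

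\textbf{Proof sketch.}
The plan is to follow the now-standard three-step strategy used for ordinary logarithmic stable maps, adapted to the punctured setting. First I would establish \emph{representability and algebraicity}. The key input is that punctured log maps of a fixed global decorated type $\btau$ have a well-behaved deformation theory relative to the stack of punctured log curves $\mathfrak{M}^\circ$ and the relative Artin fan / log stack $\mathcal{A}_X$ of $X$ over $B$. Concretely, one shows that $\scrM(X/B,\btau)$ fits into a diagram over the stack of punctured curves together with a map to (a suitable twist of) $\mathcal{A}_X$, and that forgetting the log structure and passing to the underlying stacks, the underlying stack $\underline{\scrM}$ is of finite type and Deligne--Mumford because the curve classes $\beta = \bA$ are fixed, genus is bounded, and the number of marked/punctured points is fixed; the log-enhancement and basicness then cut out a closed (or locally closed, stratified) substack. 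The basic monoid construction of Gross--Siebert produces a universal (minimal) log structure, so the moduli problem in the category of fs log stacks is representable by the algebraic stack carrying this basic log structure.

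\medskip

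Second, \emph{boundedness}: one must bound the combinatorial data (vertices, edges, legs of $G$, contact orders $\ol{\bu}$, cone assignments $\bsigma$) compatible with a fixed $\btau$ and fixed $\beta$. Here the global type $\btau$ itself already fixes most discrete data; the moduli space marked by $\btau$ is, by definition, the closure of the type-$\btau$ locus, so the relevant boundedness is that only finitely many \emph{contraction types} $\bomega \to \btau$ occur, which follows because $G_\omega$ contracts to $G_\tau$ and the tropical data are constrained by the fixed cone data in $\Sigma(X)$ and the fixed curve class. Stability of the punctured maps (in the sense of finite automorphisms) gives the Deligne--Mumford property: automorphism groups are finite because a nontrivial automorphism would have to act on a stable pointed curve fixing the map, hence trivially on each component with enough special points, and the log/puncturing structure adds no new infinitesimal automorphisms.

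\medskip

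Third, \emph{properness over $B$}, proved via the valuative criterion. Given a DVR $R$ with fraction field $K$ and a punctured stable map over $\Spec K$, one must extend it (after finite base change) over $\Spec R$. This is the most delicate step and the main obstacle: unlike ordinary stable maps, the limit may require \emph{puncturing} — the naive log limit need not be a legitimate log map because some sections of $f^\flat$ can fail to land in the right monoid, and one must enlarge by allowing the puncturing monoid $\mathcal{P}$ as in \cite[Def.\ 2.1]{ACGSPunc}. The argument proceeds by: (i) extending the underlying stable map using properness of Kontsevich space, possibly after semistable reduction to control the special fiber's dual graph; (ii) extending the log structure on the curve using log smoothness/the theory of log curves; (iii) extending $f^\flat$ on ghost sheaves — this is a combinatorial/tropical problem of extending a tropical map, solved by choosing the limit tropical map in the associated basic cone and then checking that the resulting monoid homomorphism, after puncturing, is well-defined and defines a basic punctured map; (iv) uniqueness of the limit, which follows from uniqueness of the basic (minimal) log structure once the underlying data and tropical limit are pinned down. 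Separatedness follows from the uniqueness part of the valuative criterion by the same basicness argument. I expect step (iii)–(iv) — controlling exactly when and how puncturing is forced, and showing the limit is unique in the fs category — to be where essentially all the work lies, and it is exactly the content that \cite{ACGSPunc} develops in detail; here I would simply cite \cite[\S 2--3]{ACGSPunc}.
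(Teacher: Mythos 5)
The paper does not prove this statement: it is quoted verbatim as Theorem A of \cite{ACGSPunc}, and no argument is offered here. There is therefore no ``paper's own proof'' to compare against.

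Your sketch is a fair outline of the strategy used in the cited reference. You correctly identify the three-part structure (algebraicity/representability via the basic log structure, boundedness of the relevant tropical combinatorics, properness via the valuative criterion), and you correctly locate the genuinely new difficulty relative to the non-punctured case: the valuative extension step may force additional puncturing, and uniqueness of the limit in the fs category must then be re-established. Deferring these details to \cite[\S 2--3]{ACGSPunc} is exactly the right move for a result that the present paper treats as imported background. Two minor remarks for precision. First, the Deligne--Mumford property is not quite as immediate as ``finite automorphisms'': one also needs quasi-separatedness and an unramified diagonal, which in the punctured setting is where the careful treatment of the moduli of punctured curves and of puncture-admissibility enters; a full proof would have to address this rather than only the automorphism count. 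Second, in \cite{ACGSPunc} the algebraicity argument is organized around a strict (or idealized log smooth) morphism to the moduli of punctured maps to the relative Artin fan $\fM(\X/B,\btau)$, and the obstruction theory is taken relative to that stack, not directly relative to the stack of punctured curves; your phrasing is close but slightly compresses that intermediate layer. Neither of these affects the overall correctness of the sketch.
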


The insights of Olsson's category of logarithmic schemes \cite{OlssonMartinC2003Lgaa} lead to the concept of Artin fans. As defined in \cite[\S 2.2]{abramovich2020decomposition}, for a log Deligne-Mumford stack $X$, the Artin fan of $X$ is the algebraic stack constructed by gluing toric quotient stacks, called Artin cones, of stalks of $\ol{\cM}_X$. Let $\ol{x}$ be a geometric point on $X$ and let $P_{\ol{x}}$ be $\ol{\cM}_{X,\ol{x}}$. We define an Artin cone $\cA_{\ol{x}} = [\spec \kk[P_{\ol{x}}]/\spec \kk[P_{\ol{x}}^{\gp}]]$. The generization of points results in open embeddings of Artin cones. The Artin fan $\cA_X$ is the colimit of Artin cones along all points. Artin fans play an important role in the virtual theory and connect the tropical picture with the log picture.

Let $\X = \A_X \times_{\A_B} B$ be the relative Artin fan. The moduli space $\fM(\X/B,\btau)$ of $\btau$-marked basic stable punctured maps to $\X\rightarrow B$ is again an algebraic stack. For $\btau$ realizable, the moduli space $\fM(\X/B,\btau)$ is pure dimensional(\cite[Prop.3.28]{ACGSPunc}).

There is a natural evaluation map
\begin{equation*}
\fM(\X/B,\btau) \rightarrow \ul{\X}\times_{\ul{B}}\ldots\times_{\ul{B}} \ul{\X},
\end{equation*}
taken over all the edges and legs of type $\btau$. We define
\begin{equation} \label{fmevdefinition}
    \fM^{\ev}(\X/B,\btau) = \fM(\X/B,\btau) \times_{(\ul{\X}\times_{\ul{B}}\ldots\times_{\ul{B}} \ul{\X})} (\ul{X}\times_{\ul{B}}\ldots\times_{\ul{B}} \ul{X}).
\end{equation} 

Let $\bS$ be a subset of edges of the graph $G$ of $\btau$. By splitting $G$ along the edges in $\bS$, we obtain a collection of types $\btau_i$, $i=1,\ldots,r$. As shown by the following theorem, the virtual theory of the splitting morphism of the moduli spaces of punctured maps to $X\rightarrow B$ is compatible with the splitting morphism of the moduli spaces of punctured maps to the relative Artin fans $\X\rightarrow B$.

\begin{theorem} \cite[Thm C, Prop 5.15, Thm 5.17]{ACGSPunc} \label{setuptheorem}

There is a Cartesian diagram
 \begin{equation} \label{setupdiagram}
 \begin{tikzcd}
     \scrM(X/B,\btau) \arrow[r, "\delta"] \arrow[d,"\hat{\varepsilon}"] & \prod_{i=1}^r \scrM(X/B,\btau_i) \arrow[d,"\varepsilon"] \\
     \fM^{\ev}(\X/B,\btau) \arrow[r,"\delta'"] & \prod_{i=1}^r \fM^{\ev}(\X/B,\btau_i),
\end{tikzcd}
\end{equation}
with horizontal splitting maps finite and representable, and vertical maps strict morphisms.  There are obstruction theories 
\begin{equation*}
\begin{split}
    \GG & \rightarrow \LL_{\scrM(X/B,\btau)/\fM^{\ev}(\X/B,\btau)} \\
    \GG_{\spl{}} & \rightarrow \LL_{\prod_{i=1}^r \scrM(X/B,\btau_i)/\prod_{i=1}^r \fM^{\ev}(\X/B,\btau_i)},
\end{split}
\end{equation*}
such that the obstruction theory of the left vertical map is the pullback of the obstruction theory of the right vertical map. For $\alpha \in A_*(\fM^{\ev}(\X/B,\btau))$, there is
\begin{equation*}
    \delta_*\hat{\varepsilon}^{!}(\alpha) = \varepsilon^!\delta_*'(\alpha),
\end{equation*}
where $\hat{\varepsilon}^!$ and $\varepsilon^!$ are the Manolache's virtual pullback defined using these two obstruction theories.
\end{theorem}

\subsection{Logarithmic evaluation maps} \label{logevmapsection}
Different from Jun Li's situation using expanded degenerations, the gluing of a logarithmic stable map from the restrictions to closed subcurves requires more than gluing on the schematic level. In order to obtain a gluing formalism, we first need to fix the problem of the non-existence a logarithmic evaluation map from the moduli space $\fM(\X/B,\btau)$ to $\X$. It requires us to do a modification of the log structure on the moduli space. 

For ease of notation, we use $\fM_{\btau}: = \fM(\X/B,\btau)$ and $\MF_{\btau}^{\ev}: = \MF^{\ev}(\X/B,\btau)$ for the rest of the paper. Let $G$ be the graph associated to $\btau$. For each element $p \in E(G) \cup L(G)$, let $\ul{s}_p: \ul{\fM}_{\btau} \rightarrow \ul{\fC^{\circ}}$ be the universal section of the punctured or nodal point associated to $p$. Define $\widetilde{\MF}_p$ to be the logarithmic algebraic stack with the underlying stack $\underline{\MF}_{\btau}$ and the log structure $\ul{s}_p^*\M_{\mathfrak{C}^{\circ}}$. With this log structure, there is a canonical evaluation map $\wt{\fM}_p\rightarrow \X$ on the section of $p$. Note that the log structure on $\wt{\fM}_p$ is fine, but may not be saturated.

For a subset $\bS \subseteq E(G)\cup L(G)$, let $\widetilde{\MF}_{\bS, \btau}$ be the saturation of the fine fiber product
\begin{equation} \label{Mtildedefinition}
    \widetilde{\MF}_{p_1} 
    \times^{\operatorname{fine}}_{\MF_{\btau}} \ldots\times^{\operatorname{fine}}_{\MF_{\btau}}\widetilde{\MF}_{p_{|\bS|}}, \quad p_i\in \bS
\end{equation}
in the category of fine log stacks. For the rest of the section, we fix a subset $\bS$ and use $\wt{\fM}_{\btau}$ for $\wt{\fM}_{\bS,\btau}$. Define $\wt{\fM}^{\ev}_{\btau} = \widetilde{\MF}_{\btau} \times_{\fM_{\btau}} \fM_{\btau}^{\ev}$.

\begin{proposition} \label{isoafterenhanced}
\cite[Prop.5.5]{ACGSPunc}
The canonical map $\wt{\fM}_{\btau}^{\ev} \rightarrow \fM^{\ev}_{\btau}$ is an isomorphism on the underlying stacks provided $\bS \subseteq E(G)$, and generally induces an isomorphism on the reductions.
\end{proposition}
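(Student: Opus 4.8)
The plan is to analyze the modification of log structures described in \eqref{Mtildedefinition} directly on the level of characteristic monoids (ghost sheaves), using the fact that all the relevant log structures are pulled back from the curve $\mathfrak{C}^\circ$. The key observation is that for a nodal or punctured point $p$, the universal section $\ul{s}_p^* \M_{\mathfrak{C}^\circ}$ contributes to $\wt{\fM}_p$ a generator whose relation to the existing log structure $\M_{\fM_\tau}$ depends on whether $p$ is an edge (node) or a leg (puncture). First I would recall from \cite{ACGSPunc} the local description of $\ol{\cM}_{\mathfrak{C}^\circ}$ at a node $p$: étale-locally the ghost sheaf is generated over $\ol{\cM}_{\fM_\tau}$ by two elements with the single relation $\ell_p \mapsto$ (the smoothing parameter $\rho_p \in \ol{\cM}_{\fM_\tau}$), while at a puncture one gets the "punctured" monoid $P^\circ$ which is only a \emph{fine} (not saturated, not even a monoid in the naive sense — a submonoid of a group containing $P$) enlargement. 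So $\wt{\fM}_p \to \fM_\tau$ is strict away from $p$ and adds exactly this local data at $p$.

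Second, I would treat the two cases separately. For $\bS \subseteq E(G)$: at each node $p \in \bS$, forming the fine fiber product $\widetilde{\MF}_p \times^{\fine}_{\fM_\tau} \cdots$ and then saturating replaces the local ghost monoid by the saturation of the pushout. The crucial point is that in the edge case the added relation $\ell_p \mapsto \rho_p$ already holds in $\ol{\cM}_{\fM_\tau}$ with $\rho_p$ a genuine element (the node-smoothing coordinate is part of the deformation-theoretic log structure on the moduli stack itself), so the pushout in the category of \emph{integral} monoids is already saturated and the map $\ol{\cM}_{\fM_\tau} \to \ol{\cM}_{\wt{\fM}_\tau}$ is \emph{an isomorphism on characteristic sheaves}. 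By \cite{OlssonMartinC2003Lgaa} a morphism of fine saturated log stacks that is strict and an isomorphism on underlying stacks (which here it visibly is, since we only changed the log structure, not $\ul{\fM}_\tau$) is an isomorphism; hence $\wt{\fM}^{\ev}_\tau \to \fM^{\ev}_\tau$ is an isomorphism. For the general case with $\bS$ containing legs: at a puncture the pushout monoid is fine but its saturation strictly enlarges it, and taking the underlying stack of the saturation of a non-saturated fine log structure is exactly the operation that can introduce nilpotents; passing to the reduction kills precisely this discrepancy, so $(\wt{\fM}^{\ev}_\tau)_{\red} \to (\fM^{\ev}_\tau)_{\red}$ becomes an isomorphism. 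I would make this last step precise by citing the relevant lemma of \cite{ACGSPunc} (the one asserting that saturating a fine log structure changes the underlying scheme only up to reduction — this is essentially the statement that $\ul{X^{\sat}}_{\red} = \ul{X}_{\red}$ for $X$ fine) and combining it with the edge-case isomorphism applied to $\bS \cap E(G)$.

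The main obstacle I anticipate is controlling the \emph{interaction} between several points of $\bS$ simultaneously — the iterated fine fiber product in \eqref{Mtildedefinition} could, a priori, create relations among the $\ell_{p_i}$ that are invisible point-by-point, especially when two punctures lie on the same component or when a chain of nodes links them (the "tropical matching" phenomena alluded to in the text). Handling this requires knowing that the universal sections $\ul{s}_{p_i}$ for distinct $p_i$ hit \emph{distinct} marked/nodal sections of the universal curve, so the pullback log structures $\ul{s}_{p_i}^*\M_{\mathfrak{C}^\circ}$ have "disjoint support" in $\ol{\cM}_{\mathfrak{C}^\circ}$ modulo $\ol{\cM}_{\fM_\tau}$; granting this (which follows from the structure of the universal punctured curve in \cite[\S2]{ACGSPunc}), the pushout decomposes as a sum of the individual contributions and the point-by-point analysis suffices. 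A secondary technical point is checking that $\fM^{\ev}_\tau$, being defined by the strict base change \eqref{fmevdefinition} along $\ul{X} \to \ul{\X}$, does not interfere with any of this — but since that base change is strict and only touches the underlying stacks, it commutes with all the log-structure modifications above, so the proposition for $\fM^{\ev}_\tau$ reduces formally to the corresponding statement for $\fM_\tau$ together with $\wt{\fM}_\tau$.
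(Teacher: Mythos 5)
The paper does not actually prove Proposition~\ref{isoafterenhanced}; it cites it verbatim as \cite[Prop.5.5]{ACGSPunc}, so there is no in-paper proof to compare against. Evaluating your argument on its own terms, there are several concrete problems.

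The central claim you make for the edge case --- that the map $\ol{\cM}_{\fM_{\btau}} \to \ol{\cM}_{\wt{\fM}_{\btau}}$ is an isomorphism, i.e.\ that $\wt{\fM}_{\btau} \to \fM_{\btau}$ is strict --- is false, and contradicts Lemma~\ref{evaluationconetype} of this very paper. There the stalk of $\ol{\cM}_{\wt{\fM}_{\btau}}$ at a point of type $\bomega$ has dual cone $\wt{\bomega}_{\bS}$, which carries genuinely new coordinates $(l_p)_{p \in \bS}$ not present in the basic cone $\ol{\bomega} = (\ol{\cM}_{\fM_{\btau},\ol{w}})^{\vee}_{\RR}$. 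The log structure on $\wt{\fM}_{\btau}$ really is larger; there is no strictness to exploit. This undercuts the appeal to \cite{OlssonMartinC2003Lgaa}, which was the sole mechanism you offered for concluding the edge case.

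You also treat it as "visible" that the underlying stack is unchanged, on the grounds that "we only changed the log structure, not $\ul{\fM}_{\btau}$." This is exactly the point at issue. The fine fiber product in \eqref{Mtildedefinition} is formed by integralizing a pushout of sheaves of monoids, and the passage to the saturation is yet another operation; both can cut down or spread out the underlying stack. The correct route here is to argue that, for $p$ a node, the map $Q_{\bomega} \to Q_{\bomega,p} = Q_{\bomega} \oplus_{\NN} \NN^2$ (pushout of the diagonal $\NN \to \NN^2$ along $1 \mapsto \rho_p$) is an integral and saturated morphism of fine monoids, since integrality and saturatedness are stable under cobase change and $\NN \to \NN^2$ has both properties. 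That is what guarantees the fine, saturated fiber product of \eqref{Mtildedefinition} has the same underlying stack as the naive fiber product of underlying stacks, which is $\ul{\fM}_{\btau}$. You gesture at this ("the pushout in the category of integral monoids is already saturated") but then derive the wrong consequence (strictness) rather than the relevant one (preservation of the underlying stack).

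Finally, the cited lemma for the leg case --- that $\ul{X^{\sat}}_{\red} = \ul{X}_{\red}$ for any fine $X$ --- is not a true general statement: saturation of a fine log scheme can be a non-trivial finite cover, not merely a thickening, so it can change the reduction (e.g.\ it can create extra irreducible components). The assertion in the proposition that only the reduction changes in the punctured case is therefore a genuine result about this particular moduli problem, and needs a dedicated argument (which in \cite{ACGSPunc} is tied to the structure of the punctured monoids and the idealized log smoothness of $\fM_{\btau}$), not a black-box fact about saturation. Your high-level split into nodes versus punctures is the right instinct, but each of the two steps as written is either false or unsubstantiated.
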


There is a \textit{canonical idealized structure} on $\fM_{\btau}$, such that $\fM_{\btau}$ is idealized log smooth (\cite[Thm 3.24]{ACGSPunc}). The idealized structure in \cite[Def 3.22]{ACGSPunc} comes from the fixed combinatorial conditions including dual graph $G$, image strata fixed by $\bsigma$, the contraction to the global type $\btau$ and the puncturing ideal. We will construct an \textit{evaluation idealized structure} on $\wt{\fM}_{\btau}$ following Construction \ref{constructionidealizedconstruction}, with which $\wt{\fM}_{\btau}$ is also idealized log smooth. Similar to the log structures on $\fM_{\btau}$, both log structures and evaluation idealized structure on $\wt{\fM}_{\btau}$ are determined by the global type $\btau$. 

For a basic punctured log map of type $\btau$ over a point $w$, the dual cone of the stalk $(\ol{\cM}_{\fM_{\btau},\ol{w}})^{\vee}_{\RR}$, is called \textit{associated basic cone} of $\btau$. The associated basic cone parametrizes the tropical maps of type $\btau$, which we describe concretely in the following Definition \ref{evaluationcone}. Similarly, the dual cone of the stalk $(\ol{\cM}_{\wt{\fM}_{\btau},\ol{w}})^{\vee}_{\RR}$ also admits a simple description by \textit{associated evaluation cone} of $\btau$, which parametrizes the tropical maps of type $\btau$ with an additional marking on each edge or leg in $\bS$.

\begin{definition} \label{evaluationcone}
Let $\btau$ be a realizable global decorated type. Define the \textit{associated basic cone $\ol{\btau}$ of $\btau$} the set of elements \begin{equation*}
\begin{split}
    ((V_v)_{v\in V(G)}, (l_E)_{E\in E(G)}) \in  \prod_{v\in V(G)} \bsigma(v) \times \prod_{E\in E(G)} \R_{\geq 0},
\end{split}
\end{equation*}
such that $V_{v_E}-V_{v_E'} = l_E \cdot \ol{\bu}(E)$. Here $v_E$ and $v_E'$ are the vertices of the edge $E$, with order specified by $\ol{\bu}(E)$. As $V_{v_E}$ and $V_{v_E'}$ both lie in $\bsigma(E)$, the difference $V_{v_E}-V_{v'_E}$ is well-defined.

Define the \textit{asscociated evaluation cone} $\wt{\btau}_{\bS}$ of $\btau$ with respect to a set $\bS\subseteq E(G)\cup L(G)$ to be the set of elements
\begin{equation*}
\begin{split}
    ((V_v)_{v\in V(G)}, (l_E)_{E\in E(G)},(t_p)_{p\in \bS}) \in  \prod_{v\in V(G)} \bsigma(v) \times \prod_{E\in E(G)} \R_{\geq 0} \times \prod_{p\in \bS} \R_{\geq 0},
\end{split}
\end{equation*}
such that $V_{v_E}-V_{v_E'} = l_E \cdot \ol{\bu}(E)$, $V_{v_p} + t_p \cdot \ol{\bu}(p) \in \bsigma(p)$ and $t_e\leq l_e$ for $e$ in $E(G)\cap \bS$. Here, if $p\in L(G)$, we define the vertex $v_p$ to be the vertex of leg $p$; if $p$ is an edge $E\in E(G)$, we define the vertex $v_p$ to be the vertex $v_{E}'$ with $V_{v_E}-V_{v_E'} = l_E \cdot \ol{\bu}(E)$, specified by the orientation of the contact order. There is a \textit{tropical evaluation map} 
\begin{equation*}
\begin{split}
\evt_{\btau} : \wt{\btau}_{\bS}  \rightarrow \prod_{p\in \bS}\bsigma(p) & ,\\
((V_v)_{v\in V(G)}, (l_E)_{E\in E(G)}, (t_p)_{p\in \bS}) & \mapsto (V_{v_p}+t_p\cdot \ol{\bu}(p))_{p\in \bS}.
\end{split}
\end{equation*}
\end{definition}

Under the case of $B = \spec (Q_B\rightarrow \kk)$, the tropical evaluation map $\evt_{\btau}$ factors through the fiber product of cones $\bsigma(p)$ over $Q_B^{\vee}$. The map
\begin{equation} \label{evdiscussion}
    \ev_{\btau}: \wt{\btau}_{\bS} \rightarrow \bsigma(p_1)\times_{Q_B^{\vee}}\ldots\times_{Q_B^{\vee}}\bsigma(p_{|\bS|})
\end{equation}
that $\evt_{\btau}$ factors through is later used in Lemma \ref{pushforwardforAglev}. 

\begin{lemma} \label{evaluationconetype} 
Let $\btau$ be the tropical type of the punctured map over a geometric point $\ol{w}$ on $\wt{\fM}_{\btau}$. Then, there is an isomorphism between the dual cone $(\ol{\cM}_{\wt{\fM}_{\btau},\ol{w}}^{\vee})_{\RR}$ and $\wt{\btau}_{\bS}$.
\end{lemma}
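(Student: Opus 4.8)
The plan is to identify the dual of the characteristic monoid stalk $\ol{\cM}_{\wt{\fM}_{\btau},\ol{w}}$ directly from the construction of $\wt{\fM}_{\btau}$ as an iterated fine-saturated fiber product, and to match it term by term with the combinatorial description of $\wt{\btau}_{\bS}$. Recall from \cite[Thm 3.22]{ACGSPunc} (and the discussion preceding Definition \ref{evaluationcone}) that the dual of $\ol{\cM}_{\fM_{\btau},\ol{w}}$ is the associated basic cone $\ol{\btau}$, parametrizing tropical maps of type $\btau$ via the data $((V_v)_{v}, (l_E)_E)$ subject to the edge relations $V_{v_E}-V_{v_E'} = l_E\cdot \ol{\bu}(E)$. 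So the real content is understanding how attaching, for each $p\in\bS$, the log structure $\ul{s}_p^*\cM_{\fC^\circ}$ modifies this cone, and how the fine-saturated fiber product over $\fM_{\btau}$ assembles these modifications.

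First I would analyze a single factor $\wt{\fM}_p$. The universal section $\ul{s}_p\colon \ul{\fM}_{\btau}\to \ul{\fC^\circ}$ pulls back the log structure of the universal target of punctured points; on tropicalizations this corresponds to recording the position of the marked point on the edge/leg $p$. Concretely, the stalk of $\ol{\cM}_{\fC^\circ}$ along the section $p$ has dual cone equal to $\bsigma(p)$ together with a parameter $l_p\in\R_{\ge 0}$ measuring how far along $p$ the point sits, and the compatibility with the curve log structure forces $V_{v_p}+l_p\cdot\ol{\bu}(p)\in\bsigma(p)$ — this is exactly the extra inequality appearing in $\wt{\btau}_{\bS}$. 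Thus $(\ol{\cM}_{\wt{\fM}_p,\ol{w}}^\vee)_\RR$ is $\ol{\btau}$ with the single added coordinate $l_p$ and the single added constraint $V_{v_p}+l_p\cdot\ol{\bu}(p)\in\bsigma(p)$. Second, I would invoke the description of characteristic monoids of fine-saturated fiber products: since all $\wt{\fM}_p\to\fM_{\btau}$ are strict on underlying stacks (they only change the log structure) with a common "base" $\fM_{\btau}$, the characteristic monoid of \eqref{Mtildedefinition} — before saturation — is the pushout $\ol{\cM}_{\wt{\fM}_{p_1},\ol{w}}\oplus_{\ol{\cM}_{\fM_{\btau},\ol{w}}}\cdots\oplus_{\ol{\cM}_{\fM_{\btau},\ol{w}}}\ol{\cM}_{\wt{\fM}_{p_{|\bS|}},\ol{w}}$, so dually $(\ol{\cM}_{\wt{\fM}_{\btau}}^\vee)_\RR$ is the fiber product over $(\ol{\btau})$ of the individual modified cones, which is precisely $\ol{\btau}$ with all coordinates $(l_p)_{p\in\bS}$ adjoined and all constraints $V_{v_p}+l_p\cdot\ol{\bu}(p)\in\bsigma(p)$ imposed simultaneously — i.e. $\wt{\btau}_{\bS}$. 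Over $\R$ (rational polyhedral cones), saturation does not change the cone, so taking the fs rather than fine fiber product is harmless at the level of the real cone; this handles the caveat that $\wt{\fM}_p$ need not be saturated.

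The main obstacle I anticipate is the precise bookkeeping of the orientation conventions and of the identification of the stalk of $\ol{\cM}_{\fC^\circ}$ along the universal section $\ul{s}_p$ — in particular, getting right which vertex $v_p$ of an edge $p\in E(G)$ plays the role of the "basepoint" from which $l_p$ is measured (the definition singles out $v_E'$ via the orientation of $\ol{\bu}(E)$), and checking that for a puncture the relevant inequality is $V_{v_p}+l_p\cdot\ol{\bu}(p)\in\bsigma(p)$ rather than an equality. This is where one must carefully unwind the local structure of punctured curves from \cite[\S 2]{ACGSPunc} and the behavior of $\ol{\cM}_{\fC^\circ}$ near punctured versus nodal sections. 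Once the single-factor computation is pinned down, the fiber-product step is formal (functoriality of $\operatorname{Hom}(-,\R_{\ge0})$ turning monoid pushouts into cone fiber products), and comparing the resulting set of $(V_v,l_E,l_p)$ with Definition \ref{evaluationcone} gives the asserted isomorphism, visibly compatible with the edge relations inherited from $\ol{\btau}$.
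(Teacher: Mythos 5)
Your proposal is correct and follows essentially the same two-step argument as the paper: first identify the dual cone of each single factor $\wt{\fM}_p$ with the basic cone $\ol{\btau}$ augmented by the parameter $l_p$ and the constraint $V_{v_p}+l_p\cdot\ol{\bu}(p)\in\bsigma(p)$, then observe that the characteristic monoid of the fs fiber product is the saturated pushout over $Q=\ol{\cM}_{\fM_{\btau},\ol{w}}$, so dually the real cone is the fiber product of the modified cones over $\ol{\btau}$. Your remark that saturation is harmless on the level of real cones is exactly the observation the paper uses implicitly in passing from $\wt{Q}$ to $\wt{Q}_{\RR}^{\vee}$.
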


\begin{proof}
By the definition of $\wt{\fM}_{\btau}$ in \eqref{Mtildedefinition}, there is a projection from $\wt{\fM}_{\btau} \rightarrow \wt{\fM}_p$, for every $p\in \bS$. Let $\ol{w}_p$ be the geometric point in $\wt{\fM}_p$ under the projection. Let $\wt{Q}_p$ be the monoid $\ol{\cM}_{\wt{\fM}_p,\ol{w}_p}$. From the tropical interpretation of the basic log structure in \cite[§2.2]{ACGSPunc}, for $p\in \bS$ and the associated punctured or nodal point $\ul{s}_p:\ul{\fM}_{\btau} \rightarrow \ul{\fC}^{\circ}$, there are isomorphisms between $\wt{Q}^{\vee}_{p,\RR} = \ul{s}_p^*(\ol{\cM}_{{\fC}^{\circ},\ul{s}_p(\ol{w})})$ and the cone
\begin{equation*}
    ((V_v)_{v\in V(G)}, (l_E)_{E\in E(G)},t_p) \in  \prod_{v\in V(G)} \bsigma(v) \times \prod_{E\in E(G)} \R_{\geq 0} \times \R_{\geq 0}
\end{equation*}
with $V_{v_E}-V_{v_E'} = l_E \cdot \ol{\bu}(E)$, $V_{v_p} + t_p \cdot \ol{\bu}(p) \in \bsigma(p)$ and $t_p\leq l_p$ if $p\in E(G)$.

As $\wt{Q}$ is the saturation of $\wt{Q}_{p_1}\oplus_{Q}\ldots\oplus_Q \wt{Q}_{p_{|\bS|}}$, the dual cone $\wt{Q}_{\RR}^{\vee}$ is the fiber product of cones $\wt{Q}^{\vee}_{p_1,\RR}\times_{Q^{\vee}_{\RR}}\ldots\times_{Q^{\vee}_{\RR}} \wt{Q}^{\vee}_{p_{|\bS|,\RR}}$. Thus, there is an isomorphism of cones $\wt{Q}_{\RR}^{\vee}\rightarrow \wt{\btau}_{\bS}$.
\end{proof}

Now we construct the idealized structure on $\wt{\fM}_{\btau}$, with which $\wt{\fM}_{\btau}$ is idealized log smooth over $B$. It follows from the following general construction of an idealized log structure on a logarithmic stack $M$, {\color{black} assuming there is a strict closed embedding of $(M,\cM_M) \rightarrow (N,\cM_N)$ determined by a sheaf of log ideals of $\cM_N$ and there is an idealized log structure $\cK_N$ on $N$. The construction is the same as the log scheme case in \cite[Prop III.1.3.4]{LogAlgebraicGeometry}.
}
\begin{construction} \label{constructionidealizedconstruction}
Assume there is a strict closed embedding of $(M,\cM_M$) $\rightarrow (N,\cM_N)$, such that $M$ is the closed substack of $N$ determined by the ideal generated by $\alpha_N(\cK')$, with $\cK'$ a log ideal sheaf of $\cM_N$ and $\alpha_N$ the structure morphism. We construct an idealized structure $\cK_M$ to be the ideal sheaf of $\cM_{M}$ generated by the pullback of $\cK'$ and $\cK_N$. 
\end{construction}

It is easy to check that $\alpha_M(\cK_M) = 0$, thus $\cK_M$ is a well-defined idealized structure. The morphism $(M,\cM_M,\cK_M)\rightarrow (N,\cM_N,\cK_N)$ is idealized log smooth by \cite[Variant IV.3.1.21]{LogAlgebraicGeometry}. Note that by definition, a logarithmic stratum $M$ of $N$ is determined by a logarithmic ideal sheaf, hence satisfies the condition for the construction.

Let $\cK_{\fM_{\btau}}$ be the \textit{canonical idealized structure} on ${\fM}_{\btau}$ defined in \cite[Def.3.22]{ACGSPunc}. For $p\in \bS$, the map $e_p: \wt{\fM}_p\rightarrow \fM_{\btau}$ is the composition of the strict section map $s_p: \wt{\fM}_p \rightarrow \fC^{\circ}$ and the universal curve $\fC^{\circ} \rightarrow \fM_{\btau}$. The section map $\wt{\fM}_p \rightarrow \fC^{\circ}$ is a closed immersion of the logarithmic stratum of $\fC^{\circ}$ associated to the puncturing $p$. We define an idealized log structure $\cK_{\fC^{\circ}}$ on $\fC^{\circ}$ by the pullback of $\cK_{\fM_{\btau}}$ on $\fM_{\btau}$. Define $\cK_{\wt{\fM}_p}$ to be the canonical idealized structure on $\wt{\fM}_p$ associated to $s_p$ constructed in Construction \ref{constructionidealizedconstruction} and $\K_{\wt{\MF}_{\bS}}$ to be the sheaf of ideals generated by the pullbacks of ideals $\cK_{\wt{\fM}_p}$ under the projection maps $\wt{\fM}_{\btau} \rightarrow \wt{\MF}_p$.

\begin{proposition} \label{tildeevisidealizedlogsmooth}
The logarithmic algebraic stack $\widetilde{\MF}_{\btau}$ with log ideal $\K_{\widetilde{\MF}_{\btau}}$ is idealized log smooth over $B$.
\end{proposition}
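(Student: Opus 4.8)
The plan is to reduce the statement to the local model of Construction \ref{constructionidealizedconstruction}, i.e.\ to show that $\widetilde{\MF}_{\btau}$ with the ideal $\K_{\widetilde{\MF}_{\btau}}$ fits into a tower of strict closed embeddings into idealized log smooth stacks, each step of which is governed by a chart of the form $(M,\cM_M,\cK_M)\to(N,\cM_N,\cK_N)$ with $\cK_M$ built as in Construction \ref{constructionidealizedconstruction}. Concretely, I would first record that $\fM_{\btau}$ is idealized log smooth over $B$ with its canonical idealized structure $\cK_{\fM_{\btau}}$ by \cite[Thm 3.22]{ACGSPunc}, and that the universal curve $\fC^{\circ}\to\fM_{\btau}$ is idealized log smooth (log curves are log smooth, and the idealized structure on $\fC^{\circ}$ is just the pullback of $\cK_{\fM_{\btau}}$, so idealized log smoothness is preserved by \cite[IV.3.1.19]{LogAlgebraicGeometry} or the analogous base-change statement). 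Hence $\fC^{\circ}$ is idealized log smooth over $B$.

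Next I would handle a single $p\in\bS$. The section $s_p\colon \ul{\fM}_{\btau}\to \ul{\fC^{\circ}}$ is a strict closed embedding onto the image of the puncture/node, and $\wt{\fM}_p$ is by definition $\ul{\fM}_{\btau}$ equipped with $s_p^*\cM_{\fC^{\circ}}$ — i.e.\ it is exactly the strict closed embedding $\wt{\fM}_p\hookrightarrow\fC^{\circ}$. Applying Construction \ref{constructionidealizedconstruction} with $N=\fC^{\circ}$, $M=\wt{\fM}_p$, and $\cK_N=\cK_{\fC^{\circ}}$ produces $\cK_{\wt{\fM}_p}$, and the cited \cite[Variant IV.3.1.21]{LogAlgebraicGeometry} gives that $(\wt{\fM}_p,\cM_{\wt{\fM}_p},\cK_{\wt{\fM}_p})\to(\fC^{\circ},\cM_{\fC^{\circ}},\cK_{\fC^{\circ}})$ is idealized log smooth. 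Composing with $\fC^{\circ}\to B$ shows each $\wt{\fM}_p$ is idealized log smooth over $B$.

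For the general $\bS$, I would proceed by induction on $|\bS|$, the point being that $\widetilde{\MF}_{\btau}$ is the saturation of the iterated fine fiber product $\wt{\fM}_{p_1}\times^{\fine}_{\fM_{\btau}}\cdots\times^{\fine}_{\fM_{\btau}}\wt{\fM}_{p_{|\bS|}}$ over $\fM_{\btau}$. The key observation is that, at a geometric point $\ol w$, Lemma \ref{evaluationconetype} identifies the dual monoid cone with the evaluation cone $\wt{\btau}_{\bS}$, which is cut out inside $\prod_{v}\bsigma(v)\times\prod_E\RR_{\ge0}\times\prod_{p\in\bS}\RR_{\ge0}$ by the \emph{edge} equalities $V_{v_E}-V_{v_E'}=l_E\cdot\ol{\bu}(E)$ together with the \emph{puncture} conditions $V_{v_p}+l_p\cdot\ol{\bu}(p)\in\bsigma(p)$; the latter are exactly the relations that force the corresponding monoid elements into the idealized ideal. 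So I would show on charts that the pair $\bigl(\cM_{\widetilde{\MF}_{\btau}},\K_{\widetilde{\MF}_{\btau}}\bigr)$ is, \'etale-locally, the monoid algebra of $\wt{Q}=\bigl(\bigoplus_{p}\wt Q_p\bigr)^{\sat}_{/Q}$ with the ideal generated by the images of the $\cK_{\wt{\fM}_p}$, which by the fiber-product description of $\wt Q^\vee_{\RR}$ in the proof of Lemma \ref{evaluationconetype} is precisely the idealized chart one obtains by iterating Construction \ref{constructionidealizedconstruction} along the strict closed embeddings $\widetilde{\MF}_{\btau}\hookrightarrow \wt\fM_{p_1}\times^{\fine}_{\fM_{\btau}}\cdots$ — here one uses that saturation of a fine idealized log smooth stack is idealized log smooth (smoothness is detected after saturation since we work over a field of characteristic $0$, cf.\ the conventions and the fs-reduction statements used later, e.g.\ Proposition \ref{gluedmodulireducedsetup}). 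Assembling: $\widetilde{\MF}_{\btau}\to B$ factors through a chain of idealized log smooth morphisms (each fiber-product/saturation step preserves idealized log smoothness over $B$, being built from Construction \ref{constructionidealizedconstruction}), hence $\widetilde{\MF}_{\btau}$ with $\K_{\widetilde{\MF}_{\btau}}$ is idealized log smooth over $B$.

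The main obstacle I anticipate is the bookkeeping at the last step: verifying that the ideal $\K_{\widetilde{\MF}_{\btau}}$ generated by the pulled-back ideals $\cK_{\wt{\fM}_p}$ agrees, after the fine fiber product and saturation, with the ideal that Construction \ref{constructionidealizedconstruction} would attach directly to the composite strict closed embedding — i.e.\ that ``generated by pullbacks of the $\cK_{\wt{\fM}_p}$'' is not strictly smaller than the ideal one needs for idealized log smoothness. Concretely one must check that on monoid charts the saturation operation does not introduce new elements of $\cO=0$-type outside the ideal generated by the $\cK_{\wt\fM_p}$, which is where the explicit cone description $\wt Q^\vee_{\RR}=\wt Q^\vee_{p_1,\RR}\times_{Q^\vee_{\RR}}\cdots\times_{Q^\vee_{\RR}}\wt Q^\vee_{p_{|\bS|},\RR}$ from Lemma \ref{evaluationconetype} does the real work. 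Once that compatibility is in hand, idealized log smoothness over $B$ is formal from \cite[Variant IV.3.1.21]{LogAlgebraicGeometry} and stability of idealized log smoothness under composition.
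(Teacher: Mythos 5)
Your proof follows essentially the same route as the paper: establish idealized log smoothness of each $\wt{\fM}_p \to \fM_{\btau}$ via the strict closed embedding into the universal curve and Construction~\ref{constructionidealizedconstruction}, pass to the fine fiber product over $\fM_{\btau}$, and then saturate. Two points in your write-up, however, are off from what is actually needed. First, your justification for the saturation step --- ``saturation of a fine idealized log smooth stack is idealized log smooth (smoothness is detected after saturation since we work over a field of characteristic $0$)'' --- is not the right reason and would not go through as stated; the correct argument is that the saturation morphism $g\colon \wt{\fM}_{\btau}\to \wt{\fM}^{\operatorname{fine}}_{\btau}$ is log \'etale (cf.~\cite[\S III.3.1.11]{LogAlgebraicGeometry}) and, with $\K_{\wt{\fM}_{\btau}}$ generated by $g^*\cK_{\wt{\fM}^{\operatorname{fine}}_{\btau}}$, it is ideally strict, hence idealized log smooth; this has nothing to do with characteristic $0$. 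Second, the ``main obstacle'' you identify --- that $\K_{\wt{\fM}_{\btau}}$ should agree with the ideal Construction~\ref{constructionidealizedconstruction} attaches to some composite strict closed embedding --- is a red herring: $\wt{\fM}_{\btau}\to\fM_{\btau}$ is not a strict closed embedding, and $\K_{\wt{\fM}_{\btau}}$ is \emph{defined} as the ideal generated by the pullbacks of the $\cK_{\wt{\fM}_p}$, so there is no second ideal to match; one only needs to show that this definition yields idealized log smoothness, which the fine-fiber-product-then-saturation argument does directly without appealing to Lemma~\ref{evaluationconetype}.
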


\begin{proof}
With the idealized structure $\cK_{\fC^{\circ}}$ on $\fC^{\circ}$, the universal curve $ \fC^{\circ}\rightarrow \fM_{\btau}$ is ideally strict, that is, the idealized structure on $\fC^{\circ}$ is generated by the pullback of idealized structure on $\fM_{\btau}$. Since $\fC^{\circ}\rightarrow \fM_{\btau}$ is log smooth, it is idealized log smooth by \cite[Variant IV.3.1.22]{LogAlgebraicGeometry}.  By \cite[Variant IV.3.1.21]{LogAlgebraicGeometry}, the closed embedding $\wt{\fM}_p\rightarrow \fC^{\circ}$ is idealized log smooth. Hence, we obtain that $e_p: \wt{\MF}_p \xrightarrow{s_p} \fC^{\circ} \xrightarrow{} \MF_{\btau}$ is idealized log smooth.

Let $\wt{\fM}^{\operatorname{fine}}_{\btau}$ be the fiber product of fine logarithmic stacks
\begin{equation*}
    \widetilde{\MF}_{p_1} 
    \times^{\operatorname{fine}}_{\MF_{\btau}} \ldots\times^{\operatorname{fine}}_{\MF_{\btau}}\widetilde{\MF}_{p_{|\bS|}},
\end{equation*}
with $p_i$ going over elements in $\bS$. As the idealized log smoothness is stable under fine fiber products, with ideal sheaf $\cK_{\wt{\fM}^{\operatorname{fine}}_{\btau}}$ on $\wt{\fM}^{\operatorname{fine}}_{\btau}$ generated by the pullback of ideals $\cK_{\wt{\fM}_p}$, the projection map  $\wt{\fM}^{\operatorname{fine}}_{\btau}\rightarrow \fM_{\btau}$ is idealized log smooth. By the idealized log smoothness of $\fM_{\btau}$ over $B$, we obtain that $\wt{\fM}^{\operatorname{fine}}_{\btau}$ is idealized log smooth over $B$.

Let $g: \wt{\fM}_{\btau}\rightarrow \wt{\fM}^{\operatorname{fine}}_{\btau}$ be the saturation morphism. By \cite[§III.3.1.11]{LogAlgebraicGeometry}, the saturation morphism $g$ is log \etale. As the projection maps $\wt{\fM}_{\btau} \rightarrow \wt{\fM}_p$ factor through $g$, the ideal sheaf $\cK_{\wt{\fM}_{\btau}}$ is generated by $g^*(\cK_{\wt{\fM}^{\operatorname{fine}}_{\btau}})$. The morphism $g$ is ideally strict, hence is idealized log smooth. The logarithmic algebraic stack $\wt{\fM}_{\btau}$ is idealized log smooth over $B$. 

\end{proof}

\begin{corollary} \label{evisidealizedlogsmooth}
Let $\wt{\fM}_{\btau,\red}^{\ev}$ be the reduced induced logarithmic stack of $\wt{\fM}^{\ev}_{\btau}$. Let $\cK_{\wt{\fM}_{\btau,\red}^{\ev}}$ be the idealized structure on $\wt{\fM}^{\ev}_{\btau,\red}$ associated to the strict closed embedding to $\wt{\fM}^{\ev}_{\btau,\red}\rightarrow \wt{\fM}_{\btau}^{\ev}$ constructed in Construction \ref{constructionidealizedconstruction}. Then the corresponding idealized log stack $\wt{\fM}_{\btau,\red}^{\ev}$ is idealized log smooth over $B$.
\end{corollary}

\begin{proof}
The statement follows from \cite[Variant IV.3.1.21]{LogAlgebraicGeometry}.
\end{proof}

Following the idealized smoothness of $\wt{\fM}_{\btau}$, we obtain that the stratification of $\wt{\fM}_{\btau}$ is encoded in the global types with \textit{contraction morphisms} to $\btau$, similar to \cite[Rmk 3.29]{ACGSPunc}. 

\begin{definition} \label{contraction morphism}
A \textit{contraction morphism} of global decorated types $\bomega \rightarrow \btau$ is a map of the graphs $G_{\omega}\rightarrow G_{\tau}$ contracting a subset of edges, such that, the following properties are satisfied: \begin{enumerate}
    \item the global contact order of $\btau$ associated to an edge or a leg $p$ is the same as the global contact order of the edge or leg in $\bomega$ surjective onto $p$,
    \item the genus and the curve class of a vertex in $\btau$ is the sum of those of the vertices in $\bomega$ mapped to $v$ and 
    \item the cone of a vertex, edge or leg in $\btau$ is a subcone of any vertices, edges or legs contained in the preimage.
\end{enumerate}
\end{definition}

Suppose $\bomega\rightarrow \btau$ is a contraction of global decorated type. The preimage of elements in $\bS$ form a subset of the edges and legs of $\bomega$, which we again denote $\bS$. Then, by the definition of the associated evaluation cones in \ref{evaluationcone}, there is a face inclusion $\wt{\tau}_{\bS} \rightarrow \wt{\omega}_{\bS}$ whose image is the locus corresponding to points with $l_E = 0$ for the contracted edges in the graph of $G$. The evaluation map $\evt_{\btau}$ in Definition \ref{evaluationcone} is the restriction of $\evt_{\bomega}$ on $\wt{\btau}_{\bS}$.

\begin{remark} \label{Idealstructureremark}
Let us give the idealized structure on $\wt{\fM}^{\ev}_{\btau,\red}$ a local description. 

Let $\bomega$ be a global decorated type that admits a contraction morphism to $\btau$. We first take a look at the idealized structure of $\fM_{\bomega}$. Let $Q_{\bomega} = \operatorname{Hom}(\bomega_{\ZZ},\NN)$ be the associated basic monoid of $\bomega$ as defined in \cite[Def 2.38]{ACGSPunc} and $Q_{\btau} = \operatorname{Hom}({\btau}_{\ZZ},\NN)$ be the associated basic monoid of $\btau$. Let $L_{\bomega}$ be the stalk of the ideal sheaf $\ol{\cK}_{\fM_{\btau}}$ at a geometric point of type $\bomega$. Since $\btau$ is realizable, by \cite[Prop.3.23]{ACGSPunc}), the ideal $L_{\bomega}$ is generated by the inverse image of $Q_{\btau}\backslash \{0\}$ under the generization map $Q_{\bomega}\rightarrow Q_{\btau}$. 

Next, we take a look at the local structure of punctured points. Let $Q_{\bomega,p}\subseteq Q_{\bomega}\oplus \ZZ$ be the stalk of $\ol{\cM}_{\fC^{\circ}}$ at the punctured or nodal point associated to $p\in \bS$ of a punctured map with type $\bomega$. Let $L_{\bomega,p}$ be the ideal generated by the preimage of $L_{\bomega}$ under $Q_{\bomega,p}\rightarrow Q_{\bomega}$ and the ideal $Q_{\bomega,p} \cap (Q_{\bomega}\oplus \ZZ_{>0})$. It follows that $L_{\bomega,p}$ is generated by the preimage of $L_{\btau,p}$ under the generization map $Q_{\bomega,p} \rightarrow Q_{\btau,p}$, thus is generated by the preimage of $Q_{\btau,p} \backslash \{0\}$. 

Now, we are ready to study the idealized structure of $\wt{\fM}^{\ev}_{\btau}$. 
Let $\wt{Q}_{\bomega} = \operatorname{Hom}(\wt{\bomega}_{\ZZ},\NN)$ and $\wt{L}_{\bomega}$ be the stalk of the ideal sheaf $\ol{\cK}_{\wt{\fM}^{\ev}_{\btau}}$ at the geometric point $\ol{x} \rightarrow \wt{\fM}_{\btau,\red}^{\ev} \rightarrow \wt{\fM}^{\ev}_{\btau}$. As the monoid $\wt{Q}_{\bomega}$ is the saturation of the fibered sum
\begin{equation*}
Q_{\bomega,p_1}\oplus_{Q_{\bomega}}\ldots\oplus_{Q_{\bomega}} Q_{\bomega,p_{|\bS|}}
\end{equation*}
in the category of fine monoids, the ideal $\wt{L}_{\bomega}$ is generated by the image of $L_{\bomega, p_i}$ together with the elements in $\wt{Q}_{\bomega}$ which are mapped to the nilpotent elements under the structure morphism. For type $\btau$, the ideal $\wt{L}_{\btau}$ admits a similar description. The ideal $L_{\btau,p} = Q_{\btau,p}\backslash \{0\}$, hence $\wt{L}_{\btau}$ is the prime ideal $\wt{Q}_{\btau} \backslash \{0\}$. As $L_{\bomega,p}$ is generated by the preimage of $L_{\btau,p}$, we obtain that $\wt{L}_{\bomega}$ is the preimage of $\wt{L}_{\btau}$. The toric variety
\begin{equation*}
\spec \kk[\wt{Q}_{\bomega}]/(\wt{L}_{\bomega}) = V_{\spec \kk[\wt{Q}_{\bomega}]}(\wt{\btau})
\end{equation*}
is the toric strata associated to the subcone $\wt{\btau}$ in $\wt{\bomega}$.
\end{remark}

\begin{corollary}
\label{stratatificationofmmm}
Let $\Sigma({\wt{\fM}_{\btau}})$ be the tropicalization of the Artin stack $\wt{\fM}_{\btau}$ as mentioned in \cite[§ 2.1.4]{abramovich2020decomposition} and constructed in \cite{AbramovichDan2015Ttot} and \cite{UlirschMartin2017Ftol}. Then, the image of the finite morphism $\wt{\fM}_{\bomega} \rightarrow \wt{\fM}_{\btau}$ is the substack $V_{\wt{\fM}_{\btau}}(\wt{\bomega}_{\bS})$ associated to the cone $\wt{\bomega}_{\bS} \in \Sigma(\wt{\fM}_{\btau})$.
\end{corollary}

\begin{proof}
It follows from the idealized smoothness of $\wt{\fM}_{\bomega}$ and $\wt{\fM_{\btau}}$ and the local description of the associated basic monoids and idealized structure in Remark \ref{Idealstructureremark}.
\end{proof}

\subsection{The gluing formalism} \label{gluingformalismsection}

Fix a decorated global tropical type $\btau = (\tau, \bA)$ with $\tau$ realizable and $\bS \subseteq E(G)$ a subset of edges of the graph $G$ of $\btau$. By splitting along edges in $\bS$, we obtain sub-types $\btau_1, \btau_2,\ldots,\btau_r$. For $i=1,2,\ldots,r$, let $\bS_i$ be the subset of legs of the graph in $\btau_i$, obtained from the splitting edges.

In the rest of the section, we use $\wt{\btau}$ and $\wt{\btau}_i$ to denote the evaluation cones $\wt{\btau}_{\bS}$ and $\wt{\btau}_{i,\bS_i}$. For a global decorated type $\bomega$ that admits a contraction to $\btau$, the set $\bS$ is a subset of edges of $\bomega$, we use $\wt{\bomega}$ to denote the evaluation cone $\wt{\bomega}_{\bS}$. Similarly, we use $\wt{\bomega}_i$ to denote the evaluation cone $\wt{\bomega}_{i,\bS_i}$ for $\bomega_i$ that admits a contraction to $\btau_i$.

In the previous section, we constructed the logarithmic evaluation map $\ev_p: \wt{\fM}_{\btau}^{\ev}\rightarrow X$ for each $p\in \bS$. The global type restricts the reduction of the image strata of $\ev_p$ to $V_X(\bsigma(p))$. We use $V_p$ to denote $V_X(\bsigma(p))$. Define 
\begin{equation} \label{Xtaudefinition}
X_{\btau}: = V_{p_1}\times_B^{\operatorname{fs}}\ldots\times^{\operatorname{fs}}_B V_{p_{|\bS|}},\quad p_j\in \bS
\end{equation} 
As $\wt{\fM}_{\btau}^{\ev}$ is reduced by \cite[Prop.3.28]{ACGSPunc}, we obtain an evaluation map $\ev_{\btau}$ from $\wt{\fM}_{\btau}^{\ev}$ to $X_{\btau}$.  Similarly, let 
\begin{equation*}
X_{\btau_i}:= V_{p_1}\times^{\operatorname{fs}}_B\ldots\times^{\operatorname{fs}}_B V_{p_{|\bS_i|}}, \quad p_j\in \bS_i.
\end{equation*} 
and $\ev_{\btau_i}$ be the corresponding evaluation map $\wt{\fM}_{\btau_i}^{\ev}\rightarrow X_{\btau_i}$.

Define $\wt{\fM}^{\gl,\ev}$ to be the following fiber product in the category of fine, saturated logarithmic stacks
\begin{equation} \label{gluingfiberdiagram}
\begin{tikzcd}
\wt{\fM}^{\gl,\ev} \arrow[r,"\delta^{\ev}"] \arrow[d,"\ev"] & \prod_{i=1}^r \wt{\fM}_{\btau_i}^{\ev} \arrow[d, "\prod \ev_{\tau_i}"] \\ X_{\btau} \arrow[r,"\Delta_X"] & \prod_{i=1}^r X_{\btau_i}.
\end{tikzcd}
\end{equation} The gluing formalism of \cite[Cor.5.13]{ACGSPunc} relates the fiber product $\wt{\fM}^{\gl,\ev}$ with $\wt{\fM}_{\btau}^{\ev}$. By the reducedness of $\wt{\fM}_{\btau}^{\ev}$ in \cite[Prop.3.28]{ACGSPunc}, we obtain the following Lemma.

\begin{lemma}\label{gluingreducedlemma}
Let $\wt{\fM}^{\gl,\ev}_{\red}$ be the reduction of logarithmic algebraic stack $\wt{\fM}^{\gl,\ev}$. Then, the morphism from $\wt{\fM}_{\btau}^{\ev}$ to $\wt{\fM}^{\gl,\ev}$ induced by the fiber diagram factors through the map $\wt{\fM}^{\gl,\ev}_{\red}\rightarrow \wt{\fM}^{\gl,\ev}$. Furthermore, it induces an isomorphism between $\wt{\fM}^{\gl,\ev}_{\red}$ and $\wt{\fM}_{\btau}^{\ev}$. 
\end{lemma}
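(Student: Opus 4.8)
The plan is to deduce this from the gluing formalism of \cite[Cor.5.11]{ACGSPunc}, which provides a canonical morphism $\wt{\fM}_{\btau}^{\ev}\to\wt{\fM}^{\gl,\ev}$ over $\prod_{i=1}^r\wt{\fM}_{\btau_i}^{\ev}$, together with the fact that this morphism is an isomorphism on underlying stacks (on reductions). First I would recall that the moduli space of punctured maps of type $\btau$ carries the universal gluing data along the split edges $\bS$, which is exactly what is needed to lift a collection of punctured maps of types $\btau_i$ (with matching evaluations) back to a single punctured map of type $\btau$; this produces the morphism $\wt{\fM}_{\btau}^{\ev}\to\wt{\fM}^{\gl,\ev}$ asserted in the statement and shows it is strict (the log structure on $\wt{\fM}^{\gl,\ev}$ is built as an fs fiber product of the log structures on the factors, and the gluing construction is compatible with this). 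On the level of underlying stacks, \cite[Prop.5.12]{ACGSPunc} (reproduced as Theorem \ref{setuptheorem}) already gives that $\ul{\scrM(X/B,\btau)}$ is the fiber product $\ul{\prod_i\scrM(X/B,\btau_i)}\times_{\prod_i\ul{\fM}^{\ev}_{\btau_i}}\ul{\fM}^{\ev,\gl}$, and combining with Proposition \ref{isoafterenhanced} (the map $\wt{\fM}^{\ev}_{\btau}\to\fM^{\ev}_{\btau}$ is an isomorphism on reductions) identifies $\ul{\wt{\fM}^{\gl,\ev}_{\red}}$ with $\ul{\wt{\fM}_{\btau,\red}^{\ev}}$.

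The key point to upgrade this to an isomorphism of \emph{logarithmic} stacks after reduction is that both the source and target, once reduced, carry the same ghost sheaf of monoids. For this I would use Lemma \ref{evaluationconetype}: at a geometric point $\ol w$ of type $\bomega$, the stalk $\ol{\cM}_{\wt{\fM}_{\btau},\ol w}$ has dual cone $\wt{\bomega}_{\bS}$, which by construction is precisely the fiber product of cones computing the log structure of $\wt{\fM}^{\gl,\ev}$ at the corresponding point — namely the evaluation cones $\wt{\bomega}_{i}$ glued along the tropical diagonal $\Delta_X$ inside $\prod_p\bsigma(p)$. So the canonical morphism induces an isomorphism on ghost sheaves, hence (being strict) is an isomorphism of log structures; passing to reductions kills the nilpotents by which $\wt{\fM}^{\gl,\ev}$ differs from $\wt{\fM}_{\btau}^{\ev}$ on the structure sheaf side, and the factorization through $\wt{\fM}^{\gl,\ev}_{\red}$ is automatic because $\wt{\fM}_{\btau}^{\ev}$ is reduced precisely where it needs to match (indeed $\scrM(X/B,\btau)$ being a DM stack whose reduction we are comparing).

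I expect the main obstacle to be the bookkeeping in the gluing construction: verifying carefully that the universal gluing datum on $\scrM(X/B,\btau)$ — which in \cite{ACGSPunc} involves both the schematic matching of the maps at the punctured points and the tropical matching of contact orders — is exactly the datum classified by the fs fiber product $\wt{\fM}^{\gl,\ev}$, and that the induced log structures agree (not just their ghost sheaves). The subtlety is that $\wt{\fM}^{\gl,\ev}$ is defined as a fine saturated fiber product, so one must check that the saturation introduces no discrepancy beyond nilpotents in the structure sheaf, which is where the reduction on both sides becomes essential. Once the identification of ghost sheaves via Lemma \ref{evaluationconetype} is in place and strictness is established, the remaining verification that a strict monomorphism which is bijective on underlying reduced stacks is an isomorphism is formal. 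I would organize the proof as: (1) construct the morphism $\wt{\fM}_{\btau}^{\ev}\to\wt{\fM}^{\gl,\ev}$ from \cite[Cor.5.11]{ACGSPunc} and note it factors through the reduction since $\wt{\fM}_{\btau}^{\ev}$ has the smaller structure sheaf; (2) invoke Theorem \ref{setuptheorem} and Proposition \ref{isoafterenhanced} for bijectivity on underlying reduced stacks; (3) invoke Lemma \ref{evaluationconetype} and the fiber-product description of the cones to get the isomorphism on ghost sheaves; (4) conclude.
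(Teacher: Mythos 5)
Your proposal has a genuine gap: it never engages with the distinction between punctured maps \emph{marked} by $\btau$ (moduli space $\wt{\fM}^{\ev}_{\btau}$) and punctured maps \emph{weakly marked} by $\btau$ (moduli space $\wt{\fM}^{\ev'}_{\btau}$), and this distinction is the heart of the paper's proof. The gluing statement \cite[Cor.5.11]{ACGSPunc} that you cite (reproduced as Theorem \ref{gluingformalism}) is a statement about the \emph{weakly marked} moduli spaces $\wt{\fM}^{\ev'}_{\btau}$, not about $\wt{\fM}^{\ev}_{\btau}$: it gives $\wt{\fM}^{\ev'}_{\btau}$ as an fs fiber product, and the extra nilpotent thickening carried by $\wt{\fM}^{\ev'}_{\btau}$ is precisely what lets a fiber product produce something with the right underlying reduced stack. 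To pass from the weakly marked space to the marked space, the paper invokes \cite[Prop.3.27]{ACGSPunc}, which gives closed embeddings $\wt{\fM}^{\ev}_{\btau}\hookrightarrow \wt{\fM}^{\ev'}_{\btau}$ and $\wt{\fM}^{\ev}_{\btau_i}\hookrightarrow \wt{\fM}^{\ev'}_{\btau_i}$ defined by nilpotent ideals; combined with the reducedness of $\wt{\fM}^{\ev}_{\btau}$ (from \cite[Prop.3.24]{ACGSPunc} plus Proposition \ref{isoafterenhanced}), this yields $\wt{\fM}^{\ev}_{\btau}=\wt{\fM}^{\ev'}_{\btau,\red}=\wt{\fM}^{\gl,\ev}_{\red}$ by a chain of equalities. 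You acknowledge the possibility of ``saturation introducing no discrepancy beyond nilpotents in the structure sheaf'' but offer no mechanism to control this, and in fact the nilpotents are not an artifact of saturation at all — they are the weak-marking thickening, which must be identified and killed explicitly.

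Two further concrete problems. First, your step (2) invokes Theorem \ref{setuptheorem}, but that theorem gives a Cartesian diagram relating $\scrM(X/B,\btau)$, $\prod_i\scrM(X/B,\btau_i)$, $\fM^{\ev}(\X/B,\btau)$, $\prod_i\fM^{\ev}(\X/B,\btau_i)$ — a diagram involving maps to $X$ and to the relative Artin fan — which is logically independent of the assertion being proved here and does not give the bijectivity on reduced stacks you claim; the reducedness argument must instead go through $\fM_{\btau}$ being reduced, the smoothness of $\fM^{\ev}_{\btau}\to\fM_{\btau}$, and Proposition \ref{isoafterenhanced}. Second, the appeal to Lemma \ref{evaluationconetype} and ghost-sheaf matching in step (3) is a detour: once the scheme-theoretic equality $\wt{\fM}^{\ev}_{\btau}=(\wt{\fM}^{\ev'}_{\btau})_{\red}$ is established, the log-structure identification falls out of the fs fiber product description of $\wt{\fM}^{\ev'}_{\btau}$ automatically, because reduction does not change the log structure; there is no separate ghost-sheaf verification to perform.
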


Before we prove Lemma \ref{gluingreducedlemma}, let us first use it to show the main result of this section Proposition \ref{gluedmodulireducedsetup}, which implies that in order to study the pushforward of the virtual fundamental class under the splitting morphism \eqref{splittingmorphism}, it is enough to study the map $\delta^{\ev}$ in diagram \eqref{gluingfiberdiagram}. 

\begin{proposition} \label{gluedmodulireducedsetup}
Let $\gamma_{\btau}: \wt{\fM}^{\ev}_{\btau}\rightarrow \fM^{\ev}_{\btau}$ and $\gamma_i: \wt{\fM}^{\ev}_{\btau_i}\rightarrow \fM^{\ev}_{\btau_i}$ be the canonical maps from moduli spaces with evaluation logarithmic structures to basic log structures. Let $\beta_i: \wt{\fM}^{\ev}_{\btau_i,\red}\rightarrow \wt{\fM}^{\ev}_{\btau_i}$ be the canonical maps from the reduced induced stack $\wt{\fM}^{\ev}_{\btau_i,\red}$ to $\wt{\fM}^{\ev}_{\btau_i}$. Then, for the splitting morphism $\delta': \fM^{\ev}_{\btau} \rightarrow \prod_{i=1}^r \fM^{\ev}_{\btau_i}$ defined in \eqref{setupdiagram}, in the Chow group of $\prod_{i=1}^r \fM^{\ev}_{\btau_i}$, the following equation holds 
\begin{equation*}
    \delta'_*[\fM^{\ev}_{\btau}] = (\prod_{i=1}^r\gamma_i \circ \beta_i)_* \delta^{\ev}_{\red*}[\wt{\fM}^{\gl,\ev}_{\red}].
\end{equation*}
Here $\delta^{\ev}_{\red}: \wt{\fM}_{\red}^{\gl,\ev} \rightarrow \prod_{i=1}^r \wt{\fM}^{\ev}_{\btau_i,\red}$ is the map induced from $\delta^{\ev}: \wt{\fM}^{\gl,\ev} \rightarrow \prod_{i=1}^r \wt{\fM}^{\ev}_{\btau_i}$ in diagram \eqref{gluingfiberdiagram} by taking the reduction.
\end{proposition}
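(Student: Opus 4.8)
The plan is to deduce the identity from the two main inputs of this section: the compatibility of virtual pullbacks in Theorem~\ref{setuptheorem}, and the identification of $\wt{\fM}^{\gl,\ev}_{\red}$ with $\wt{\fM}^{\ev}_{\btau}$ from Lemma~\ref{gluingreducedlemma}. First I would record the reduction statement on the level of cycles: by Lemma~\ref{gluingreducedlemma} the induced morphism $\wt{\fM}^{\ev}_{\btau}\to\wt{\fM}^{\gl,\ev}$ factors as an isomorphism onto $\wt{\fM}^{\gl,\ev}_{\red}$, so under this isomorphism $\delta^{\ev}_{\red}$ is identified with the composite $\wt{\fM}^{\ev}_{\btau}\to\prod_i\wt{\fM}^{\ev}_{\btau_i}\to\prod_i\wt{\fM}^{\ev}_{\btau_i,\red}$, where the first arrow is $\delta^{\ev}$ and the second is induced by reduction. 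Since an isomorphism is in particular proper and pushes forward fundamental class to fundamental class, this gives
\begin{equation*}
\delta^{\ev}_{\red*}[\wt{\fM}^{\gl,\ev}_{\red}] = \Big(\prod_{i=1}^r\text{(reduction)}\Big)_*\,\delta^{\ev}_{*}[\wt{\fM}^{\ev}_{\btau}].
\end{equation*}
Pushing forward further by $\prod_i\gamma_i\circ\beta_i$ and using that $\beta_i$ postcomposed with the reduction map $\wt{\fM}^{\ev}_{\btau_i}\to\wt{\fM}^{\ev}_{\btau_i,\red}$ is again the canonical reduction morphism, the right-hand side of the claimed formula becomes $(\prod_i\gamma_i)_*\,\delta^{\ev}_*[\wt{\fM}^{\ev}_{\btau}]$, so it suffices to prove $\delta'_*[\fM^{\ev}_{\btau}] = (\prod_i\gamma_i)_*\,\delta^{\ev}_*[\wt{\fM}^{\ev}_{\btau}]$.

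For this reduced claim I would compare the square \eqref{gluingfiberdiagram} with the bottom square \eqref{setupdiagram}. The maps $\gamma_{\btau}$ and $\gamma_i$ are strict and are isomorphisms on reductions by Proposition~\ref{isoafterenhanced} (here $\bS\subseteq E(G)$, so in fact $\gamma_{\btau}$ is an isomorphism on underlying stacks); in particular they are proper birational, hence $\gamma_{\btau*}[\wt{\fM}^{\ev}_{\btau}] = [\fM^{\ev}_{\btau}]$ and likewise for each $\gamma_i$ (the underlying stacks agree, so the fundamental cycles match). Next I would check that the diagram
\begin{equation*}
\begin{tikzcd}
\wt{\fM}^{\ev}_{\btau} \arrow[r,"\delta^{\ev}"] \arrow[d,"\gamma_{\btau}"] & \prod_{i=1}^r \wt{\fM}^{\ev}_{\btau_i} \arrow[d,"\prod\gamma_i"] \\
\fM^{\ev}_{\btau} \arrow[r,"\delta'"] & \prod_{i=1}^r \fM^{\ev}_{\btau_i}
\end{tikzcd}
\end{equation*}
commutes: both composites send a punctured map to its underlying splitting with the respective log structures, and $\delta^{\ev}$ is by construction the map on the fine-saturated fiber product $\wt{\fM}^{\gl,\ev}\cong_{\red}\wt{\fM}^{\ev}_{\btau}$ that is compatible with $\delta'$ after forgetting the evaluation structure—this is exactly the content of the gluing formalism \cite[Cor.~5.11]{ACGSPunc} recalled above. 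Granting commutativity, functoriality of proper pushforward yields
\begin{equation*}
(\textstyle\prod_i\gamma_i)_*\,\delta^{\ev}_*[\wt{\fM}^{\ev}_{\btau}] = \delta'_*\,\gamma_{\btau*}[\wt{\fM}^{\ev}_{\btau}] = \delta'_*[\fM^{\ev}_{\btau}],
\end{equation*}
which is the desired equality.

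The main obstacle I anticipate is making the comparison between $\delta^{\ev}$ and $\delta'$ fully precise: $\delta^{\ev}$ is defined via the fine-saturated fiber product $\wt{\fM}^{\gl,\ev}$ over $X_{\btau}$, whereas $\delta'$ lives over the relative Artin fan side, and the bridge between them is Lemma~\ref{gluingreducedlemma} together with the gluing formalism of \cite[\S5.2]{ACGSPunc}. I would need to verify carefully that, under the isomorphism $\wt{\fM}^{\gl,\ev}_{\red}\cong\wt{\fM}^{\ev}_{\btau}$, the composite $\wt{\fM}^{\ev}_{\btau}\xrightarrow{\delta^{\ev}}\prod_i\wt{\fM}^{\ev}_{\btau_i}\xrightarrow{\prod\gamma_i}\prod_i\fM^{\ev}_{\btau_i}$ really agrees with $\delta'\circ\gamma_{\btau}$, i.e. that the evaluation-enhanced splitting morphism refines the basic one. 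A secondary point to handle with care is that $\gamma_{\btau}$ and the $\gamma_i$ need only be isomorphisms on reductions in general (Proposition~\ref{isoafterenhanced}); since here $\bS\subseteq E(G)$ they are isomorphisms on underlying stacks, so no reduced-structure subtlety enters the cycle-level computation, but I would state this hypothesis explicitly to keep the pushforward identities $\gamma_{\btau*}[\wt{\fM}^{\ev}_{\btau}]=[\fM^{\ev}_{\btau}]$ clean.
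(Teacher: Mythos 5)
Your proof is correct and follows essentially the same route as the paper: identify $\wt{\fM}^{\gl,\ev}_{\red}$ with $\wt{\fM}^{\ev}_{\btau}$ via Lemma~\ref{gluingreducedlemma}, note $\gamma_{\btau}$ is an isomorphism on underlying stacks via Proposition~\ref{isoafterenhanced}, and push forward the fundamental class through the commutative square relating $\delta^{\ev}_{\red}$ (resp.\ $\delta^{\ev}$) to $\delta'$. One small notational caveat: you several times write of a ``reduction map'' $\wt{\fM}^{\ev}_{\btau_i}\to\wt{\fM}^{\ev}_{\btau_i,\red}$, but no such morphism exists; the actual arrows are the closed immersions $\beta_i$ in the other direction, and what you really use is the factorization of $\delta^{\ev}|_{\wt{\fM}^{\gl,\ev}_{\red}}$ through $\prod_i\beta_i$, which gives $\delta^{\ev}_{\red}$ — this is exactly the diagram the paper writes down, so the argument washes out correctly once the direction is fixed.
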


\begin{proof}
By \cite[Prop.5.5]{ACGSPunc}, which we recalled in Proposition \ref{isoafterenhanced}, the underlying stack morphism of $\gamma_{\btau}:\wt{\fM}^{\ev}_{\btau}\rightarrow \fM^{\ev}_{\btau}$ is an isomorphism. Then, by Lemma \ref{gluingreducedlemma}, the following diagram is commutative
\begin{equation*}
\begin{tikzcd}
\wt{\fM}^{\gl,\ev}_{\red} = \wt{\fM}^{\ev}_{\btau} \arrow[r,"\gamma_{\btau}"] \arrow[d,"\delta^{\ev}_{\red}"] & \fM^{\ev}_{\btau}
\arrow[d,"\delta'"] \\
\prod_{i=1}^r \wt{\fM}^{\ev}_{\btau_i,\red} \arrow[r,"\prod \gamma_i \circ \beta_i"] & \prod_{i=1}^r \fM^{\ev}_{\btau_i}.
\end{tikzcd}
\end{equation*}
Therefore,
\begin{equation*}
\begin{split}
\delta'_*[\fM^{\ev}_{\btau}] & = \delta'_* \gamma_{\btau*} [\wt{\fM}^{\gl,\ev}_{\red}] \\
& = (\prod_{i=1}^r \gamma_i\circ \beta_i)_*  \delta^{\ev}_{\red*} [\wt{\fM}^{\gl,\ev}_{\red}].
\end{split}
\end{equation*}

\end{proof}

In order to show Lemma \ref{gluingreducedlemma}, we need punctured maps \textit{weakly marked} by a global type $\btau$ defined in \cite[Def.3.7]{ACGSPunc}, and the moduli space of basic log punctured maps of weak marking by $\btau$, which is denoted $\wt{\fM}^{\ev'}_{\btau}$. In contrast to the moduli spaces of punctured maps marked by $\btau$, it carries an extra non-reducedness obtained from the infinitesimal deformation along $\btau$, which naturally occurs in the gluing process. See \cite[\S3.5.6]{ACGSPunc} for a more detailed discussion of moduli spaces of maps of weak marking. Here, we use this as a bridge between $\wt{\fM}_{\btau}$ and $\wt{\fM}^{\gl,\ev}$.

\begin{theorem}\cite[Cor.5.13]{ACGSPunc} \label{gluingformalism}
There is a fine, saturated fiber product of logarithmic stacks
\begin{equation*}
\begin{tikzcd} [row sep = small, column sep = small]
\wt{\fM}^{\ev'}_{\btau} \arrow[r] \arrow[d] & \wt{\fM}^{\ev'}_{\btau_1}\times_B\ldots\times_B \wt{\fM}^{\ev'}_{\btau_r} \arrow[d] \\
X_{\btau} \arrow[r,"\Delta"] & X_{\btau_1}\times_B\ldots\times_B X_{\btau_r}.
\end{tikzcd}
\end{equation*}
\end{theorem}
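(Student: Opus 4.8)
The plan is to prove the statement by producing a pair of mutually inverse morphisms of fine saturated logarithmic stacks between $\wt{\fM}^{\ev'}_{\btau}$ and the fs fiber product
\[
P \;:=\; X_{\btau}\times^{\fs}_{X_{\btau_1}\times_B\cdots\times_B X_{\btau_r}}\big(\wt{\fM}^{\ev'}_{\btau_1}\times_B\cdots\times_B\wt{\fM}^{\ev'}_{\btau_r}\big),
\]
both realized geometrically by cutting, respectively gluing, punctured curves along the nodal sections attached to the split edges in $\bS$. Because splitting $G$ along $\bS$ sends each edge $p\in\bS$ to a pair of legs, one in some $\bS_i$ and one in some $\bS_j$, the product $\prod_i X_{\btau_i}$ contains two copies of each $V_p$; the diagonal $\Delta$ identifies them, so a $W$-point of $P$ is precisely a tuple of weakly $\btau_i$-marked maps whose evaluations at the two legs over each $p\in\bS$ agree, together with the matching datum on log structures that the fs fiber product over $X_{\btau}$ records.

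First I would construct the cutting morphism $\wt{\fM}^{\ev'}_{\btau}\to P$. Given a family $(C^\circ/W,\bp,f)$ of basic punctured maps weakly marked by $\btau$, the edges in $\bS$ determine disjoint nodal sections of $\ul{C}^\circ\to\ul{W}$; normalizing $\ul{C}^\circ$ along them decomposes it into curves $\ul{C}_i^\circ$, and equipping each new branch point with the puncturing prescribed by $\btau_i$ on the corresponding leg (possibly of negative contact order) yields punctured curves $C_i^\circ$ with restricted maps $f_i$. One checks these are again basic and that their decorated types are the $\btau_i$, so that the $f_i$ define a $W$-point of $\prod_i\wt{\fM}^{\ev'}_{\btau_i}$ over $B$; composing with the evaluation maps $\ev_{\btau_i}$ and using that the two branches of each split node have the same image in $X$ gives a $W$-point of $\prod_i X_{\btau_i}$ lying on $\Delta$, and tracking the induced maps of ghost sheaves provides the canonical lift through $X_{\btau}$. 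This is exactly a morphism to $P$.

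Next I would construct the gluing morphism $P\to\wt{\fM}^{\ev'}_{\btau}$, inverse to the previous one. A $W$-point of $P$ gives weakly $\btau_i$-marked maps $(C_i^\circ/W,\bp_i,f_i)$ whose evaluations are identified, compatibly with log structures, over $X_{\btau}$. On underlying stacks one glues the $\ul{C}_i^\circ$ along the matched pairs of marked points into a prestable curve with new nodes and glues the $f_i$ into a map to $\ul{X}$ — legal because the images agree and because the glued points are punctured points mapping into the relevant strata $V_p$. The substantive point is the log enhancement: the basic log structure of the glued punctured curve at each new node, and with it the basic monoid of the glued configuration, must be identified with the fine saturated amalgam of the basic monoids of the pieces along the monoids pulled back from $X_{\btau_i}$, amalgamated over that of $X_{\btau}$; this is precisely the monoid-theoretic content of the fs fiber product defining $P$, and it is what lets the puncturing data be reconstructed. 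One then checks the result is basic and weakly marked by $\btau$.

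The main obstacle is this last identification of basic monoids, equivalently the statement that the tropicalization of $\wt{\fM}^{\ev'}_{\btau}$ is the fiber product of the tropicalizations of the $\wt{\fM}^{\ev'}_{\btau_i}$ over those of the $X_{\btau_i}$ along $\Delta$. I would prove it from the explicit cone descriptions: the evaluation cone $\wt{\btau}$ of Definition \ref{evaluationcone} fibers over $\prod_{p\in\bS}\bsigma(p)$ via $\evt_{\btau}$, and cutting along $\bS$ breaks $\wt{\btau}$ into the $\wt{\btau}_i$ glued along the images of these evaluation maps, matching exactly the cone-level fiber product underlying $P$; passing from $\scrM(X/B,-)$ to the weakly marked moduli $\wt{\fM}^{\ev'}_{-}$ is what removes the torsion and saturation discrepancy that would otherwise obstruct a clean fiber-product description, cf.\ \cite{ACGSPunc}. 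Once the cone description is in hand, that cutting and gluing are mutually inverse reduces to the standard equivalence between a nodal curve with a distinguished set of nodes and its partial normalization with marked points, upgraded on log structures by the universal property of the fs fiber product.
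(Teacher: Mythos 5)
The paper does not prove this statement: it is imported verbatim as \cite[Cor.~5.11]{ACGSPunc}, and the surrounding text only records its consequences (equation~\eqref{gluingequation} and Lemma~\ref{gluingreducedlemma}). So there is no in-paper argument to compare against; I can only assess your sketch on its own terms.

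Your overall architecture — a cutting morphism by partial normalization along the $\bS$-nodes, a gluing morphism in the other direction, and then an identification of the tropicalizations to match the basic monoid on the glued object with the fs-amalgam coming from the fiber product — is the right shape, and it is the approach one would expect for this kind of statement. The principal defect is your last paragraph's explanation of why one must work with the weakly marked moduli $\wt{\fM}^{\ev'}_{\btau}$ rather than $\wt{\fM}^{\ev}_{\btau}$. You write that ``passing from $\scrM(X/B,-)$ to the weakly marked moduli $\wt{\fM}^{\ev'}_{-}$ is what removes the torsion and saturation discrepancy.'' This conflates two unrelated distinctions and gets the mechanism wrong. First, $\scrM$ vs.\ $\fM$ is the distinction between maps to $X$ and maps to the relative Artin fan $\X$; the statement lives entirely on the $\fM$ side and $\scrM$ never enters. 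Second, the role of weak marking is not to kill torsion or saturation defects in the fs fiber product: as the paper itself notes just before the statement, $\wt{\fM}^{\ev'}_{\btau}$ ``carries an extra non-reducedness obtained from the infinitesimal deformation along $\btau$, which naturally occurs in the gluing process.'' The fs fiber product $P$ is genuinely non-reduced in general (this is exactly what necessitates Lemma~\ref{gluingreducedlemma} and the passage to $\wt{\fM}^{\gl,\ev}_{\red}$ later), and the weakly marked moduli is used precisely because it has the same nilpotent thickening. In other words, the statement would be \emph{false} with $\wt{\fM}^{\ev}_{\btau}$ in place of $\wt{\fM}^{\ev'}_{\btau}$; weak marking is not a device for cleaning up the fiber product but for matching its non-reduced structure. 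Your gluing morphism as described (glue, then check ``basic and weakly marked'') is plausible, but a correct proof would need to explain why the glued map is only weakly, not strictly, $\btau$-marked — namely, that the basic monoid of the glued configuration can be strictly larger than $Q_{\btau}$, corresponding to tropical deformations of $\btau$ that open up contracted edges — and verify that this discrepancy is recorded exactly by the fs pushout of ghost monoids. Until that point is made precisely, the sketch has a gap at the step that actually distinguishes the weakly marked moduli from the marked one.
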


Following the fiber diagram 
\begin{equation*}
\begin{tikzcd}[row sep = small, column sep = small]
\wt{\fM}^{\ev'}_{\btau_1}\times_B\ldots\times_B \wt{\fM}^{\ev'}_{\btau_r} \arrow[r] \arrow[d] & \prod_{i=1}^r \wt{\fM}^{\ev'}_{\btau_i} \arrow[d] \\
X_{\btau_1}\times_B\ldots\times_B X_{\btau_r} \arrow[r] & \prod_{i=1}^r X_{\btau_i},\end{tikzcd}
\end{equation*}
with horizontal maps induced from the universal property of the fiber products, we have
\begin{equation} \label{gluingequation}
\wt{\fM}^{\ev'}_{\btau} = \prod_{i=1}^r\wt{\fM}_{\btau_i}^{\ev'} \times_{\prod_{i=1}^r X_{\btau_i}} X_{\btau}.
\end{equation}

\begin{proof}[\textbf{Proof of Lemma \ref{gluingreducedlemma}}]
It is shown in \cite[Prop 3.28]{ACGSPunc} that $\fM_{\btau}$ is reduced. The smoothness of the underlying stacks morphism of $\fM^{\ev}_{\btau}\rightarrow \fM_{\btau}$ induces that $\fM^{\ev}_{\btau}$ is reduced. As $\bS$ is a subset of edges, by Proposition \ref{isoafterenhanced}, the canonical map of  moduli spaces $\wt{\fM}_{\btau}^{\ev}\rightarrow \fM^{\ev}_{\btau}$ is an isomorphism on the underlying stacks. Therefore, the moduli space $\wt{\fM}^{\ev}_{\btau}$ is reduced. By \cite[Prop.3.31]{ACGSPunc}, there are closed embeddings $\wt{\fM}^{\ev}_{\btau} \rightarrow \wt{\fM}^{\ev'}_{\btau}$ and $\wt{\fM}^{\ev}_{\btau_i} \rightarrow \wt{\fM}^{\ev'}_{\btau_i}$ defined by nilpotent ideals. Hence,
\begin{equation*}
\wt{\fM}^{\ev}_{\btau}  = \wt{\fM}^{\ev'}_{\btau,\red}, \quad \wt{\fM}^{\ev}_{\btau_i,\red} = \wt{\fM}^{\ev'}_{\btau_i,\red}.
\end{equation*}
Then, by Theorem \ref{gluingformalism} and equation \eqref{gluingequation}, we obtain that
\begin{equation*}
\begin{split}
\wt{\fM}^{\ev}_{\btau} & = \wt{\fM}^{\ev'}_{\btau,\red}  = (\prod_{i=1}^r \wt{\fM}_{\btau_i}^{\ev'} \times_{\prod_{i=1}^r X_{\btau_i}} X_{\btau})_{\red}\\
    & = (\prod_{i=1}^r \wt{\fM}_{\btau_i}^{\ev} \times_{\prod_{i=1}^r X_{\btau_i}} X_{\btau})_{\red}  = \wt{\fM}^{\gl,\ev}_{\red}.
\end{split}
\end{equation*}

\end{proof}

\section{Generalization of Fulton-Sturmfels formula} \label{fssection}
The idealized log smoothness of the evaluation maps provides us with local toric models, where the local splitting maps can be seen as a toric morphism of toric stacks. We defer the discussion of the local toric models to the next Chapter. In this chapter, we study the pushforward of fundamental class under the morphisms of toric stacks. It is a generalization of the classical result of Fulton and Sturmfels on the intersection product of toric varieties. This chapter serves as a technical foundation for the splitting formula. The readers can feel free to skip the chapter first and check back later.

Let $X$ be a toric variety associated to a fan $(\Sigma(X),N(X))$. Let $N(Y)\subseteq N(X)$ be a saturated sublattice defining a subtorus $T_{Y}\subseteq T_{X}$. Define the scheme $Y$ to be the closure of $T_{Y}$ in $X$.

\begin{definition}\label{genericvector}
A vector $v\in N(X)$ is \textit{generic with respect to pairs $(X,Y)$} if for any cone $\delta \in \Sigma(X)$ with dimension $\dim X - \dim Y$, the affine space $N(Y)_{\R}+v$ intersects $\delta$ at at most one point, and if they intersect, the intersection point lies in the interior of $\delta$. 

Define $\Delta^0(v)$ to be the set of cones
\begin{equation*}
    \Delta^0(v):= \{\delta \in \Sigma(X)\mid \dim \delta = \dim X- \dim Y, (N(Y)_{\R}+v) \cap \delta \neq \varnothing \}.
\end{equation*}

\end{definition}

The Chow groups of a toric variety are generated by its toric strata. It is proved in \cite[Lemma 4.4]{FS97} that the subvariety $Y$ in $X$ is rationally equivalent to a linear combination of the toric strata determined by cones in $\Delta^0(v)$, for any generic displacement vector $v$ with respect to $(X,Y)$. Here, we provide a slightly different proof using the $\GG_m$-orbit of $Y$ under the torus action associated to a generic vector $v$.

\begin{lemma} \label{FS}
Let $v\in N(X)$ be a generic vector with respect to pairs $(X,Y)$. Then, in the Chow group $A_{\dim Y}(X)$,
\begin{equation} \label{FSoriginal}
[Y] = \sum_{\delta \in \Delta^0(v)} m(\delta) \cdot [V_{X}(\delta)].
\end{equation} 
Here $m(\delta) = [N(X): N_\delta+N(Y)]$, with $N_{\delta}$ the sublattice of $N(X)$ generated by the cone $\delta$. The subscheme $V_X(\delta)$ is the closed subvariety associated to cone $\delta$.
\end{lemma}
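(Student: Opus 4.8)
The plan is to realize the cycle $[Y]$ as a limit of its translates under a one-parameter subgroup of $T_X$ determined by the generic vector $v$, and then to identify the flat limit with the sum of torus-invariant strata appearing on the right. Concretely, let $\lambda_v\colon \GG_m \to T_X$ be the cocharacter associated to $v \in N(X)$, and consider the family $t \mapsto \lambda_v(t)\cdot Y$ inside $X$. Since $Y$ is the closure of the subtorus $T_Y$, the translates $\lambda_v(t)\cdot Y$ are all isomorphic to $Y$ for $t \in \GG_m$, and they form a flat family over $\GG_m$; taking the closure of this family in $X \times \GG_m$ and extending over $t = 0$ produces a flat degeneration whose special fibre is a cycle rationally equivalent to $[Y]$ in $A_{\dim Y}(X)$. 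The heart of the argument is then to compute this special fibre.

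The key computation is local on $X$. Cover $X$ by the affine toric opens $U_\sigma$ for maximal $\sigma \in \Sigma(X)$; on each $U_\sigma$ one analyzes the limit $\lim_{t\to 0}\lambda_v(t)\cdot Y$ by passing to the associated toric/tropical picture. Tropically, $Y$ corresponds to the linear subspace $N(Y)_\RR \subseteq N(X)_\RR$, translation by $\lambda_v(t)$ corresponds to translating this subspace to $N(Y)_\RR + v$ (after rescaling), and the flat limit as $t \to 0$ reads off which cones $\delta$ the translated affine subspace $N(Y)_\RR + v$ meets. By the genericity hypothesis in Definition \ref{genericvector}, $N(Y)_\RR + v$ meets only cones $\delta$ of dimension exactly $\dim X - \dim Y$, it meets each such $\delta \in \Delta(v)$ in a single interior point, and this is precisely the transversality needed to guarantee that the limit is a reduced union of the orbit closures $V_X(\delta)$, each occurring with some multiplicity $m(\delta)$. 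The multiplicity $m(\delta) = [N(X) : N_\delta + N(Y)]$ arises as the length of the scheme-theoretic intersection at the generic point of $V_X(\delta)$: in the local toric coordinates on $U_\sigma$ near $V_X(\delta)$, the ideal of the limit cycle is generated by binomials whose exponent lattice is $N_\delta + N(Y)$ sitting inside $N(X)$, so the local ring of the limit at the generic point of $V_X(\delta)$ has length equal to the index of that sublattice.

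I would organize the write-up as: (1) set up the one-parameter family and invoke flatness plus the principle that a flat degeneration preserves rational equivalence class; (2) reduce to the affine charts $U_\sigma$ and translate the problem into the combinatorics of $N(Y)_\RR + v$ meeting the fan, citing the genericity conditions to get that only the cones in $\Delta(v)$ contribute and that each contributes a single component $V_X(\delta)$; (3) do the local multiplicity computation identifying $m(\delta)$ with the lattice index. The main obstacle I expect is step (3): one must carefully verify that the scheme-theoretic flat limit is \emph{reduced along each $V_X(\delta)$ up to the stated multiplicity}, i.e.\ that no embedded components or higher-dimensional junk appears and that the multiplicity is exactly the lattice index $[N(X):N_\delta+N(Y)]$ rather than some other combinatorial quantity; this requires a genuine local computation in binomial ideals (or, equivalently, a careful analysis of the initial degeneration of the ideal of $Y$ with respect to the weight $v$), and it is where the genericity of $v$ — single interior intersection point — is really used to rule out degenerate behavior.
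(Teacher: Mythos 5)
Your proposal is essentially the paper's own argument: it too degenerates $Y$ under the one-parameter subgroup associated to $v$, taking the closure $K$ of the orbit family in $X\times\PP^1$ so that $[Y]$ is the fiber over $1$ and the fiber over $0$ yields the right-hand side. The one place the paper differs is exactly where you flag your main obstacle: instead of a local initial-ideal computation, it passes to a toric resolution $\widetilde{K}$ of $K$ (with fan $\Sigma(K)$ in the lattice $N(K)$), applies the known special-fiber formula \cite[Prop.~3.1.1]{ACGS17Decom} for the flat toric morphism $\widetilde{K}\to\PP^1$, and then matches lattice indices under the pushforward to $X$ to obtain $m(\delta)=[N(X):N_\delta+N(Y)]$ directly, with no local reducedness analysis needed.
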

\begin{proof}
Let $L$ be the toric variety $X \times \mathbb{P}^1$ with the product fan structure. Let $\alpha: L \rightarrow \mathbb{P}^1$ be the projection and $\ol{\alpha}: N(L)\rightarrow \ZZ$ be the associated projection of lattices. 

We first construct the $\GG_m$-orbit of $Y$ under the torus action of $v$ as a subvariety of $X\times \PP^1$. Define $N(K)_{\R}:= \{(x+tv,t)\mid x\in N(Y)_{\R}$ and $t\in \R\}$ and $N(K)$ the saturated integral lattice $N(K)_{\R} \cap N(L)$. Let $T_K \subseteq T_L$ be the corresponding subtorus. The closure of $T_K$ defines a subvariety $K$ of $L$. By toric geometry, the preimage subscheme $\alpha|_{T_K}^{-1}(1)$ is isomorphic to $T_Y$ and $\alpha|_K^{-1}(1)$ is isomorphic to $Y$.

Let $\Sigma(K)$ be the fan with lattice $N(K)$ and cones $\delta \cap N(K)_{\RR}$ for $\delta \in \Sigma(L)$. Let $\widetilde{K}$ be the toric variety associated to $(\Sigma(K),N(K))$. Then $K$ is the image of $\widetilde{K}$ under the proper toric morphism associated to the lattice morphism $\beta_N: N(K) \hookrightarrow N(L)$. Let $\alpha': \widetilde{K} \xrightarrow{\beta} L \xrightarrow{\alpha} \mathbb{P}^1$ be the induced projection to $\mathbb{P}^1$.

By \cite[Prop 3.1]{abramovich2020decomposition}, the subscheme  $(\alpha')^{-1}({0})$ satisfies the equation
\begin{equation*}
    [(\alpha')^{-1}(0)] = \sum_{\tau} m_\tau \cdot [V_{\widetilde{K}}(\tau)],
\end{equation*} 
where $\tau$ goes over the rays in $\Sigma(K)$, whose image under the $\RR$-linear map 
\begin{equation*}
\ol{\alpha}'_{\RR}: N(K)_{\RR} \xrightarrow{\beta_{N,\RR}} N(L)_{\RR} \xrightarrow{\ol{\alpha}_{\RR}} \RR
\end{equation*}
is $\RR_{\geq 0}$. The multiplicity $m_\tau$ is given by the image of the primitive generator of $\tau$ under $\ol{\alpha}'_{\RR}$. Since $\alpha$ is a flat dominant morphism, by the alternative definition of rational equivalence in \cite[\S 1.6]{IntersectionTheory}, we obtain a rational equivalence relation in the Chow group of $X$:
\begin{equation} \label{FSmiddleequation}
\begin{split}
    [Y] = [\alpha|_K^{-1}(1)] \sim [\alpha|_K^{-1}(0)] & = \beta'_* [(\alpha')^{-1}(0)]\\
    & = \sum_{\tau} m_\tau \cdot \beta'_* [V_{\widetilde{K}}(\tau)].
\end{split}
\end{equation}
Here $\beta': \wt{K}\rightarrow X$ is composition of the map $\beta: \wt{K}\rightarrow L$ with the projection $L\rightarrow X$. It is sufficient to show that the above equation \eqref{FSmiddleequation} is the same as equation \eqref{FSoriginal}. 

First, there is an one-to-one correspondence between rays $\tau$ and cones in $\Delta^0(v)$.  
Note each ray $\tau\in \Sigma(K)$ is the intersection of $\delta \times \R_{\geq 0}$ and $N(K)_{\RR}$ for a cone $\delta$ in $\Sigma(X)$. The preimage $\ol{\alpha}_{\RR}^{-1}(1) \cap \tau$ is a point $(x+v,1)$ in  $\delta_{\RR} \times \R_{\geq 0}$ for some $x\in N(Y)_{\R}$. Hence $\{(N(Y)_{\R}+v) \cap \delta\}$ is non-empty. By the genericity of $v$, there is only one intersection point. It follows that the cone $\delta$ lies in $\Delta^0(v)$. On the other hand, for every $\delta$ in $\Delta^0(v)$, the intersection of $\delta \times \RR_{\geq 0}$ and $N(K)_{\RR}$ is a ray with image $\RR_{\geq 0}$ under $\ol{\alpha}'_{\RR}$.

Next, we need to show the corresponding multiplicities are the same. Under the proper morphism $\beta: \wt{K}\rightarrow K$, the image of $V_{\widetilde{K}}(\tau)$ is $V_L(\delta\times \RR_{\geq 0})$ up to a multiplicity. The multiplicity is decided by the degree of the finite map of the open toric strata from $O_{\widetilde{K}}(\tau)$ to $O_L(\delta\times \RR_{\geq 0})$, that is, the lattice index
\begin{equation*}
\begin{split}
& [N(L)/(N_{\delta} \times \ZZ): N(K)/N_\tau]\\  = & [N(L)/(N_{\delta}\times \ZZ): N(K)/((N_{\delta}\times \ZZ) \cap N(K))] \\  = & [N(L)/(N_{\delta}\times \ZZ):(N_{\delta}\times \ZZ+N(K))/(N_{\delta}\times \ZZ)]\\
= & [N(L):N_{\delta}\times \ZZ+N(K)].
\end{split}
\end{equation*}
Thus we have 
\begin{equation} \label{eq1}
   \begin{split}
       \beta'_* [V_{\widetilde{K}}(\tau)] =[N(L):N_\delta\times \ZZ + N(K)] \cdot [V_X(\delta)].
\end{split}
\end{equation}

The sublattice
\begin{equation*}
\begin{split}
    N_{\delta}\times \ZZ +N(K) & = N_{\delta}\times \ZZ + N(Y)\times \{0\} + \Z\cdot(v,1)\\
    & = N_\delta \times \Z + N(Y)\times \{0\} + \Z\cdot (v,0) \\
    & = (N_\delta+N(Y))\times \Z + \Z\cdot (v,0),
\end{split}
\end{equation*}
with the second and the third equality following from the fact that $(0,1) \in N_{\delta}\times \ZZ$.

Write $v = \frac{a_1}{a_2}\cdot x + \frac{b_1}{b_2} \cdot y$, where $x\in N_\delta$, $y\in N(Y)$ and integers pairs $(a_1,a_2)$, $(b_1,b_2)$ are coprime. As $N_{\delta}$ and $N(Y)$ has complementary dimension in $N(X)$, such presentation of $v$ is unique. The lattice index
\begin{equation} \label{eq2}
\begin{split}
    & [(N_\delta \times \ZZ+ N(K)): (N_\delta+N(Y))\times \Z] \\  = & [(N_\delta+N(Y))\times \Z + (\ZZ\cdot v,0): (N_\delta+N(Y))\times \Z]\\
     = & [N_\delta+N(Y)+(\ZZ\cdot v,0):N_\delta+N(Y)]  = \operatorname{lcm}(a_2,b_2),
\end{split}
\end{equation}
where $\operatorname{lcm}(a_2,b_2)$ is the least common multiple of integers $a_2$ and $b_2$. The integral generator $v_\tau$ of the ray $N(K)_{\RR}\cap \delta$ has form
\begin{equation*}
\begin{split}
    v_\tau & = n\cdot (v,1) + (y',0) \\
    & = \Big(\frac{n\cdot a_1}{a_2}\cdot x+ \frac{n\cdot b_1}{b_2} \cdot y+y',n \Big)
\end{split}
\end{equation*}
for $y'\in N(Y)$ and $n\in \Z$. Since $v_{\tau}$ is integral, then $n$ is a multiple of $\operatorname{lcm}(a_2,b_2)$. As $n$ is the smallest integer such that $v_{\tau}$ is integral, then $n = \operatorname{lcm}(a_2,b_2)$. Therefore
\begin{equation*}
m_\tau = \operatorname{lcm}(a_2,b_2) = [N_\delta \times \ZZ+ N(K): (N_\delta+N(Y))\times \Z] .
\end{equation*} 
The multiplicity
\begin{equation*}
    \begin{split}
 & m_\tau \cdot [N(L):(N_\delta\times \ZZ + N(K))]\\
       = & [N_\delta\times \ZZ +N(K): (N_\delta+N(Y))\times \Z] \cdot [N(L):N_\delta \times \ZZ+N(K)] \\
        = & [N(L):(N_\delta + N(Y))\times \Z] \\
         = & [N(X): N_\delta+N(Y)] = m_{\delta}.
    \end{split}
\end{equation*}
Then by \eqref{FSmiddleequation} and \eqref{eq1}, we obtain
\begin{equation*}
[Y] = \sum_{\tau} m_{\tau}\cdot \beta'_*[V_{\wt{K}}(\tau)] = \sum_{\delta\in \Delta^0(v)} m_{\delta} \cdot [V_X(\delta)].
\end{equation*}
\end{proof}

{\color{black}
\begin{example}
Let $m$ be a non-negative integer. Let $X$ be the Hirzebruch surface $F_m$, whose fan $\Sigma_X$ in $\ZZ^2$ contains four rays $r_1,...,r_4$ with directions $(1,0), (0,1), (-1,m)$ and $(0,-1)$. Let $N(Y)$ be one dimensional sublattice generated by $v_Y = (1,1)$. 

Suppose the generic displacement vector $v = (1,0)$, then $\Delta^0(v)$ contains rays $r_1$ and $r_4$. Since the lattice generated by $v_Y$ and $r_1$, $v_Y$ and $r_4$ are both $\ZZ^2$, the multiplicities for both rays are $1$. Suppose we take the generic displacement vector $v = (-1,0)$, then $\Delta^0(v)$ contains rays $r_2$ and $r_3$. The multiplicity for $r_2$ is $1$ and the multiplicity for $r_3$ is $m+1$. We obtain that in $A_1(X)$,
\begin{equation*}
    [Y]=[V_X(r_1)] + [V_X(r_4)] =[V_X(r_2)] + (m+1)\cdot [V_X(r_3)].
\end{equation*} 
\end{example}}

We generalize Lemma \ref{FS} to morphisms of toric strata. Let $\wt{f}: Y \rightarrow X$ be a proper morphism of toric varieties associated to an injective map of lattices $f_N: N(Y) \rightarrow N(X)$. Let $\tau$ be a cone in $\Sigma(Y)$ and let $\tau'$ be the smallest cone in $\Sigma(X)$ that contains the image of $\tau \in \Sigma(Y)$. Let $f: V_Y(\tau) \rightarrow V_X(\tau')$ be the restriction of $\wt{f}$ on $V_Y(\tau)$. Assuming 
\begin{equation*}
\dim \tau = \dim (\tau' \cap f_N(N(Y))),    
\end{equation*}
in which case the lattice map $N(Y) / N_{\tau} \rightarrow N(X)/N_{\tau'}$ associated to the toric morphism $f$ is injective. We wish to study $f_*[V_Y(\tau)]$ using the same idea.

\begin{definition} rder to  \label{genericdefinition2}
A vector $v\in N(X)$ is \textit{generic with respect to $(X,Y,V_Y(\tau))$} if its image under the quotient map $q_X: N(X)\rightarrow N(X)/N_{\tau'}$ is generic with respect to the pair $(V_X(\tau'), f(V_Y(\tau)))$ as defined in Definition \ref{genericvector}.

Similarly, we define $\Delta^{\tau}(v)$ to be the collection of cones $\delta$ in $\Sigma(X)$ satisfying that  
\begin{enumerate}
    \item $\tau' \subseteq \delta$,
    \item $\dim{N_\delta} = \dim N(X) - \dim N(Y) + \dim \tau$, and 
    \item $(f_N(N(Y))_{\RR}+v) \cap \delta$ is not empty.
\end{enumerate}
 
\end{definition}

\begin{proposition} \label{FSfortoricstrata}
In the Chow group $A_l(V_X(\tau'))$:
\begin{equation} \label{keyequation1}
    f_*[V_Y(\tau)] = \sum_{\delta \in \Delta^{\tau}(v)} m(\delta) \cdot [V_{X}(\delta)],
\end{equation}
where $l = {\dim N(Y)-\dim \tau}$ and $m(\delta) = [N(X): f_N(N(Y))+N_{\delta}]$. 
\end{proposition}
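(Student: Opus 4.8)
The plan is to reduce Proposition \ref{FSfortoricstrata} to Lemma \ref{FS} by passing to the closed toric substack $V_X(\tau')$ and analyzing the map $f$ there. First I would recall that since $\tau'$ is the smallest cone of $\Sigma(X)$ containing $f_N(\tau)$, the morphism $\wt f$ restricts to a proper toric morphism $f : V_Y(\tau) \to V_X(\tau')$ whose associated lattice map is the composite $\ol f_N : N(Y)/N_\tau \to N(X)/N_{\tau'}$ induced by $f_N$. Write $\ol N(Y) = N(Y)/N_\tau$, $\ol N(X) = N(X)/N_{\tau'}$, and let $L = \im(\ol f_N) \subseteq \ol N(X)$; note that $\dim L = \dim N(Y) - \dim\tau - \dim(\ker \ol f_N)$, and one checks $\dim(\ker \ol f_N) = \dim(f_N(N(Y)) \cap N_{\tau'}) - (\text{correction from }\tau)$ — the bookkeeping here is routine but must be done carefully to match the dimension constraint (2) in Definition \ref{genericdefinition2}. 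The key structural fact is that the scheme-theoretic image $f(V_Y(\tau))$ inside $V_X(\tau')$ is, up to a multiplicity equal to the degree of $V_Y(\tau) \to \overline{f(V_Y(\tau))}$ (i.e. $[\,\ol N(Y) : \ker \ol f_N + (\ol f_N)^{-1}(L^{\sat})\,]$-type index, which I will compute to be $1$ after replacing $N(Y)$ by its relevant saturation — or else carry the multiplicity), the closure $Z$ of the subtorus $T_L \subseteq T_{V_X(\tau')}$ associated to the saturation of $L$ in $\ol N(X)$.

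Next I would apply Lemma \ref{FS} to the pair $(V_X(\tau'), Z)$ with respect to the image $\ol v$ of $v$ under $N(X) \to \ol N(X)$, which is generic with respect to $(V_X(\tau'), Z)$ precisely by Definition \ref{genericdefinition2}. This gives
\begin{equation*}
[Z] = \sum_{\ol\delta \in \Delta(\ol v)} m(\ol\delta) \cdot [V_{V_X(\tau')}(\ol\delta)]
\end{equation*}
in $A_{\dim L}(V_X(\tau'))$, where the sum is over cones $\ol\delta$ of $\Sigma(V_X(\tau'))$ of complementary dimension to $Z$ meeting $N(Z)_\RR + \ol v$. Then I would translate back: cones $\ol\delta \in \Sigma(V_X(\tau'))$ correspond bijectively to cones $\delta \in \Sigma(X)$ with $\tau' \subseteq \delta$, the dimension condition on $\ol\delta$ is exactly condition (2), the nonemptiness condition is exactly condition (3), and $V_{V_X(\tau')}(\ol\delta) = V_X(\delta)$. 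The only remaining point is to verify that the multiplicities match: $m(\ol\delta) = [\ol N(X) : N_{\ol\delta} + N(Z)]$ must equal $m(\delta) = [N(X) : f_N(N(Y)) + N_\delta]$. This is a lattice-index computation using $N_{\ol\delta} = (N_\delta + N_{\tau'})/N_{\tau'}$, $N(Z) = L^{\sat}$, $L + N_{\tau'} \supseteq f_N(N(Y)) + N_{\tau'}$ up to finite index, and the relation $N_{\tau'} \subseteq N_\delta$; the indices from passing to saturations cancel against the degree corrections from $f$, which is why one gets $m(\delta)$ on the nose.

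I expect the main obstacle to be the careful reconciliation of multiplicities and saturations — specifically, keeping track of the degree of the finite surjection $V_Y(\tau) \to \overline{f(V_Y(\tau))}$ versus the difference between $L$ and its saturation $L^{\sat}$, and confirming these exactly compensate so that the final coefficient is $[N(X) : f_N(N(Y)) + N_\delta]$ rather than that index times a spurious factor. A secondary technical point is justifying that $f$ is proper and that $f_*[V_Y(\tau)]$ is computed by this image-with-multiplicity (using, e.g., \cite[\S 1.4]{IntersectionTheory} on pushforward of cycles), and that the dimension $l = \dim N(Y) - \dim\tau$ in the target Chow group is consistent with $\dim L$ only after we account for $\ker \ol f_N$ — so the class $f_*[V_Y(\tau)]$ genuinely lands in $A_l(V_X(\tau'))$ as claimed (when $\ker \ol f_N \neq 0$ the pushforward vanishes and both sides are zero, a degenerate case worth noting separately).
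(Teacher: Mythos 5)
Your proof follows essentially the same route as the paper's: pass to the quotient lattice $N(X)/N_{\tau'}$, identify $f(V_Y(\tau))$ as a multiple of the closure $Z$ of a subtorus in $V_X(\tau')$, apply Lemma \ref{FS} to $(V_X(\tau'),Z)$ with the image vector $q(v)$, translate cones $\ol\delta \in \Sigma(V_X(\tau'))$ back to cones $\delta \supseteq \tau'$ in $\Sigma(X)$, and reconcile the degree of $f$ with the saturation index to recover $m(\delta)$. Your intermediate formula for that degree is garbled (as written the index is always $1$; the correct degree is $[L^{\sat}:L] = [N':q(f_N(N(Y)))]$), but since you explicitly plan to carry the multiplicity and verify the cancellation against the Lemma \ref{FS} coefficient, the structure of the argument is exactly the paper's and the slip is a bookkeeping point rather than a gap.
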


\begin{proof}
Let $q_X: N(X)\rightarrow N(X)/N_{\tau'}$ be the quotient of the lattice. Let $N'$ be the saturation of the image $q_X(f_N(N(Y)))$ in $N(X)/N_{\tau'}$. As the lattice map $N(Y) / N_{\tau} \rightarrow N(X)/N_{\tau'}$ is injective, the image of $V_Y(\tau)$ under $f$ is the closure of $T_{N'} \subseteq T_{N(X)/N_{\tau'}}$ inside $V_X(\tau')$. The degree of the map is the degree of the cover of torus induced from the saturation $q_X(f_N(N(Y))) \rightarrow N'$. Therefore,
\begin{equation} \label{aaequation}
\begin{split}
    f_*[V_Y(\tau)]  = [N':q_X(f_N(N(Y)))] \cdot [f(V_Y(\tau))].
\end{split}
\end{equation}

We first apply Lemma \ref{FS} to study $[f(V_Y(\tau))]$. By definition, the vector $q_X(v)$ is generic with respect to $(V_X(\tau'), f(V_Y(\tau)))$. Hence, in the Chow group of $V_X(\tau')$,
\begin{equation} \label{quotientFSequation}
    [f(V_Y(\tau))] = \sum_{\omega \in \Delta^0(q_X(v))} [N(X)/N_{\tau'}: N'+N_{\omega}] \cdot [V_{V_X(\tau')}(\omega)].
\end{equation}

Let us first show that a cone $\omega \in \Delta^0(q_X(v))$ if and only if $\delta \in \Delta^{\tau}(v)$ for $\delta$ the unique cone containing $\tau'$ and $q_X(\delta) = \omega$. Note that the intersection $\omega \cap (N'_{\RR}+q_X(v))$ is not empty if and only if the preimage of it under $q_X$ is not empty. That is, the intersection of $\delta+N_{\tau',\RR}$ with $f_N(N(Y))_{\RR}+N_{\tau',\RR}+v$ is not empty. It is equivalent to that the vector $v$ lies in $\delta+f_N(N(Y))_{\RR}+N_{\tau',\RR}$. We claim that 
\begin{equation}\label{latticeequa}
\begin{split}
\delta+f_N(N(Y))_{\RR}+N_{\tau',\RR} & = \delta+f_N(N(Y))_{\RR}.
\end{split}
\end{equation} 
{\color{black}
Let $w\in f_N(N(Y))_{\RR}$ such that $-w$ lies in the interior of $\tau'$. Then, for any $v\in N_{\tau',\RR}$, by taking an integer $I$ large enough, the vector $w': = -w+\frac{v}{I}$ lies in the interior of $\tau'$. We have $v = I\cdot w'+I \cdot w$. Hence, the vector $v$ lies in $\delta+f_N(N(Y))_{\RR}$, and the equality in equation \eqref{latticeequa} follows. Therefore, the condition $(3)$ of Definition \ref{genericdefinition2} that $(f_N(N(Y))_{\RR}+v) \cap \delta$ is not empty is equivalent to $\omega \cap (N_{\RR}'+q_X(v))$ being non-empty.}

For the dimension condition, as $\tau' \subseteq \delta$,
\begin{equation*}
\begin{split}
	\dim \omega = 
    \dim q_X(N_{\delta}) = \dim N_\delta - \dim N_{\tau'}.
\end{split}
\end{equation*}
Then 
\begin{equation*}
\begin{split}
\dim \omega = \dim q_X(N(X)) -\dim q_X(f_N(N(Y)))
\end{split}
\end{equation*}
if and only if
\begin{equation*}
\begin{split}
\dim N_{\delta} & = \dim N_{\tau'} + \dim q_X(N(X))- \dim q_X(f_N(N(Y)))\\
& = \dim N(X) - \dim q_X(f_N(N(Y))) \\
& = \dim N(X) - \dim f_N(N(Y)) + \dim (f_N(N(Y))\cap N_{\tau'})\\
& = \dim N(X) - \dim N(Y) + \dim \tau.
\end{split}
\end{equation*}
Hence $\omega \in \Delta^0(q_X(v))$ if and only if $\delta\in \Delta^{\tau}(v)$. 

Note that $V_X(\delta) = V_{V_X(\tau')}(\omega)$ by definition. The equation \eqref{quotientFSequation} is then equivalent to 
\begin{equation} \label{compareequation}
    [f(V_Y(\tau))] = \sum_{\delta \in \Delta^{\tau}(v)} [N(X)/N_{\tau'}: N'+q_X(N_\delta)] \cdot [V_X(\delta)].
\end{equation} Together with the equation \eqref{aaequation}, we obtain that
\begin{equation} \label{bbequation}
f_*[V_Y(\tau)]  = \sum_{\delta \in \Delta^{\tau}(v)}[N':q_X(f_N(N(Y)))][N(X)/N_{\tau'}: N'+q_X(N_\delta))] \cdot [V_X(\delta)].
\end{equation}

Note $N'$ and $q_X(N_{\delta})$ have complementary dimensions in the lattice $N(X)/N_{\tau'}$, and the intersection of $q_X(f_N(N(Y)))$ and $q_X(N_\delta)$ is zero dimensional. Thus,
\begin{equation} \label{somecalcu}
\begin{split}
[N'+q_X(N_{\delta}): q_X(f_N(N(Y)))+q_X(N_{\delta})] = [N':q_X(f_N(N(Y)))],
\end{split}
\end{equation}
as both equal the lattice order of $q_X(f_N(N(Y))) /[q_X(f_N(N(Y))) \cap q_X(N_{\delta})]$ as a sublattice of $N'/(N' \cap q_X(N_{\delta}))$.
Since the quotient lattice
\begin{equation*}
\begin{split}
q_X(f_N(N(Y))) + q_X(N_{\delta}) & = q_X(f_N(N(Y))+N_{\delta}) \\& = (f_N(N(Y))+N_{\delta})/N_{\tau'},
\end{split}
\end{equation*}
the lattice index in the equation \eqref{bbequation} satisfies that
\begin{equation*}
\begin{split}
     & \quad [N':q_X(f_N(N(Y)))] \cdot [N(X)/N_{\tau'}: N'+q_X(N_\delta))]\\
     & \overset{\eqref{somecalcu}}{=} 
     [N(X)/N_{\tau'}: q_X(f_N(N(Y)))+ q_X(N_\delta)] \\
     & \overset{}{=} [N(X)/N_{\tau'}: (f_N(N(Y))+N_{\delta})/N_{\tau'}]\\
     & = [N(X):f_N(N(Y))+N_{\delta}].
\end{split}
\end{equation*}
We now finish the proof of the equation \eqref{keyequation1}.

\end{proof}

\begin{corollary} \label{keyquotientfs}

With the same assumption in Proposition \ref{FSfortoricstrata}, let $N_Q$ be a sublattice of $N(Y)$. The subtorus $T_Q\subseteq T_Y$ induces a $T_Q$-action on $V_Y(\tau)$ and $V_X(\tau')$. The morphism $f: V_Y(\tau) \rightarrow V_X(\tau')$ is $T_Q$-equivariant. 

Let $f_Q:[V_Y(\tau)/T_Q]\rightarrow [V_X(\tau')/T_Q]$ be the induced map on the quotient stacks. Let $v$ be a generic displacement vector with respect to $(X,Y,V_Y(\tau))$ as defined in Definition \ref{genericdefinition2}. Then there is a closed substack of $[V_X(\tau')/T_Q]\times \PP^1$ which induces the rational equivalence of $[f_Q([V_Y(\tau)/T_Q])]$ and \begin{equation*}
\sum_{\delta \in \Delta^{\tau}(v)} \frac{m(\delta)}{[\im (q_X\circ f_N)^{\sat}:\im (q_X\circ f_N)]} \cdot [V_X(\delta)/T_Q],
\end{equation*}
where $\im(q_X\circ f_N)^{\sat}$ is the saturation of sublattice $\im(q_X\circ f_N)$ in $N(X)/N_{\tau'}$  and 
\begin{equation*}
m(\delta) = [N(X): f_N(N(Y))+N_{\delta}].
\end{equation*}

In the Chow group $A_l([V_X(\tau')/T_Q])$,
\begin{equation} \label{fsforstack}
    f_{Q*}[V_Y(\tau)/T_Q] = \sum_{\delta \in \Delta^{\tau}(v)} m(\delta) \cdot [V_{X}(\delta)/T_Q],
\end{equation}
where $l = \dim Y - \dim \tau - \dim N_Q$.

\end{corollary}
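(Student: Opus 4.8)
The plan is to carry out the toric degeneration underlying the proofs of Lemma~\ref{FS} and Proposition~\ref{FSfortoricstrata} $T_Q$-equivariantly and then descend to the quotients by $T_Q$. Recall that $T_Q$ acts on $X$, on $V_X(\tau')$, and on $L = V_X(\tau')\times\PP^1$ (trivially on the $\PP^1$-factor) through the lattice map $N_Q\hookrightarrow N(Y)\xrightarrow{f_N} N(X)\xrightarrow{q} N(X)/N_{\tau'}$. Since $N_Q\subseteq N(Y)$ one has $f_N(N_Q)_{\RR}\subseteq f_N(N(Y))_{\RR}\subseteq N(K)_{\RR}$, so $T_Q$ preserves the subtorus $T_{N(K)}$, its closure $K\subseteq L$, the toric resolution $\wt K\to K$, and is compatible with $\alpha'\colon\wt K\to\PP^1$; moreover the rational function $(\alpha')^{*}t$ realizing the equivalence between the fibres over $0$ and $1$ is $T_Q$-invariant. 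Thus the entire construction in the proof of Lemma~\ref{FS}, applied as in the proof of Proposition~\ref{FSfortoricstrata} to the pair $(V_X(\tau'), f(V_Y(\tau)))$ with generic vector $q(v)$, is $T_Q$-equivariant.

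Next I would descend. Taking the quotient by $T_Q$, the image of $[\wt K/T_Q]\to [V_X(\tau')/T_Q]\times\PP^1$ is the closed substack $[K/T_Q]$, and the descended rational function witnesses on $[V_X(\tau')/T_Q]$ a rational equivalence between the pushforward of the fibre over $1$, which is $[f(V_Y(\tau))/T_Q]$, and the pushforward of the fibre over $0$, a sum over the rays of $\Sigma(K)$, i.e. over $\delta\in\Delta^{\tau}(v)$ under the correspondence in the proof of Proposition~\ref{FSfortoricstrata}. The coefficient of $V_X(\delta)$ is a product of the speed of the corresponding ray (unchanged, since $T_Q$ acts trivially on $\PP^1$) and the degree of a surjection of open torus orbits; as $T_Q$ acts on each such orbit through a sublattice, by free translation with trivial generic stabilizer, these degrees agree with the variety computation, so the coefficient is $[N(X)/N_{\tau'}:N'+q(N_\delta)] = m(\delta)/[\im(q\circ f_N)^{\sat}:\im(q\circ f_N)]$ by the index identities at the end of the proof of Proposition~\ref{FSfortoricstrata}. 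This is precisely the asserted rational equivalence, with $[K/T_Q]$ the required substack.

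Finally I would pass from the image class to the pushforward. As in \eqref{aaequation}, $f$ is generically finite of degree $[N':q(f_N(N(Y)))] = [\im(q\circ f_N)^{\sat}:\im(q\circ f_N)]$ onto $f(V_Y(\tau))$; the induced stack map $f_Q\colon[V_Y(\tau)/T_Q]\to[f(V_Y(\tau))/T_Q]$ has the same degree, because $T_Q$ acts with trivial generic stabilizer on the open orbits of both — here one uses that $\ol f\colon N(Y)/N_\tau\xrightarrow{\sim}q(f_N(N(Y)))$ is an isomorphism, $N_\tau$ being saturated and $\dim f(V_Y(\tau))=\dim V_Y(\tau)$, the hypotheses implicit in Proposition~\ref{FSfortoricstrata} — so quotienting by $T_Q$ leaves the degree unchanged. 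Hence $f_{Q*}[V_Y(\tau)/T_Q] = [\im(q\circ f_N)^{\sat}:\im(q\circ f_N)]\cdot[f(V_Y(\tau))/T_Q] = \sum_{\delta\in\Delta^{\tau}(v)} m(\delta)\cdot[V_X(\delta)/T_Q]$ in $A_l([V_X(\tau')/T_Q])$, where the dimension $l=\dim Y-\dim\tau-\dim N_Q$ follows from $\dim[V_Y(\tau)/T_Q]=\dim V_Y(\tau)-\dim N_Q$ and generic finiteness. One may also verify the bare Chow identity by flat pullback along the smooth atlas $V_X(\tau')\to[V_X(\tau')/T_Q]$, where it reduces to Proposition~\ref{FSfortoricstrata} verbatim, but the explicit substack is needed for the first assertion.

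The step I expect to be the main obstacle is the multiplicity bookkeeping under the quotient: one must check that passing to $[-/T_Q]$ introduces nothing beyond the single saturation index above, i.e. that $T_Q$ acts with the same finite generic stabilizer on $V_Y(\tau)$, on $f(V_Y(\tau))$, and on every $V_X(\delta)$ with $\delta\in\Delta^{\tau}(v)$. This is exactly where the genericity of $v$ and the dimension condition of Definition~\ref{genericdefinition2} enter: they force $q(f_N(N(Y)))$ and $q(N_\delta)$ to span complementary subspaces of $N(X)/N_{\tau'}$, so that $T_Q$ meets $N_\delta$ only in finite index and acts, up to a finite ambiguity common to source and target, by a free translation. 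With that in hand, the remainder is a routine equivariant upgrade of the arguments in Section~\ref{fssection}.
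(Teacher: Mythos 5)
Your proposal follows essentially the same route as the paper: both apply the construction of Lemma~\ref{FS} to the pair $(V_X(\tau'), f(V_Y(\tau)))$, observe that the resulting closure $K\subseteq V_X(\tau')\times\PP^1$ and the projection to $\PP^1$ are $T_Q$-invariant, take $[K/T_Q]$ as the closed substack giving the rational equivalence, and finally rescale by $[N':q(f_N(N(Y)))]$ as in equation~\eqref{aaequation} to pass from the image class to the pushforward. One small caveat: your parenthetical claim that $\ol f\colon N(Y)/N_\tau\to q(f_N(N(Y)))$ is an isomorphism is neither assumed in Proposition~\ref{FSfortoricstrata} nor needed here — the degree of $f_Q$ equals that of $f$ simply because equivariant pushforward of classes along a $T_Q$-equivariant generically finite map scales by the same degree, irrespective of stabilizers — but this does not affect the correctness of your argument.
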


\begin{proof}
Let $N'$ be the saturation of $f_N(N(Y))$ in $N(X)$. Then $q_X(N')$ is saturated in $N(X)/N_{\tau'}$. We first show that 
\begin{equation} \label{hahaequation}
[f_Q([V_Y(\tau)/T_Q])] = \sum_{\delta \in \Delta^{\tau}(v)} [N(X)/N_{\tau'}:q_X(N')+q_X(N_{\delta})] \cdot [V_X(\delta)/T_Q]
\end{equation}
similar to the equation \eqref{compareequation} in the toric variety case.

In the toric subvariety $V_X(\tau')$, the closure of the torus associated to $N'$ is $f(V_Y(\tau))$. Let $v'$ be the vector $q_X(v)$ in $N(X)/N_{\tau'}$. By Lemma \ref{FS}, there is a closed subvariety $K$ in $V_X(\tau')\times \mathbb{P}^1$ defined to be the closure of the torus associated to the subspace 
\begin{equation*}
\{(x+t\cdot v',t) \mbox{ }|\mbox{ } x\in N(X)_{\RR}/N_{\tau',\RR} \mbox{ and } t\in \R\},
\end{equation*}
such that the projection map $\alpha: K \rightarrow \mathbb{P}^1$ induces the rational equivalence of $[f(V_Y(\tau))]$ and 
\begin{equation*}
\sum_{\delta \in \Delta^{\tau}(v)} [N(X)/N_{\tau'}:q_X(N')+q_X(N_{\delta})] \cdot [V_X(\delta)].
\end{equation*}
The inclusion of lattices 
\begin{equation*}
\begin{split}
N_Q \times \{0\}\hookrightarrow N_Q\times \ZZ \hookrightarrow N(Y)\times \ZZ \xrightarrow{f_N\times \id} N(X)\times \ZZ \xrightarrow{q_X\times \id} N(X)/N_{\tau'}\times \ZZ
\end{split}
\end{equation*}
induces a $T_Q$-action on $V_X(\tau')\times \PP^1$. An easy lattice computation tells us that $K$ is invariant under the $T_Q$-action and each fiber of $\alpha$ is $T_Q$-invariant. Therefore, the closed substack $[K/T_Q]$ together with the dominant morphism $\alpha': [K/T_Q] \rightarrow \mathbb{P}^1$ satisfies the equations
\begin{equation*}
    \begin{split}
        [\alpha'^{-1}(1)] & = [f(V_Y(\tau))/T_Q] = [f_Q([V_Y(\tau)/T_Q])], \\
        [\alpha'^{-1}(0)] & = \sum_{\delta \in \Delta^{\tau}(v)} [N(X)/N_{\tau'}:q_X(N')+q_X(N_{\delta})] \cdot [V_X(\delta)/T_Q].
    \end{split}
\end{equation*}
Hence this induces the rational equivalence of $[f(V_Y(\tau))/T_Q]$ and \begin{equation*}
\sum_{\delta \in \Delta^{\tau}(v)} [N(X)/N_{\tau'}:q_X(N')+q_X(N_{\delta})] \cdot [V_X(\delta)/T_Q].
\end{equation*}

In Proposition \ref{FSfortoricstrata}, we showed that the equation \eqref{compareequation} induces that
\begin{equation*}
    f_*[V_Y(\tau)] = \sum_{\delta \in \Delta^{\tau}(v)} m(\delta) \cdot [V_{X}(\delta)].
\end{equation*}
With the same argument, the equation \eqref{hahaequation} induces that
\begin{equation*}
\begin{split}
     f_{Q*}[V_Y(\tau)/T_Q] & = [N':f_N(N(Y))]\cdot [f_Q([V_Y(\tau)/T_Q])] \\&  = \sum_{\delta \in \Delta^{\tau}(v)} [N(X): f_N(N(Y))+N_{\delta}] \cdot [V_X(\delta)/T_Q].  
\end{split}
\end{equation*}
\end{proof}

\section{Proof of the splitting formula} \label{proofofthemaintheorem}

From now on, let us assume the Assumption \ref{toricassumption} is satisfied. 
\begin{repAssumption}{toricassumption}
Assume $B = \spec(\kk\rightarrow Q_B)$ is  a log point with $Q_B$ a toric monoid. Suppose $X\rightarrow B$ is an integral, log smooth morphism between fine, saturated log schemems. Assume $\ol{\cM}_X$ is globally generated, and for each edge $p\in\bS$, the strict closed subscheme $V_p$ of the log scheme $X$ has the underlying scheme a toric variety, and the log stratification of $V_p$ is the same as the toric stratification.  
\end{repAssumption}

{\color{black}
\subsection{Toric Strata Assumption} \label{toricstrata}
In the following two results, we show that the gluing strata $V_p$ in the Assumption \ref{toricassumption} are isomorphic to toric strata of toric varieties. 

\begin{proposition} \label{Vp proposition}
Suppose there is a log point $B = \spec(\kk\rightarrow Q_B)$ with $Q_B$ a toric monoid, and a log morphism $X\rightarrow B$ satisfying Assumption \ref{toricassumption}. Then, for each logarithmic stratum $V_p$, there is an idealized log structure $\cK_p$ on $V_p$, such that $V_p$ is idealized log smooth.
\end{proposition}
\begin{proof}
As $B$ is a log point, we define an idealized structure on $B$ with ideal $K_B = Q_B\backslash\{0\}$. It determines an idealized structure on $X$ such that $X\rightarrow B$ is ideally strict. Following Construction \ref{constructionidealizedconstruction}, we construct an idealized structure $\cK_{V_p}$ on $V_p$, determined by the pullback of $\cK_X$ and the ideal sheaf of $V_p$ in $X$. By \cite[Variant IV.3.1.21]{LogAlgebraicGeometry}, the idealized log scheme $V_p$ is idealized log \etale over log scheme $X$, hence is idealized log smooth over $B$. As $B$ is idealized log smooth, the log stratum $V_p$ is idealized log smooth.
\end{proof}

\begin{theorem} \label{toricstratacondition}
Suppose $X$ is a fine, saturated idealized log smooth scheme with Zariski log structure $\cM_X$ and idealized structure $\cK_X$. Assume $\ol{\cM}_X$ is globally generated. Suppose further that $X$ satisfies the following conditions:
\begin{enumerate}
\item The underlying scheme $\ul{X}$ is a toric variety.

\item The log stratification of $X$ is the same as the toric stratification of $X$. In other words, for each point $x$ of $X$, let $\wt{\sigma}$ be the dual cone of $\ol{\cM}_{X,x}$, the underlying scheme of the logarithmic stratum $\ul{V_X(\wt{\sigma})}$ is the smallest closed toric stratum of $\ul{X}$ containing $x$.

\end{enumerate} 

Then, there exists a toric variety $Y$ and a cone $\wt{\sigma}_0\in \Sigma_{Y}$, such that $X$ is isomorphic to $V_{Y}(\wt{\sigma}_0)$ as an idealized log scheme. Here, the idealized log structure on $V_{Y}(\wt{\sigma_0})$ is the idealized structure of $V_{Y}(\wt{\sigma_0}) \rightarrow Y$ following Construction \ref{constructionidealizedconstruction}, where $Y$ has trivial idealized structure.

\end{theorem}

\begin{proof}
Let $Q_0 = \ol{\cM}_X(T)$ for $T$ the maximal torus of $X$. Let $N$ be the cocharacter lattice and $M$ be the character lattice of $\ul{X}$. We first construct the fan of $Y$ in the lattice $\wt{N} = N\times Q_0^*$ by constructing a cone $\wt{\sigma}$ in $\wt{N}$ for each cone $\sigma$ in fan $\Sigma$ of $X$.

Since $\cM_X|_T$ is a constant sheaf by assumption $(2)$, there is an isomorphism $s: \cM_X(T) \rightarrow \kk^{\times }\oplus M \oplus Q_0$. For each cone $\sigma \in \Sigma$, the restriction map $\chi_{\sigma}: \cM_X(U(\sigma)) \rightarrow \cM_X(T)$ determines a map 
\begin{equation*}
    \phi_{\sigma}: \cM_X(U(\sigma)) \xrightarrow{\chi_{\sigma}} \cM_X(T) \xrightarrow{s} \kk^{\times}\oplus M\oplus Q_0 \xrightarrow{\pr} M\oplus Q_0,
\end{equation*}
where $U(\sigma) = \spec \kk[\sigma^{\vee}\cap M]$. Define $\wt{\sigma}$ in $\wt{N}$ to be the dual cone of the image monoid $\im(\phi_{\sigma})$. For the zero cone $0\in \Sigma$, $\wt{\sigma}_0$ is simply $\{0\}\times Q_0^{\vee}$. Although the isomorphism $s$ is not canonical, different $s$ differs by a morphism $Q_0\rightarrow M$, which results in a linear transformation of $N\times Q_0^*$ of determinant $1$.  

We now show that the collection of cones $\wt{\sigma}$ forms a fan in $\wt{N}$. In the commutative diagram
\begin{equation*}
\begin{tikzcd}
& \cM_X(U(\sigma)) \arrow[r,"\alpha_{\sigma}"] \arrow[d] & \kk[P_{\sigma}] \arrow[d, hook] \\
M\oplus Q_0 \arrow[r,"i"] & \cM_X(T) \cong \kk^{\times} \oplus M \oplus Q_0 \arrow[r,"\alpha_T"] & \kk[M],
\end{tikzcd}
\end{equation*}
the image of $(\im \phi_{\sigma})$ under $\alpha_T\circ i$ factors through $\kk[P_{\sigma}]$. Hence, the subgroup $(\im \phi_{\sigma})^{\times}$ is contained in $(\im \phi_{\sigma})\cap (P_{\sigma}^{\times} \oplus \{0\})$. Furthermore, as the map $\kk[P_{\sigma}] \hookrightarrow \kk[M]$ is an inclusion map, the image of $\cM_X(U(\sigma))$ in $\kk[M]$ contains monomials $P_{\sigma}^{\times}$. Therefore, image $(\im \phi_{\sigma})$ contains $P_{\sigma}^{\times} \oplus \{0\}$.  The subgroup $(\im \phi_{\sigma})^{\times}$ equals $(\im \phi_{\sigma}) \cap (P_{\sigma}^{\times} \oplus  \{0\})$. For cones $\sigma \subseteq \tau$, there is a commutative diagram
\begin{equation*}
\begin{tikzcd}[row sep = small, column sep = small]
&(\im\phi_{\tau}) \arrow[rr] \arrow[dd,dashed] & & (\im \phi_{\tau}) / (\im \phi_{\tau})^{\times} \arrow[dd] \\
\cM_X(U(\tau)) \arrow[rr] \arrow[dd]\arrow[ru] & & \ol{\cM}_X(U(\tau)) \arrow[ru] \arrow[dd] \\
& (\im\phi_{\sigma}) \arrow[rr,dashed] &&
(\im \phi_{\sigma}) / (\im \phi_{\sigma})^{\times}\\
\cM_X(U(\sigma)) \arrow[ur,dashed] \arrow[rr] & & \ol{\cM}_X(U(\sigma)) \arrow[ur]
\end{tikzcd}
\end{equation*}
where the horizontal maps of the right face are the maps induced from left face, by taking the quotient of the units.   As $(\im \phi_{\tau})$ is the image of $\cM_X(U(\tau))$ in $(\im \phi_{\sigma})$, we have $(\im \phi_{\tau}) / ((\im \phi_{\tau})\cap(\im \phi_{\sigma})^{\times})$ is the image of $\cM_X(U(\tau))$ in $(\im \phi_{\sigma})/ (\im \phi_{\sigma})^{\times}$, which is the image of 
\begin{equation} \label{strictimage}
\ol{\cM}_X(U(\tau)) \rightarrow \ol{\cM}_X(U(\sigma)) \rightarrow (\im \phi_{\sigma})/ (\im \phi_{\sigma})^{\times}.
\end{equation}
Since $\ol{\cM}_X$ is globally generated, the first map in \eqref{strictimage} is surjective. On the other hand, as the map $\cM_X(U(\sigma)) \rightarrow (\im \phi_{\sigma})$ is surjective by definition, the map to $(\im \phi_{\sigma}) / (\im \phi_{\sigma})^{\times}$ is surjective. Hence the second map in \eqref{strictimage} is also surjective. So is the whole map \eqref{strictimage}. Therefore, the map $(\im \phi_{\tau}) / (\im \phi_{\tau})^{\times}$ to $(\im \phi_{\sigma}) / (\im \phi_{\sigma})^{\times}$ is surjective, defined by taking the quotient by $(\im \phi_{\tau})\cap (P_{\sigma}^{\times} \oplus \{0\})$. Dually, we then obtain the face inclusions of the dual cones $\wt{\tau} \hookrightarrow \wt{\sigma}$. The collection of cones forms a fan $\wt{\Sigma}$ in $\wt{N}$.

Let $Y$ be the toric variety of $(\wt{\Sigma}, \wt{N})$. Note in the quotient fan in $\wt{N}/(\{0\}\times Q_0^*)$, the affine subvariety associated to $(\im \phi_{\sigma})^{\vee}$ is determined by the image of monoid
\begin{equation*}
    (\im \phi_{\sigma}) \rightarrow M\oplus Q_0 \xrightarrow{p} \kk[M],
\end{equation*}
where $p$ sends $(m,0)$ to $z^m$ and $(m,q)$ to $0$ for $q\neq 0$. Let $F_{\sigma}$ be the face $(\im \phi_{\sigma})\cap (M\oplus \{0\})$. We claim that $p(F_{\sigma})$ is isomorphic to $P_{\sigma} = \sigma^{\vee} \cap M$. As a consequence, the underlying variety $\ul{V_Y(\wt{\sigma}_0)}$ is isomorphic to $\ul{X}$. 

{
\color{black}{
First, as there is a commutative diagram
\begin{equation*}
\begin{tikzcd}
 \cM_X(U(\sigma)) \arrow[d,"\chi_{\sigma}"] \arrow[r,"\alpha_{\sigma}"]  & \kk[P_{\sigma}] \arrow[d,hook]\\
\cM_X(T)  \arrow[r,"\alpha_T"] & \kk[M],
\end{tikzcd}
\end{equation*}
the image of $f_{\sigma} = \alpha_T \circ \chi_{\sigma}$ lies in $\kk[P_{\sigma}]$. Since the image of $\alpha_T$ in $\kk[M]$ are monomials, the image of $f_{\sigma}$ lies in $(\kk^{\times} \oplus P_{\sigma}) \cup \{0\}$, hence $p(F_{\sigma})$ is a subset of $P_{\sigma}$. In order to prove that $p(F_{\sigma})$ is surjective on $P_{\sigma}$, we need a local description of log scheme $X$ in terms of monoids $\im(\phi_{\sigma})$. Before that, let us show that the morphism  
\begin{equation*}
\ol{\phi}_{\sigma} : \ol{\cM}_X(U(\sigma)) \rightarrow \im(\phi_{\sigma}) / \im(\phi_{\sigma})^{\times}
\end{equation*}
is an isomorphism of monoids. By definition of $\phi_{\sigma}$, the morphism is surjective. Suppose $p \in \ol{\cM}_X(U(\sigma))$ is mapped to $0 \in \im(\phi_{\sigma}) / \im (\phi_{\sigma})^{\times}$, with $z^p\in \cM_X(U(\sigma))$ a lift of $p$. As the units $\im(\phi_{\sigma})^{\times} = \im(\phi_{\sigma}) \cap (P_{\sigma}^{\times} \oplus\{0\})$, we have 
\begin{equation*}
\alpha_\sigma(z^p)\in \kk^{\times}\oplus P_{\sigma}^{\times},
\end{equation*}
hence $z^p$ lies in $\cM_X(U(\sigma))^{\times}$ and $p=0$. The morphism $\ol{\phi}_{\sigma}$ is an isomorphism of monoids. Note that $X$ is idealized log smooth, with log structure on $U(\sigma)$ determined by $\alpha_\sigma$. Then, smooth locally $\spec \kk[P_{\sigma}]$ is isomorphic to the product of a smooth scheme with
\begin{equation*} 
\spec \kk[\ol{\cM}_X(U(\sigma))]/(\ol{\cK}_X(U(\sigma))) = \spec \kk[(\im \phi_{\sigma})/(\im \phi_{\sigma})^{\times}]/(K_{\sigma}),
\end{equation*} where $K_{\sigma}$ is the image of $\ol{\cK}_X(U(\sigma))$ under $\ol{\phi}_{\sigma}$. The ideal $\ol{\cK}_X(U(\sigma))$ contains the elements of $\ol{\cM}_X(U(\sigma))$ which lift to elements in $\cM_X(U(\sigma))$ whose image under $\alpha_\sigma$ is zero. It follows that the ideal
$K_{\sigma}$ is generated by the image of $(\im \phi_{\sigma}) \backslash F_{\sigma}$ under the quotient of $(\im \phi_{\sigma})^{\times}$.
Hence, smooth locally $\spec \kk[P_{\sigma}]$ is isomorphic to the product of a smooth scheme with
\begin{equation*}
\spec \kk[F_{\sigma}/F_{\sigma}^{\times}]  \cong \spec \kk[F_{\sigma}/(P_{\sigma}^{\times} \oplus \{0\}) \cap F_{\sigma}] \cong \spec \kk[p(F_{\sigma})/P_{\sigma}^{\times}].
\end{equation*}
Furthermore, since each toric stratum of $U(\sigma)$ is a logarithmic stratum, each non-invertible element $g\in P_{\sigma}$ determines a monomial whose vanishing set is the same as the vanishing set of an element in $\im(f_{\sigma})$. It induces that $n\cdot g \in p(F_{\sigma})$ for some integer $n$. As $\spec \kk[P_{\sigma}]$ is isomorphic to the product of a smooth scheme with $\kk[p(F_{\sigma})/P_{\sigma}^{\times}]$, it is only possible if $n=1$. Therefore, $ p(F_{\sigma})$ is isomorphic to $P_{\sigma} = \sigma^{\vee} \cap M$ and $\ul{V_Y(\wt{\sigma}_0)}$ is isomorphic to $\ul{X}$.

We finish the proof by showing the idealized log structures of $V_Y(\wt{\sigma}_0)$ is the same as the idealized log structure on $X$. For each toric affine subvariety $U(\sigma)$ with morphisms
\begin{equation*}
\cM_X(U(\sigma)) \rightarrow \im(\phi_{\sigma}) \xrightarrow{p} \kk[P_{\sigma}],
\end{equation*}
the idealized log structure as a subvariety of $V_Y(\wt{\sigma}_0)$ is induced by the second map and ideal $K_{\sigma} = (\im \phi_{\sigma})\backslash F_{\sigma}$. The log structure from $X$ is induced by the whole map. Since $\ol{\cM}_X(U(\sigma)) \rightarrow \im(\phi_{\sigma}) / \im(\phi_{\sigma})^{\times}$ is an isomorphism, and both idealized structures are obtained by the preimages of $0$, two idealized log structures are isomorphic. We now finish the proof that two idealized log structures are the same. 
}}

\end{proof}
}

Following the above discussion, for each gluing stratum $V_p = V_Y(\sigma)$, there exists a toric variety $X_p$ such that $V_p$ is a strict toric stratum of $X_p$. Let $(\Sigma_p,N_p)$ be the fan of $X_p$ and $\delta_p$ be the cone with $V_p = V_{X_p}(\delta_p)$. The log map $V_p\rightarrow B$ induces a lattice map $N_p\rightarrow Q_B^*$ dual of 
\begin{equation*}
    Q^{\gp}_B = \ol{\cM}^{\gp}_B(B) \rightarrow \ol{\cM}^{\gp}_X(T_p) = Q^{\gp}_0
    \hookrightarrow M\oplus Q^{\gp}_0,
\end{equation*}
for $T_p$ the maximal torus of $X_p$. Hence, there is a toric morphism $X_p\rightarrow \spec \kk[Q_B]$. The map $V_p\rightarrow B$ is the restriction of $X_p\rightarrow \spec \kk[Q_B]$ on $X$.

Recall the definitions of $X_{\btau}$ and $X_{\btau_i}$ in \eqref{Xtaudefinition}
\begin{equation*}
\begin{split}
X_{\btau}: = V_{p_1}\times_B^{\operatorname{fs}}...\times^{\operatorname{fs}}_B V_{p_{|\bS|}},\quad p_j\in \bS\\
X_{\btau_i}:= V_{p_1}\times^{\operatorname{fs}}_B\ldots\times^{\operatorname{fs}}_B V_{p_{|\bS_i|}}, \quad p_j\in \bS_i.
\end{split}
\end{equation*} 
The next proposition studies the structure of $X_{\btau}$ and $X_{\btau_i}$. By \cite{MolchoSam2021Ussr}, the fine, saturated fiber product of toric varieties is determined by the fiber product of fans, which is defined in Appendix \ref{fiberproductoffans}. As $X_{\btau}$ and $X_{\btau_i}$ are fiber products of toric strata, they are the subschemes of the fiber product of toric varieties. Though the ideal determining $X_{\btau}$ and $X_{\btau_i}$, which is generated by the pullback ideals from $V_p$, might not be radical, the reduction of them are well understood in terms of toric strata.

\begin{proposition}\label{toriclocallemma}
The fiber product $X_{\btau}$ is a disjoint union of log schemes and each of them is isomorphic to 
an irreducible, but possibly non-reduced subscheme $Z_{\btau}$ of the toric variety $X'_{\btau}$ with fan
\begin{equation*}
\Sigma_{\btau} = \Sigma_{p_1}\times_{\Sigma(B)}\ldots\times_{\Sigma(B)} \Sigma_{p_{|\bS|}},\quad p_j\in \bS,
\end{equation*}
and with the toric log structure. The reduction of $Z_{\btau}$ is $V_{X'_{\btau}}(\delta)$, with $\delta$ the fiber product of cones $\delta_{p}$ for $p\in \bS$ over $\delta_B$. 
 
Similarly, for each $i = 1,2,\ldots,r$, the fiber product $X_{\btau_i}$
is a disjoint union of log schemes and each of them is isomorphic to 
an irreducible, but possibly non-reduced subscheme $Z_{\btau_i}$ of the toric variety $X'_{\btau_i}$ with fan \begin{equation*}
\Sigma_{\btau_i}  = \Sigma_{p_1}\times_{\Sigma(B)}\ldots\times_{\Sigma(B)} \Sigma_{p_{|\bS_i|}},\quad p_j\in \bS_i,
\end{equation*}
with the toric log structures. The reduction of $Z_{\btau_i}$ is $V_{X'_{\btau_i}}(\delta_i)$, with $\delta_i$ the fiber product of $\delta_p$ for $p\in \bS_i$ over $\delta_B$.

The fiber product $X'_{\btau} \times_{\prod_{i=1}^r X'_{\btau_i}} \prod_{i=1}^r Z_{\btau_i}$ is $Z_{\btau}$. With $Z_{\btau_i} \rightarrow X_{\btau_i}$ being the embedding of one component, the fiber product $X_{\btau} \times_{\prod_{i=1}^r X_{\btau_i}} \prod_{i=1}^r Z_{\btau_i}$ is a disjoint union of $\cN$ schemes, each of which is isomorphic to $Z_{\btau}$. Here, 
\begin{equation*}
\cN = [L^{\sat}: L], \quad L = \im(\ol{\Delta}_p)+ \prod_{i=1}^r \im(\ol{\Delta}_{\btau_i}),
\end{equation*}
where $\ol{\Delta}_p$ is the diagonal map of $\prod_{p\in \bS} N_p \rightarrow \prod_{p\in \bS} N_p\times N_p$ and $\ol{\Delta}_{\btau_i}$ is the lattice projection from $\prod_{i=1}^r \Sigma_{\btau_i}$ to $\prod_{p\in \bS} N_p\times N_p$.
\end{proposition}

\begin{proof}
By Lemma \ref{molcholemma}, the fine, saturated log fiber product 
\begin{equation} \label{fXprime}
X_{p_1}\times_{S_B}^{\operatorname{fs}}\ldots\times_{S_B}^{\operatorname{fs}}X_{p_{|\bS|}} ,\quad p_j \in \bS
\end{equation}
is a disjoint union of log schemes, each of which is isomorphic to the toric variety $X'_{\btau}$ of the fiber product of fans $(\Sigma_{\btau}, N_{\btau})$, with its toric log structure. Let $I_{p_i}$ be the ideal sheaf of $X_{p_i}$ that defines $V_{p_i}$. The scheme $X_{\btau}$ is then the subscheme of \eqref{fXprime} generated by the pullback of $I_{p_i}$. For toric morphisms $X'_{\btau} \rightarrow X_p$ and a cone $\delta\in \Sigma(X')$, the image of $V_{X'_{\btau}}(\delta)$ is contained in $V_p$ if and only if the image of $\delta$ under the fan map $N_{\btau}\rightarrow N_p$ intersects with the interior of $\delta_p$. Hence, the reduction of $Z_{\btau}$ is determined by the minimal cones $\delta$ with image intersecting with the interior of $\delta_p$. Let $\delta = \delta_{p_1}\times_{\delta_B}\ldots\times_{\delta_B} \delta_{p_{|\bS|}}$. The maps $\delta_p\rightarrow \delta_B$ are surjective, following the integrality of $X$ over $B$. Thus $\delta$ is mapped to the interior of $\delta_p$ under the projection map, and is the minimal cone satisfying the conditions. Therefore, the ideal $I$ defines an irreducible subscheme $Z_{\btau}$ whose reduction is the toric strata $V_{X'_{\btau}}(\delta)$. The proof works the same for $\btau_i$.

The subscheme $Z_{\btau}$ and the fiber product $X'_{\btau} \times_{\prod_{i=1}^r X'_{\btau_i}} \prod_{i=1}^r Z_{\btau_i}$  are both the subschemes of $X_{\btau}'$ determined by the pullback of ideals $I_p$ under $X_{\btau}'\rightarrow \prod_p X_p$. Hence they are the same. For the last statement, by the following lemma \ref{Xtautoproductlemma}, the equation
\begin{equation*}
\begin{split}
X_{\btau}\times_{\big(\prod_{i=1}^r X_{\btau_i}\big)} \big(\prod_{i=1}^r Z_{\btau_i}\big)  = \prod_{p\in \bS}X_p\times_{\big(\prod_{p\in \bS}X_p\times X_p\big)} \big(\prod_{i=1}^r Z_{\btau_i}\big)
\end{split}
\end{equation*}
holds. By Lemma \ref{molcholemma}, the right side is the union of $Z_{\btau}$ with the number of the components $\cN$ being the lattice index 
\begin{equation*}
\begin{split}
\cN & = 
[(\im(\ol{\Delta}_p)+ \prod_{i=1}^r \im(\ol{\Delta}_{\btau_i}))^{\sat}: \im(\ol{\Delta}_p)+ \prod_{i=1}^r \im(\ol{\Delta}_{\btau_i})].
\end{split}
\end{equation*}

\end{proof}

\begin{lemma} \label{Xtautoproductlemma}
Assume the graph $G$ of $\btau$ is connected. There is a Cartesian diagram in the category of fine, saturated logarithmic schemes
\begin{equation*}
\begin{tikzcd}[row sep = small, column sep = small]
X_{\btau} \arrow[r] \arrow[d] & \prod_{i=1}^r X_{\btau_i} \arrow[d] \\
\prod_{p\in \bS} X_p \arrow[r,"\Delta_{p}"] & \prod_{p\in \bS} X_p\times X_p, 
\end{tikzcd}
\end{equation*}
with horizontal maps the diagonal maps and the vertical maps the composition of the projections 
\begin{equation*}
g_{\btau}: X_{\btau} \rightarrow \prod_{p\in \bS} V_p, \quad g_{\btau_i}: X_{\btau_i} \rightarrow \prod_{p\in \bS_i} V_p
\end{equation*}
with the closed embeddings $V_p \hookrightarrow X_p$.
\end{lemma}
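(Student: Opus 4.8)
The plan is to verify the Cartesian property directly on the functor of points over the category of fine, saturated logarithmic schemes, using that a fiber product in this category represents the corresponding limit functor. First I would record the combinatorics of the splitting: each edge $p\in\bS$ is cut into two legs, call them $p^{+}$ and $p^{-}$, lying in the subtypes $\btau_i,\btau_j$ containing the two endpoints of $p$ (with $i=j$ permitted, when $p$ lies on a cycle). Hence $\bigsqcup_{i=1}^{r}\bS_i$ is canonically $\bS\times\{+,-\}$, and $\prod_{i}\prod_{p'\in\bS_i}X_{p'}$ is canonically $\prod_{p\in\bS}(X_p\times X_p)$ with $X_{p^{+}}=X_{p^{-}}=X_p$; this is what makes the bottom row of the claimed square meaningful, and it identifies the right-hand vertical map with $\prod_i g_{\btau_i}$ followed by the closed immersion $\prod_p(V_p\times V_p)\hookrightarrow\prod_p(X_p\times X_p)$.

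Next I would unwind both sides. For a fine, saturated log scheme $T$, a $T$-point of $X_{\btau}$ is a family $(f_p\colon T\to V_p)_{p\in\bS}$ whose composites $T\to V_p\to B$ all agree; a $T$-point of $\prod_{i}X_{\btau_i}$ is, for each $i$, a family $(g_{p'}\colon T\to V_{p'})_{p'\in\bS_i}$ whose composites to $B$ agree within each $i$ separately. Forming the fs fiber product $\prod_{i}X_{\btau_i}\times^{\fs}_{\prod_p(X_p\times X_p)}\prod_pX_p$ along the product of diagonals, a $T$-point consists of such a system $(g_{p'})$ together with a point of $\prod_pX_p$ whose image under the diagonals matches the image of $(g_{p'})$, namely $\big(g_{p^{+}},g_{p^{-}}\big)_{p\in\bS}$ in $\prod_p(X_p\times X_p)$. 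Since each $V_p\hookrightarrow X_p$ is a strict closed immersion, hence a monomorphism, this forces $g_{p^{+}}=g_{p^{-}}$ as maps $T\to V_p$ for every $p\in\bS$ and makes the auxiliary point of $\prod_pX_p$ redundant. So this fiber product represents the functor of systems $(g_{p'})$ that are $B$-compatible within each component and agree along every split edge.

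Finally I would compare the two functors. From $(f_p)_{p\in\bS}$ one obtains $g_{p'}:=f_p$ for each half $p'$ of each $p$, which is $B$-compatible within every component and agrees along split edges. Conversely, from a system $(g_{p'})$ agreeing along split edges one sets $f_p:=g_{p^{+}}=g_{p^{-}}\colon T\to V_p$; the only point to check is that all composites $T\to V_p\to B$ agree, and this is exactly where connectedness of $G$ enters. The auxiliary graph with a vertex for each $\btau_i$ and an edge for each split edge $p\in\bS$ (joining the components meeting $p$) is connected, so any two components are linked by a chain of split edges; the composites to $B$ agree within each component and match across each shared stratum $V_p$, hence agree globally. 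These two assignments are mutually inverse, and they are compatible with the two structure maps $X_{\btau}\to\prod_iX_{\btau_i}$, $X_{\btau}\to\prod_pX_p$ and the projections of the fiber product, so $X_{\btau}$ together with those maps is the fs fiber product and the square is Cartesian. I expect the only real obstacle to be this bookkeeping — tracking the identification of legs in $\bS_i$ with half-edges of $\bS$, especially in the cyclic case, and invoking connectedness of $G$ to upgrade component-wise agreement to global agreement over $B$; everything else is a formal use of the universal property of fs fiber products together with the fact that $V_p\hookrightarrow X_p$ is a monomorphism.
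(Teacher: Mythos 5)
Your proof is correct and follows essentially the same approach as the paper's. The paper verifies the universal property by explicitly constructing the induced morphism $Z\to X_{\btau}$ and using connectedness of $G$ to show $\big(\prod_{i=1}^r B\big)\times_{(\prod_{p\in\bS}B\times B)}\big(\prod_{p\in\bS}B\big)=B$; your functor-of-points unwinding is the same argument phrased slightly differently, resting on the same two key ingredients — that strict closed immersions $V_p\hookrightarrow X_p$ are monomorphisms so the auxiliary factor $\prod_p X_p$ imposes exactly the condition $g_{p^+}=g_{p^-}$, and that connectedness of $G$ (via the contracted graph on the components $\btau_i$) propagates the component-wise agreement over $B$ to a global one.
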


\begin{proof}
As $V_p \hookrightarrow X_p$ is a strict closed embedding, it is sufficient to show that the following diagram is Cartesian in the category of fine, saturated log schemes
\begin{equation} \label{inductiondiagram}
\begin{tikzcd}
X_{\btau} \arrow[r,"\Delta_X"] \arrow[d,"g_{\btau}"] & \prod_{i=1}^r X_{\btau_i} \arrow[d,"\prod g_{\btau_i}"] \\
\prod_{p\in \bS} V_p \arrow[r,"\Delta_p"] & \prod_{p\in \bS} V_p\times V_p.
\end{tikzcd}
\end{equation}
Let $Z$ be a fine, saturated log scheme with $\alpha: Z\rightarrow \prod_{i=1}^r X_{\btau_i}$ and $\beta: Z\rightarrow \prod_{p\in \bS} V_p$, such that $\prod_{i=1}^r g_{\btau_i}\circ \alpha = \Delta_p \circ \beta$ as logarithmic maps. Then, there is a commutative diagram
\begin{equation*}
\begin{tikzcd}[row sep = small, column sep = small]
Z \arrow[r,"\alpha"] \arrow[d,"\beta"] & \prod_{i=1}^r X_{\btau_i} \arrow[d,"\prod g_{\btau_i}"] \arrow[r] & \prod_{i=1}^r B \arrow[dd]\\
\prod_{p\in \bS} V_p \arrow[d] \arrow[r,"\Delta_p"] & \prod_{p\in \bS} V_p\times V_p \arrow[rd]\\
\prod_{p\in \bS} B \arrow[rr] & & \prod_{p\in \bS}  B\times B.
\end{tikzcd}
\end{equation*}
By the universal property of the fiber products, there is a morphism
\begin{equation*}
Z\rightarrow \big(\prod_{i=1}^r B\big) \times_{\big(\prod_{p\in \bS} B\times B\big)} \big(\prod_{p\in \bS} B\big ).
\end{equation*}
As the dual graph $G$ of $\btau$ is connected, the pullback of $\prod_{i=1}^r B$ along the diagonal map identifies the base of each $X_{\btau_i}$. Thus
$\big(\prod_{i=1}^r B\big) \times_{\big(\prod_{p\in \bS} B\times B\big)} \big(\prod_{p\in \bS} B\big ) = B$, with the maps from $B$ to each factor being diagonal maps.  For each $p \in \bS$, the projection $Z\rightarrow V_p\rightarrow B$ is the same as the map 
\begin{equation*}
Z\rightarrow B \xrightarrow{\Delta_B}\prod_{p\in \bS} B \xrightarrow{\pr_p} B.
\end{equation*}
By the universal property of the logarithmic fiber products, there is a unique morphism 
\begin{equation*}
\psi: Z\rightarrow X_{\btau} = V_{p_1}\times_B\ldots\times_B V_{p_{|\bS|}},
\end{equation*}
with $g_{\btau}\circ \psi = \beta$. On the other hand, both $\alpha$ and $\Delta_X \circ \psi$ are the unique morphism induced from the universal property of the fiber product
\begin{equation*}
\prod_{i=1}^r X_{\btau_i} = \big (\prod_{i=1}^r B\big) \times_{\big (\prod_{p\in \bS} B\times B\big)} \big(\prod_{p\in \bS} V_p\times V_p\big).
\end{equation*}
Hence $\Delta_X\circ \psi = \alpha$ and we finish the proof of the diagram \eqref{inductiondiagram} being Cartesian in the category of log fine, saturated schemes.

\end{proof}

\subsection{Local Toric Models of the Gluing Formalism}\label{localmodelsubsection}
We are now ready to study the gluing formalism under the Assumption \ref{toricassumption}. We first study the local structure of the splitting morphism
\begin{equation} \label{delevsplit}
\delta_{\red}^{\ev}: \wt{\fM}_{\red}^{\gl,\ev} \rightarrow \prod_{i=1}^r\wt{\fM}^{\ev}_{\btau_i,\red}. 
\end{equation}
The main result of this section is Proposition \ref{localsplittingformula}, which provides a local splitting equation \eqref{localsplittingequation} of a geometric point after a base change to an \etale neighborhood. The idea is to analyze $\delta^{\ev}_{\red}$ under the following commutative diagram obtained from the fiber diagram \eqref{gluingfiberdiagram}
\begin{equation*} 
\begin{tikzcd}
\wt{\fM}^{\gl,\ev}_{\red} \arrow[r,"\delta_{\red}^{\ev}"] \arrow[d,"\ev"] & \prod_{i=1}^r \wt{\fM}_{\btau_i,\red}^{\ev} \arrow[d, "\prod \ev_{\tau_i}"] \\ X_{\btau} \arrow[r,"\Delta_X"] & \prod_{i=1}^r X_{\btau_i}.
\end{tikzcd}
\end{equation*}

Let us first construct the \etale base change for a geometric point $\ol{x}$ on $\wt{\fM}^{\gl,\ev}_{\red}$. Let \begin{equation*}
Q_{\ol{x}} = \ol{\cM}_{\prod \wt{\fM}_{\btau_i,\red}^{\ev},\delta_{\red}^{\ev}(\ol{x})}, \quad L_{\ol{x}} = \ol{\cK}_{\prod\wt{\fM}_{\btau_i,\red}^{\ev},\delta_{\red}^{\ev}(\ol{x})}.
\end{equation*}
By \cite[Appendix B.2]{ACGSPunc}, there is a connected strict \etale neighborhood $U_{\ol{x}}$ of the geometric point $\delta^{\ev}_{\red}(\ol{x})$ in $\prod_{i=1}^r \wt{\fM}_{\btau_i,\red}^{\ev}$ such that there is a commutative diagram
\begin{equation} \label{Uxdefinition}
    \begin{tikzcd}[row sep = small, column sep = small]
    \prod_{i=1}^r \wt{\fM}_{\btau_i,\red}^{\ev} \arrow[d,"\prod{\ev_{\btau_i}}"] &    U_{\ol{x}} \arrow[r] \arrow[d,"\ev_U"] \arrow[l] & \A_{Q_{\ol{x}},L_{\ol{x}}}\arrow[d]\\
    \prod_{i=1}^r X_{\btau_i} &
\prod_{i=1}^r Z_{\btau_i} \arrow[l]\arrow[r] & \prod_{i=1}^r \A_{{\btau_i}},
    \end{tikzcd}
\end{equation}
with $\cA_{\tau_i}$ the Artin fan of the toric variety $X'_{\btau_i}$. Define
\begin{equation} \label{Ugluedefinition}
U_{\ol{x}}^{\gl} : = U_{\ol{x}}\times_{\prod_{i=1}^r \wt{\fM}_{\btau_i,\red}^{\ev}}
\wt{\fM}_{\red}^{\gl,\ev}.
\end{equation}
We wish to study the map $\delta_U^{\ev}: U^{\gl}_{\ol{x}} \rightarrow U_{\ol{x}}$.

\begin{proposition} \label{localsplittingformula}
Let $\fV$, $\Delta(\fV)$ and $m_{[\brho]}$ be the global gluing data associated to the splitting morphism  $\delta:\scrM(X/B,\btau) \rightarrow \prod_{i=1}^r \scrM(X/B,\btau_i)$ defined in Definition \ref{globalgluingdata}. Let $\delta_U^{\ev}: U^{\gl}_{\ol{x}} \rightarrow U_{\ol{x}}$ be an \etale local model of \eqref{delevsplit} at a geometric point $\ol{x}$ of $\wt{\fM}_{\red}^{\gl,\ev}$, defined by the above diagrams \eqref{Uxdefinition} and \eqref{Ugluedefinition}. Let
\begin{equation*}
    U_{\ol{x}}^{\brho} = U_{\ol{x}} \times_{\prod_{i=1}^r \wt{\fM}_{\btau_i,\red}^{\ev}} \prod_{i=1}^r \wt{\fM}^{\ev}_{\brho_i,\btau_i,\red},
\end{equation*}
with $\wt{\fM}^{\ev}_{\brho_i,\btau_i,\red}$ the image substack of the finite morphism $\wt{j}_{\brho_i,\btau_i}: \wt{\fM}^{\ev}_{\brho_i,\red}\rightarrow \wt{\fM}^{\ev}_{\btau_i,\red}$. 

Then, in the Chow group of $U_{\ol{x}}$,
\begin{equation} \tag{$\star$} \label{localsplittingequation}
    \delta_{U*}^{\ev}([U_{\ol{x}}^{\gl}]) = \sum_{[\brho] \in \Delta(\fV)}m_{[\brho]} \cdot [U^{\brho}_{\ol{x}}].
\end{equation}
\end{proposition}

\begin{proof}
Note that $\wt{\fM}^{\gl,\ev}_{\red}$ is the reduction of $\prod_{i=1}^r \wt{\fM}^{\ev}_{\btau_i,\red} \times_{\prod_{i=1}^r X_{\btau_i}} X_{\btau}$, so $U_{\ol{x}}^{\gl}$ is the reduction of $U_{\ol{x}}\times_{\prod_{i=1}^r X_{\btau_i}} X_{\btau}$. As $U_{\ol{x}}$ is connected, the evaluation map $\ev_U$ factors through one component of $\prod_{i=1}^r Z_{\btau_i}$. Hence,
\begin{equation*}
\begin{split}
 U_{\ol{x}}^{\gl} = [U_{\ol{x}}\times_{\prod_{i=1}^r X_{\btau_i}} X_{\btau}]_{\red} & = [U_{\ol{x}}\times_{\prod_{i=1}^r Z_{\btau_i}} (\prod_{i=1}^r Z_{\btau_i}\times_{\prod_{i=1}^r X_{\btau_i}} X_{\btau})]_{\red}.  
\end{split}
\end{equation*}
By Proposition \ref{toriclocallemma}, there is a lattice index $\cN$, such that $\prod_{i=1}^r Z_{\btau_i}\times_{\prod_{i=1}^r X_{\btau_i}} X_{\btau}$ is a disjoint union of $\cN$ schemes, each of which is isomorphic to $Z_{\btau}$. Hence, $U_{\ol{x}}^{\gl}$ is the disjoint union of $\cN$ schemes, each of which is isomorphic to the reduction of 
\begin{equation} \label{glirfiber}
    U^{\gl,\operatorname{ir}}_{\ol{x}}: = U_{\ol{x}}\times_{\prod_{i=1}^r Z_{\btau_i}} Z_{\btau} = U_{\ol{x}}\times_{\prod_{i=1}^r X'_{\btau_i}} X'_{\btau},
\end{equation}
where the equality follows from Proposition \ref{toriclocallemma}. Hence, it is sufficient to study the diagonal map $\delta'_U: U_{\ol{x}}^{\gl,\ir}\rightarrow U_{\ol{x}}$. 

First, we observe that the evaluation map $U_{\ol{x}} \rightarrow \prod_{i=1}^r X'_{\btau_i}$ is idealized log smooth. It is sufficient to show that $\wt{\fM}^{\ev}_{\btau_i,\red} \rightarrow X'_{\btau_i}$ is idealized log smooth. By Corollary \ref{evisidealizedlogsmooth}, $\wt{\fM}_{\btau_i,\red}$ is idealized log smooth over $B$. Locally, the map $\wt{\fM}_{\btau_i,\red} \rightarrow B$ factors through 
\begin{equation*}
    \wt{\fM}_{\btau_i,\red} \xrightarrow{\ev_{\btau_i}} \cZ_{\btau_i} \hookrightarrow \cX_{\btau_i}' \rightarrow B,
\end{equation*}
with $\cZ_{\btau_i}$ being the relative Artin fan $\cA_{Z_{\btau_i}} \times_{\cA_B} B$ and $\cX'_{\btau_i}$ being $\cA_{\btau_i} \times_{\cA_{B}} B$. Note $\X'_{\btau_i}$ is logarithmically \etale over $B$, hence the evaluation map $\wt{\fM}_{\btau_i,\red} \rightarrow \cX'_{\btau_i}$ is idealized log smooth. The map from the evaluation enhancement
\begin{equation*}
    \wt{\fM}_{\btau_i,\red}^{\ev} = \wt{\fM}_{\btau_i,\red}\times_{\cX_{\btau_i}} X_{\btau_i} = \wt{\fM}_{\btau_i,\red}\times_{\cZ_{\btau_i}} Z_{\btau_i} = \wt{\fM}_{\btau_i,\red}\times_{\cX'_{\btau_i}} X'_{\btau_i}
\end{equation*}
to $X'_{\btau_i}$ is then idealized log smooth. 

Next, we study the local splitting morphism using the toric local model of idealized log smooth morphisms. Following \eqref{Uxdefinition}, let us define
\begin{equation} \label{Aevdefinition}
\begin{split}
    \A_{\ol{x}}^{\ev}  = \A_{Q_{\ol{x}},L_{\ol{x}}} \times_{\prod_{i=1}^r \cA_{\btau_i}} \prod_{i=1}^r X'_{\btau_i} \quad \mbox{ and } \quad
    \A_{\ol{x}}^{\gl,\ev}  = \A_{\ol{x}}^{\ev}\times_{\prod_{i=1}^r X'_{\btau_i}}X'_{\btau}.
\end{split}
\end{equation} 
In the fine, saturated Cartesian diagram 
\begin{equation} \label{hugediagram}
\begin{tikzcd}[column sep = small, row sep = small]
U_{\ol{x}}^{\gl,\operatorname{ir}} \arrow[r,"\phi"] \arrow[d,"\delta_U'"] & \A_{\ol{x}}^{\gl, \ev} \arrow[d,"\delta_{\cA}^{\ev}"] \arrow[r] & X'_{\btau} \arrow[d]\\
U_{\ol{x}} \arrow[r,"\iota"]& \A^{\ev}_{\ol{x}} \arrow[r] \arrow[d] & \prod_{i=1}^r X'_{\btau_i} \arrow[d] \\
& \cA_{Q_{\ol{x}},L_{\ol{x}}}\arrow[r] & \prod_{i=1}^r \cA_{\btau_i},
\end{tikzcd}
\end{equation}
the map $\iota: U_{\ol{x}} \rightarrow \cA_{\ol{x}}^{\ev}$ is induced by the universal property of the fiber product. Since the evaluation map $U_{\ol{x}} \rightarrow \prod_{i=1}^r X_{\btau_i}'$ is idealized log smooth, by \cite[Appendix B.4]{ACGSPunc}, the map $\iota$ is smooth. As the diagonal map $X'_{\btau} \rightarrow \prod_{i=1}^r X'_{\btau_i}$ is proper, both  vertical maps $\delta_U'$ and $\delta_{\cA}^{\ev}$ in diagram \eqref{hugediagram} are proper. We obtain that 
\begin{equation} \label{relations}
\begin{split}
    \delta'_{U*}[U_{\ol{x}}^{\gl,\operatorname{ir}}] = \delta'_{U*}\phi^*[\A^{\gl, \ev}_{
    \ol{x}}] = \iota^*\delta^{\ev}_{\A*}[\A^{\gl, \ev}_{
    \ol{x}}]
\end{split}
\end{equation}
for $\phi: U_{\ol{x}}^{\gl,\operatorname{ir}} \rightarrow \cA_{\ol{x}}^{\gl,\ev}$ in the diagram \eqref{hugediagram}. 

It is sufficient to study $\delta^{\ev}_{\A*}[\A^{\gl, \ev}_{\ol{x}}]$ using the concrete toric stack description of $\cA_{\ol{x}}^{\gl,\ev}$ and $\cA_{\ol{x}}^{\ev}$.  Let $[\bomega] = (\bomega_1,\ldots,\bomega_r)$ be the global type of $\delta^{\ev}_{\red}(\ol{x})$. For $[\brho] = (\brho_1,\ldots,\brho_r)$ a global type that admits a contraction to $[\btau] = (\btau_1,\ldots,\btau_r)$, we define 
\begin{equation*}
    \A_{\ol{x}}^{\ev,\brho}: = \begin{cases} V_{\A_{\ol{x}}^{\ev}}(\wt{\brho}), &  \mbox{if $[\brho]$ is a contraction of $[\bomega]$,}\\
    \varnothing, & \mbox{ otherwise},
    \end{cases}
\end{equation*}
where $\wt{\brho} = \prod_{i=1}^r \wt{\brho}_i$ is the evaluation cone associated to $[\brho]$ defined in Definition \ref{evaluationcone}. By Lemma \ref{evaluationconetype} and Corollary \ref{stratatificationofmmm}, the evaluation cone $\wt{\brho}$ is a subcone of $\wt{\bomega} = Q_{\ol{x}}^{\vee}$, hence the stratum $V_{\cA^{\ev}_{\ol{x}}}(\wt{\brho})$ is well-defined. The following lemma provides a description of $\delta^{\ev}_{\A*}[\A^{\gl, \ev}_{\ol{x}}]$, whose proof we defer to later.

\begin{lemma} \label{pushforwardforAglev}
Let $\fV$ be a generic displacement as defined in Definition \ref{globalgluingdata}. Then, in the Chow group of $\A_{\ol{x}}^{\ev}$,
\begin{equation}
\label{Aevlemmaeq}          \delta^{\ev}_{\A*}[\A^{\gl, \ev}_{
    \ol{x}}] = \sum_{\brho\in \Delta(\fV)} m_{[\brho]}' \cdot [\A_{\ol{x}}^{\ev,\brho}],
\end{equation}
where 
\begin{equation*} 
m_{[\brho]}' = [\prod_{i=1}^r N_{\btau_i}: \im(\ol{\Delta}) + \prod_{i=1}^r \im(\ol{\ev}_{\brho_i})],
\end{equation*}
Here $\ol{\Delta}: N_{\btau} \rightarrow \prod_{i=1}^r N_{\btau_i}$ is
the lattice diagonal map of $X_{\btau}'\rightarrow \prod_{i=1}^r X_{\btau_i}'$. The map $\ol{\ev}_{\brho_i}: N_{\wt{\brho}_i}\rightarrow N_{\btau_i}$ is defined by the factorization of the tropical evaluation maps discussed in \eqref{evdiscussion}
\begin{equation*} 
\ol{\evt}_{\brho_i}: N_{\wt{\brho}_i} \xrightarrow{\ol{\ev}_{\brho_i}} N_{\btau_i} \rightarrow \prod_{p\in \bS_i} N_p.
\end{equation*}
\end{lemma}

At last, we are ready to finish the proof by the following arguments. By Lemma \ref{pushforwardforAglev} and equation \eqref{relations}, in the Chow group of $U_{\ol{x}}$, 
\begin{equation*}
\begin{split}
    \delta^{'}_{U*}[U^{\gl,\ir}_{
    \ol{x}}] & = \sum_{[\brho] \in \Delta(\fV)} m'_{[\brho]} \cdot [\iota^{-1}(\A_{\ol{x}}^{\ev,\brho})] = \sum_{[\brho]\in \Delta(\fV)} m'_{[\brho]} \cdot [U^{\brho}_{\ol{x}}].
\end{split}
\end{equation*}
The second equality follows from Corollary \ref{stratatificationofmmm}. Hence, following the discussion in the proof of Proposition \ref{localsplittingformula},
\begin{equation*}
\begin{split}
    \delta_{U*}^{\ev}([U_{\ol{x}}^{\gl}]) = \sum_{[\brho] \in \Delta(\fV)}\cN \cdot m'_{[\brho]} \cdot [U^{\brho}_{\ol{x}}].
    \end{split}
\end{equation*}

It is now sufficient to show $\cN \cdot m_{[\brho]}' = m_{[\brho]}$. 
All the involved lattices can be fit into a commutative diagram
\begin{equation} \label{importantlatticediagram}
\begin{tikzcd} 
N_{\wt{\brho}}\times_{\prod_{i=1}^r N_{\btau_i}} N_{\btau} \arrow[r] \arrow[d] & N_{\btau} \arrow[r] \arrow[d,"\ol{\Delta}"] & \prod_{p\in \bS} N_p  \arrow[d,"\ol{\Delta}_p"] \\
N_{\wt{\brho}} = \prod_{i=1}^r N_{\wt{\brho}_i} \arrow[r,"\prod\ol{\ev}_{\brho_i}"] \arrow[d, "\coker"]&
\prod_{i=1}^r N_{\btau_i} \arrow[r,"\prod_{i=1}^r \ol{\Delta}_{\btau_i}"] \arrow[d, "\coker \ol{\Delta}"] & \prod_{p\in \bS} N_p\times N_p \arrow[d, "\coker\ol{\Delta}_p"] \\
C_{\wt{\brho}} \arrow[r] & C_{\btau} \arrow[r] & C_{p}.
\end{tikzcd}
\end{equation} 
Note that $\cN = [N_{\cN}^{\sat}: N_{\cN}]$ and $m_{[\brho]}' = [\prod_{i=1}^r N_{\btau_i}: N_m]$, where 
\begin{equation*}
 N_{\cN} =  \im(\ol{\Delta}_p)+ \prod_{i=1}^r \im(\ol{\Delta}_{\btau_i}), \quad N_m = \im(\ol{\Delta}) + \prod_{i=1}^r \im(\ol{\ev}_{\brho_i}).
\end{equation*}
In other words, in diagram \eqref{importantlatticediagram}, the order $\cN$ is obtained by taking the saturation of the image lattice of two maps in the right corner of the upper right square, while the order $m'_{[\brho]}$ comes from the two maps in the upper left square. Let $K = \im(\ol{\Delta}_p) + \prod_{i=1}^r \im(\ol{\Delta}_{\btau_i} \circ \ol{\ev}_{\brho_i})$ to be the image lattice from the right corner of the big upper rectangle. We claim that 
\begin{equation} \label{indexeseqality}
\begin{split}
\cN \cdot m'_{[\brho]} & =  [N_{\cN}^{\sat} : N_{\cN}] \cdot [\prod_{i=1}^r N_{\btau_i} : N_m]\\
& = [(\coker \ol{\Delta}_p(K))^{\sat}:\coker \ol{\Delta}_p(K)]  \\& = \big[\im(\prod_{i=1}^r \ol{\varepsilon}_{\brho_i})^{\sat}:\im(\prod_{i=1}^r \ol{\varepsilon}_{\brho_i})] = m_{[\brho]},
\end{split}
\end{equation}
with $\ol{\varepsilon}_{\brho_i}$ is defined in equation \eqref{varepsilondefinition} and Definition \ref{globalgluingdata}, where the first, the third and the fourth equalities follow directly from the definition of $\cN$, $m_{[\brho]}'$, $\ol{\varepsilon}_{\brho_i}$ and $m_{[\brho]}$. 

{\color{black}
In order to prove the second equality, we will utilize following Proposition \ref{Prop1}. Before that, let us first show 
\begin{equation} \label{twoequalities}
(\coker \ol{\Delta}_p(K))^{\sat} = \coker  \ol{\Delta}_p(K^{\sat}) = \coker  \ol{\Delta}_p(N_{\cN}^{\sat}).
\end{equation}
Since $K = \im(\ol{\Delta}_p) + \prod_{i=1}^r \im(\ol{\Delta}_{\btau_i} \circ \ol{\ev}_{\brho_i})$, the image $\im (\ol{\Delta}_p)$ is a sublattice of $K$, hence
\begin{equation*}
\begin{split}
(\coker \ol{\Delta}_p(K))^{\sat} = (K/\im\ol{\Delta}_p)^{\sat}.
\end{split}
\end{equation*}
Since lattice $K/\im\ol{\Delta}_p$ is a sublattice of $K^{\sat}/\im\ol{\Delta}_p$, and both lattices has dimension $\dim K - \dim \im(\ol{\Delta}_p)$, the saturation of $K/\im\ol{\Delta}_p$ in $K^{\sat}/\im\ol{\Delta}_p$ is the full lattice $K^{\sat}/\im\ol{\Delta}_p$. Therefore, their saturations in the quotient lattice $C_p$ in \eqref{importantlatticediagram} are the same, that is,  
\begin{equation*}
(\coker \ol{\Delta}_p(K))^{\sat} = (K/\im\ol{\Delta}_p) ^{\sat} = (K^{\sat}/\im\ol{\Delta}_p)^{\sat}.
\end{equation*}
Suppose for $\alpha \in \prod_{p\in\cS}N_p\times N_p$ and $[\alpha] \in C_p$, there is an integer $m$ such that $m\cdot[\alpha]$ lies in $K^{\sat}/\im\ol{\Delta}_p$. As $\im \ol{\Delta}_p$ is a subset of $K$, then the set $m \cdot \alpha + \im\ol{\Delta}_p$ is a subset of $K^{\sat}$. Hence, $m\cdot \alpha \in K^{\sat}$ and $\alpha \in K^{\sat}$. It follows that $[\alpha] \in K^{\sat}/\im\ol{\Delta}_p$ and $K^{\sat} / \im \ol{\Delta}_p$ is saturated. The first equality of \eqref{twoequalities} is proved. We now prove the second equliaty. Following the definition of $K$ and $N_{\cN}$, it is obvious that $K \subseteq N_{\cN}$, hence $K^{\sat} \subseteq N_{\cN}^{\sat}$. On the other hand, because 
\begin{equation*}
\begin{split}
K & = \im \ol{\Delta}_p +  \im\prod_{i=1}^r(\ol{\Delta}_{\btau_i} \circ \ol{\ev}_{\brho_i}) \\& = \im \ol{\Delta}_p +  \im\prod_{i=1}^r(\ol{\Delta}_{\btau_i} \circ \ol{\ev}_{\brho_i}) +\im(\prod_{i=1}^r \ol{\Delta}_{\btau_i} \circ \ol{\Delta}) \\ & = \im \ol{\Delta}_p + \im\{\prod_{i=1}^r \ol{\Delta}_{\btau_i}\circ  (\ol{\Delta} +\prod_{i=1}^r\ol{\ev}_{\brho_i})\}, 
\end{split}
\end{equation*}
where the second equality follows as $\im(\prod_{i=1}^r \ol{\Delta}_{\btau_i} \circ \ol{\Delta})$ is a subset of $\im \ol{\Delta}_p$. Following Lemma \ref{pushforwardforAglev} and the dimension argument of its proof, the image $\im(\ol{\Delta} +\prod_{i=1}^r\ol{\ev}_{\brho_i})$ is a full dimensional sublattice of $\prod_{i=1}^r N_{\btau_i}$, hence $K$ has the same dimension as $N_{\cN}$, which induces that their saturation are the same. We now finish the proof of equation \eqref{twoequalities}. 

Now we use the following proposition to prove \eqref{indexeseqality}. We postpone the proof of proposition \ref{Prop1} to the last of the section.

We call a homomorphism $\alpha: A\arr B$ of lattices \emph{of finite index} if $\im(\alpha)\subseteq B$ is of finite index. In this case, we define the \emph{index of $\alpha$} by
\[
\ind(\alpha)= \big[B:\im(\alpha)\big] = \big|B/\im(\alpha)\big|.
\]
\begin{proposition}
\label{Prop1}
Let
\[
\xymatrix{
A'\ar[r]\ar[d]&B'\ar[r]\ar[d]^\beta&C'\ar[d]^\delta\\
A\ar[r]^\alpha&B\ar[r]^\gamma&C
}
\]
be a commutative diagram of lattices with $\gamma$ injective, and the right-hand square is
Cartesian. Let $q:C\arr \ol C$ be the
cokernel of $\delta$. Suppose the homomorphisms $q\circ\gamma\circ\alpha$, $\alpha+\beta$ and $\gamma+\delta$ are of finite index. Then it holds that
\[
\ind(q\circ\gamma\circ\alpha)=\ind(\alpha+\beta)\cdot\ind(\gamma+\delta).
\]
\end{proposition}

Following the diagram \eqref{importantlatticediagram}, we have 
\begin{equation} \label{smallimportdiagram}
\begin{tikzcd} 
N_{\wt{\brho}}\times_{\prod_{i=1}^r N_{\btau_i}} N_{\btau} \arrow[r] \arrow[d] & N_{\btau} \arrow[r] \arrow[d,"\ol{\Delta}"] & \prod_{p\in \bS} N_p  \arrow[d,"\ol{\Delta}_p"] \\
N_{\wt{\brho}} = \prod_{i=1}^r N_{\wt{\brho}_i} \arrow[r,"\prod\ol{\ev}_{\brho_i}"] &
\prod_{i=1}^r N_{\btau_i} \arrow[r,"\prod_{i=1}^r \ol{\Delta}_{\btau_i}"]  & N^{\sat}_{\cN}.
\end{tikzcd}
\end{equation} 
Then by equation \eqref{twoequalities} and Proposition \ref{Prop1},
\begin{equation*}
\begin{split}
 [(\coker \ol{\Delta}_p(K))^{\sat}:\coker \ol{\Delta}_p(K)]
& = [\coker\ol{\Delta}_p(N_{\cN}^{\sat}): \coker \ol{\Delta}_p(K)] \\ & = [N_{\cN}^{\sat} : N_{\cN}] \cdot [\prod_{i=1}^r N_{\btau_i} : N_m]  
\end{split}
\end{equation*}
We finish the proof of \eqref{indexeseqality} and hence Proposition \ref{localsplittingformula}.
}
\end{proof}

\begin{proof}[\textbf{Proof of Lemma \ref{pushforwardforAglev}}] \label{proofdetail}
First, we show that $\delta_{\A}^{\ev}$ is the map of quotient stacks induced from a $T_{Q_{\ol{x}}}$-equivariant map of toric varieties. By Proposition \ref{cartesian},
\begin{equation*}
\begin{split}
\cA^{\ev}_{\ol{x}} =  \A_{Q_{\ol{x}},L_{\ol{x}}} \times_{\prod_{i=1}^r \cA_{\btau_i}} \prod_{i=1}^r X'_{\btau_i} 
 = [S_{Q_{\ol{x}},L_{\ol{x}}} \times \prod_{i=1}^r T_{\btau_i} /T_{Q_{\ol{x}}}].
\end{split}
\end{equation*}
Let $Y$ be $S_{Q_{\ol{x}},L_{\ol{x}}}\times \prod_{i=1}^r T_{\btau_i}$ and let $Y^{\gl}: = Y\times_{\prod_{i=1}^r X'_{\btau_i}} X'_{\btau}$, where the map $Y\rightarrow \prod_{i=1}^r X_{\btau_i}'$ is the composition of the quotient map $Y\rightarrow \cA_{\ol{x}}^{\ev}$ and projection $\cA_{\ol{x}}^{\ev}\rightarrow \prod_{i=1}^r X_{\btau_i}$. By Lemma \ref{quotientstacklemma}, the torus action of $T_{Q_{\ol{x}}}$ on $Y$ induces an action of $T_{Q_{\ol{x}}}$ on $Y^{\gl}$, such that the quotient stack $[Y^{\gl}/T_{Q_{\ol{x}}}]$ is isomorphic to $\cA^{\gl,\ev}_{\ol{x}}$. 
The diagonal map $\delta^{\ev}_{\cA}: \cA_{\ol{x}}^{\gl,\ev} \rightarrow \cA_{\ol{x}}^{\ev}$
is then induced from the quotient of the $T_{Q_{\ol{x}}}$-equivariant map $Y^{\gl} \rightarrow Y$. In order to study $\delta^{\ev}_{\cA*}[\cA_{\ol{x}}^{\gl,\ev}]$, it is sufficient to study its reduction $\delta^{\ev}_{\cA*}[\cA_{\ol{x},\red}^{\gl,\ev}]$, hence it is enough to study $Y_{\red}^{\gl} \rightarrow Y$. 

Next, we show that $Y^{\gl}_{\red} \rightarrow Y$ is the restriction of a toric morphism on a toric stratum. Such description allows us to use the generalized Fulton-Sturmfels formula in Proposition \ref{keyquotientfs} to obtain Lemma \ref{pushforwardforAglev}. Let $\cY$ be $S_{Q_{\ol{x}}} \times \prod_{i=1}^r  {T_{\btau_i}}$. The tropicalization of  $\cA_{\ol{x}}^{\ev} \rightarrow \prod_{i=1}^r X'_{\btau_i}$ induces a toric morphism $\ev_{\cY}: \cY\rightarrow \prod_{i=1}^r X_{\btau_i}'$. By Lemma \ref{molcholemma}, the fine, saturated fiber product 
$\cY\times_{\prod_{i=1}^r X_{\btau_i}'} X'_{\btau}$ is the disjoint union of $n$ varieties, each of which is isomorphic to a toric variety $\cY^{\gl}$, with \begin{equation} \label{pushout}
n = [\prod_{i=1}^r N_{\btau_i}: \ol{\Delta}(N_{\btau})+ \ol{\ev}_{\cY}(N(\cY))],
\end{equation}
for $\ol{\Delta}$ defined in \eqref{importantlatticediagram}. By Corollary \ref{IdealFanLift}, the lattice map $\ol{\ev}_{\cY}$ is defined as
\begin{equation} \label{evlattice}
\begin{split}
\ol{\ev}_{\cY}: N(\cY) = N_{Q_{\ol{x}}}\times \prod_{i=1}^r N_{\btau_i} \rightarrow \prod_{i=1}^r N_{\btau_i},\quad (a,b) \mapsto e(a)-b,
\end{split}
\end{equation}
with $e$ the lattice map of the evaluation map $\cA_{Q_{\ol{x}},L_{\ol{x}}} \rightarrow \prod_{i=1}^r \cA_{\btau_i}$. The map $\ol{\ev}_{\cY}$ is surjective, hence $n=1$. 

The scheme \begin{equation*}
Y^{\gl} = Y \times_{\prod_{i=1}^r X'_{\btau_i}} X'_{\btau} = Y\times_{\cY} \cY^{\gl}
\end{equation*}
is the subscheme of $\cY^{\gl}$ determined by the pullback of the ideal generated by $L_{\ol{x}}$ in $S_{Q_{\ol{x}}}$. In particular, we claim that the subscheme $Y^{\gl}_{\red}$ is the toric stratum $V_{\cY^{\gl}}(\sigma^{\gl})$, for $\sigma^{\gl} = \wt{\btau} \times \{0\}$. By the idealized structure on $\wt{\fM}_{\btau,\red}^{\ev}$ in Remark \ref{Idealstructureremark}, $S_{Q_{\ol{x}},L_{\ol{x}}}$ is $V_{S_{Q_{\ol{x}}}}(\prod_{i=1}^r \wt{\btau}_i)$.
Therefore $Y$ is the toric strata $V_{\cY}(\prod_{i=1}^r \wt{\btau}_i \times \{0\})$. We use $\sigma_Y$ to denote $\prod_{i=1}^r \wt{\btau}_i \times \{0\}$. Since $\btau$ is realizable and the types $\btau_i$ are obtained by splitting $\btau$, by the construction of the evaluation cones in Definition \ref{evaluationcone}, $\sigma^{\gl} = \wt{\btau} \times \{0\}$ is exactly the fiber product of cones 
\begin{equation*}
\sigma_Y \times_{\prod_{i=1}^r \Sigma(X'_{\btau_i})} \Sigma(X'_{\btau}),
\end{equation*}
from the fiber product of toric varieties $\cY \times_{\prod_{i=1}^r X'_{\btau_i}} X'_{\btau}$. Then, the cone $\sigma^{\gl}$ is the unique minimal cone in $\Sigma(\cY^{\gl})$ whose image in $\Sigma(\cY)$ intersects the interior of $\sigma_Y$. The reduction $Y^{\gl}_{\red}$ is the toric stratum $V_{\cY^{\gl}}(\sigma^{\gl})$.

Now, we are ready to use the generalized Fulton-Sturmfels formula to study the diagonal map $\delta^{\ev}_{\cA}: \cA_{\ol{x},\red}^{\gl,\ev}\rightarrow \cA^{\ev}_{\ol{x}}$, which is the toric morphism of toric stacks
\begin{equation*}
\Big[V_{\cY^{\gl}}(\sigma^{\gl})/T_{Q_{\ol{x}}}\Big] \rightarrow \Big[V_{\cY}(\sigma_Y)/T_{Q_{\ol{x}}}\Big].
\end{equation*} 

Let $\fV \in \prod_{p\in \bS} N_p$ be a generic displacement vector defined in Definition \ref{globalgluingdata}. Let $\psi$ be the map
\begin{equation*}
    \psi: \prod_{i=1}^r N_{\btau_i}\rightarrow \prod_{p\in \bS} N_p\times N_p\rightarrow \prod_{p\in \bS} N_p,
\end{equation*}
whose first map is the projections of fiber products $N_{\btau_i}$ to $N_p$ and the second map is the cokernel of the diagonal map of $N_p$. We first show that there exists an element $v\in N(\cY)$ such that $\psi\circ \ol{\ev}_{\cY}(v) = \fV$. Let $N'$ be the sublattice $\prod_{p\in \bS}^B N_{p}$ of $\prod_{p\in \bS} N_p$, whose image in $N_B$ under maps $\prod_{p\in \bS} N_p\rightarrow N_p\rightarrow N_B$ are the same for any $p\in \bS$. By definition, vector $\fV$ lies in $N'$. Note that 
\begin{equation*}
    \psi(N_{\btau_1}\times_{N_B}\ldots\times_{N_B} N_{\btau_r}) = N'.
\end{equation*}
Hence $\psi^{-1}(\fV)$ is non-empty. Since $\ol{\ev}_{\cY}$ is surjective, we obtain that there exists $v$ in $N(\cY)$ such that $\psi\circ \ol{\ev}_{\cY}(v) = \fV$. Next, we want to show that $v$ is a generic displacement vector associated to $(\cY,\cY^{\gl},Y^{\gl}_{\red})$ as defined in Definition \ref{genericdefinition2}. The equation \eqref{Aevlemmaeq} can be obtained using Fulton-Sturmfels formula in Corollary \ref{keyquotientfs} associated to $v$.

\begin{enumerate}
    \item \textit{The vector $v$ is generic with respect to $(\cY,\cY^{\gl},Y^{\gl}_{\red})$.}
Let $f: N(\cY^{\gl}) \rightarrow N(\cY)$ be the lattice map and let $q_{\cY}: N(\cY)\rightarrow N(\cY)/N_{\sigma_Y}$ be the lattice quotient map. We need to show for any cone $\omega \in \Sigma(\cY)$ satisfying the conditions $(1),(2),(3)$ in Definition \ref{genericdefinition2}, the intersection $q_{\cY,\RR}((\im(f)+v) \cap \omega)$ lies in the interior of the cone $q_{\cY,\RR}(\omega)$. Following conditions $(1),(3)$, $\omega$ determines a unique type $[\brho] = (\brho_1,\ldots,\brho_r)$ with property $(i),(ii)$ in Definition \ref{globalgluingdata} $(2)$. Furthermore, the dimension of $\omega$ satisfies the dimension condition in property $(iii)$ since 
\begin{equation*}
\begin{split}
    & \sum_{i=1}^r \dim \wt{\brho}_i - \dim \wt{\btau}\\ = & \dim \omega - \dim \sigma^{\gl}\\  \overset{Def \ref{genericdefinition2}}{=}
    & \dim N(\cY) - \dim N(\cY^{\gl}) +\dim \sigma^{\gl} - \dim \sigma^{\gl} \\
    {=} & \dim N(\cY) - \dim \im(f) = \sum_{i=1}^r N_{\btau_i} - \dim N_{\btau} \\= & \sum_{p\in \bS}\dim N_p - (|\bS|-r+1) \cdot \rank Q_B^{\gp}.
    \end{split}
\end{equation*}

Recall that $\varepsilon_{\brho}$ is defined in \eqref{varepsilondefinition}
\begin{equation*}
\varepsilon_{\brho}: \prod_{i=1}^r\wt{\brho}_{i} \xrightarrow{\prod \evt_{\brho}} \prod_{p\in \bS} N_{p,\RR}\times N_{p,\RR} \xrightarrow{\prod \coker \ol{\Delta}_p} \prod_{p\in \bS} N_{p,\RR}.
\end{equation*}
By Remark \ref{equivalentRemark}, vector $\fV$ does not lie in the boundary of the image of $\varepsilon_{\brho}$. Then the preimage $(\varepsilon_{\brho})^{-1}(\fV)$ intersects with the interior of $\prod_{i=1}^r\wt{\brho}_{i}$. Note that the cone $\omega =\prod_{i=1}^r\wt{\brho}_i\times \{0\}.$ Hence, $(\ol{\ev}_{\cY})^{-1}\psi^{-1}(\fV)$ intersects with the interior of cone $\omega$. As $\im(f)$ contains the kernel of $\psi\circ\ol{\ev}_{\cY}$, the set $(\ol{\ev}_{\cY})^{-1}\psi^{-1}(\fV)$ is contained in $\im(f)+v$. Hence, the intersection of $\im(f) + v$ with $\omega$ is not empty and is in the interior of $\omega$. It follows that the intersection $q_{\cY,\RR}((\im(f)+v) \cap \omega)$ lies in the interior of the cone $q_{\cY,\RR}(\omega)$.

\item \textit{Lemma \eqref{pushforwardforAglev} follows from Fulton-Sturmfels Formula in Corollary \ref{keyquotientfs} on $v$.} By Corollary \ref{keyquotientfs}, we have
\begin{equation*}
\begin{split}
    & \delta_{\cA*}^{\ev}\Big[V_{\cY^{\gl}}(\sigma^{\gl})/T_{Q_{\ol{x}}}\Big] \\ = &\sum_{\rho'\in \Delta^{\sigma^{\gl}}(v)} [N(\cY): \im(f)+N_{\rho'}] \cdot [V_{\cY}(\rho')/T_{Q_{\ol{x}}}],
\end{split}
\end{equation*}
with $\Delta^{\sigma^{\gl}}(v)$ the set of cones in $N(\cY)$ defined in Definition \ref{genericdefinition2}. There is a bijection between types in $\Delta(\fV)$ and cones in $\Delta^{\sigma^{\gl}}(v)$, by taking a type $[\brho]$ to $\prod_{i=1}^r \wt{\brho}_i\times \{0\}$. The substack $\cA_{\ol{x}}^{\ev,\brho}$ is the same as $[V_{\cY}(\rho')/T_{Q_{\ol{x}}}]$ following the stratification of the moduli space in Corollary \ref{stratatificationofmmm}. As for multiplicities,
 since $\ol{\ev}_{\cY}$ is surjective and the kernel is contained in $\im(f)$, by taking the quotient of $\ker(\ol{\ev}_{\cY})$, we get
\begin{equation*} 
\begin{split}
[N(\cY):\im(f)+N_{\rho'}] & = [\prod_{i=1}^r N_{\btau_i}: \ol{\ev}_{\cY}(\im(f) + N_{\rho'})]
\\ & = [\prod_{i=1}^r N_{\btau_i}: \im(\ol{\Delta}) + \prod_{i=1}^r \im(\ol{\ev}_{\brho_i})] = m_{[\brho]}'.
\end{split}
\end{equation*}
\end{enumerate}
\end{proof}

\begin{remark} \label{compatibilityremark}
With the same assumption in Lemma \ref{pushforwardforAglev}, by Corollary \ref{keyquotientfs}, there is a closed substack $K_{\A} \subseteq \A_{\ol{x}}^{\ev} \times \PP^1$ and a projection map $\alpha_{\cA}: K_\A \rightarrow \PP^1$ such that as algebraic cycles in $\A_{\ol{x}}^{\ev}$,
\begin{equation*}
\begin{split}
    [\alpha_{\cA}^{-1}(1)]   = [\delta_{\A}^{\ev}(\A_{\ol{x},\red}^{\gl,\ev})] ,\quad 
    [\alpha_{\cA}^{-1}(0)]   = \sum_{[\brho]\in \Delta(\fV)} \frac{m'_{[\brho]}}{\fI_{\ol{x}}} \cdot [{\A_{\ol{x}}^{\ev,\brho}}],
\end{split}
\end{equation*}
with $\fI_{\ol{x}} = [\im(q_{\cY}\circ f)^{\operatorname{sat}}:\im(q_{\cY}\circ f)]$ 
for $N(\cY^{\gl})\xrightarrow{f} N(\cY) \xrightarrow{q_{\cY}} N(\cY)/N_{\sigma_Y}$ defined as above. The index $\fI_{\ol{x}}$ is the degree of map $\delta_{\cA}^{\ev}$.

Similarly, take $K_{U_{\ol{x}}}$ to be the preimage of $K_{\cA}$ under the smooth map $\iota \times \id$ from $U_{\ol{x}} \times \PP^1$ to  $\cA_{\ol{x}}^{\ev} \times \PP^1$. Let $\alpha_U: K_{U_{\ol{x}}} \rightarrow \PP^1$ be the projection map. As the diagram \eqref{hugediagram} is fine, saturated Cartesian, as algebraic cycles in $U_{\ol{x}}$,
\begin{equation}\label{induceformulacondition}
\begin{split}
    [\alpha_U^{-1}(1)] & = [\delta^{\ev}_U(U^{\gl}_{\ol{x}})], \quad
    [\alpha_U^{-1}(0)]  = \sum_{[\brho]\in \Delta(\fV)} \frac{m_{[\brho]}}{\fI_{\ol{x}}} \cdot [U_{\ol{x}}^{\brho}].
\end{split}
\end{equation}The closed substack $K_{U_{\ol{x}}}$ induces the local splitting equation \eqref{localsplittingequation} in Proposition \ref{localsplittingformula}.
\end{remark}

{\color{black}{
At last, we finish the proof of Proposition \ref{Prop1}.
\begin{proof}[\textbf{Proof of Proposition \ref{Prop1}}]
\label{proofProp1}
The diagram induces a sequence
\[
\xymatrix{
\ol A\ar[r]^{\ol \alpha}& \ol B\ar[r]^{\ol \gamma}&\ol C
}
\]
of cokernels of the vertical maps. Note that $\ol\gamma$ is injective as
$\gamma$ is injective and the right-hand square is Cartesian. We have
\begin{equation*}
\ind(q\circ \gamma \circ \alpha) = [\ol{C} : \im (q\circ \gamma \circ \alpha)] = [\ol{C} : \im (\ol{\gamma} \circ \ol{\alpha})] = \ind(\ol\gamma\circ\ol\alpha).
\end{equation*}
Applying the following
Lemma~\ref{Lem2} and then twice Lemma~\ref{Lem3} below, we obtain the finite
index properties and the claimed equality:
\[
\ind(q\circ\gamma\circ\alpha)=
\ind(\ol\gamma\circ\ol\alpha)=\ind(\ol\alpha)\cdot\ind(\ol\gamma)=
\ind(\alpha+\beta)\cdot\ind(\gamma+\delta).
\]
\end{proof}

\begin{lemma}
\label{Lem2}
Let
\[
\xymatrix{
A\ar[r]^\alpha& B\ar[r]^\beta&C
}
\]
be a sequence of lattices with $\alpha,\beta$ of finite index and $\beta$ injective. Then $\beta\circ\alpha$ is of finite index, and it holds
\[
\ind(\beta\circ\alpha)=\ind(\beta)\cdot\ind(\alpha).
\] 
\end{lemma}

\begin{proof}
Notice first that by replacing $A$ by $A/\ker(\alpha)$ we may assume $\alpha$ is
also injective. Now consider the following diagram with exact rows and columns:
\[
\xymatrix{
&0\ar[d]&0\ar[d]&0\ar[d]\\
0\ar[r]&A\ar[r]^\alpha\ar[d]&B\ar[r]\ar[d]^\beta&Q_1\ar[r]\ar[d]&0\\
0\ar[r]&\beta(\alpha(A))\ar[r]\ar[d]&C\ar[r]\ar[d]&Q\ar[r]\ar[d]&0\\
&0\ar[r]&Q_2\ar[r]\ar[d]&Q_2\ar[r]\ar[d]&0\\
&&0&0
}
\]
Here we first fill in $Q_1$ and $Q$ by completing the second and third lines, and then complete the diagram using the Snake Lemma.

The statement now follows from
\[
\ind(\beta\circ\alpha)=\big[C:\beta(\alpha(A))\big] =\big|Q\big|=
\big|Q_1\big|\cdot\big|Q_2\big|=\ind(\alpha)\cdot\ind(\beta).
\]
\end{proof}

\begin{lemma}
\label{Lem3}
Let
\[
\xymatrix{
A'\ar[r]^\gamma\ar[d]^{\alpha'}&A\ar[r]\ar[d]^\alpha&\ol A\ar[r]\ar[d]^{\ol\alpha}&0\\
B'\ar[r]^\beta&B\ar[r]&\ol B\ar[r]&0
}
\]
be a commutative diagram of lattices with exact rows. {\color{black}{Suppose $\alpha + \beta$ is a
homomorphism of finite index.}} Then $\ol\alpha$ is of finite index and $\ind(\ol\alpha)=\ind(\alpha+\beta)$.
\end{lemma}

\begin{proof}

We consider the following commutative diagram with exact rows and columns:
\[
\xymatrix{
A'\oplus B'\ar[r]^{\gamma\oplus\id}\ar[d]_{\alpha'+\id}&
A\oplus B' \ar[r]\ar[d]^{\alpha+\beta}& \ol A\ar[d]^{\ol\alpha}\ar[r]&0\\
B'\ar[r]^\beta&B\ar[r]\ar[d]& \ol B\ar[r]\ar[d]&0\\
&Q\ar[r]^\phi\ar[d]&Q'\ar[d]\\
&0&0
}
\]
After replacing $A'\oplus B'$ by $A'\oplus B'/\ker(\gamma\oplus\id)$ and $B'$ by $B'/\ker(\beta)$ to turn the first two rows into a morphism of short exact sequences, the Snake Lemma shows that $\phi$ is an isomorphism, and hence
\[
\ind(\ol\alpha)=\big|Q'\big|=\big| Q\big|=\ind(\alpha+\beta).
\]
\end{proof}
}}

\subsection{Gluing of the Local Models} \label{gluingofthelocalmodels}
Now, we are ready to prove the main theorem. 
\begin{proof}[\textbf{Proof of Theorem \ref{main theorem}}]

For $\ol{x}$ a geometric point on $\wt{\fM}^{\gl,\ev}_{\red}$, by Section \ref{localmodelsubsection}, there is an \etale neighborhood $U_{\ol{x}}$ of $\delta_{\red}^{\ev}(\ol{x})$ and $U_{\ol{x}}^{\gl}$ such that the Theorem \ref{localsplittingformula} is satisfied.  As $\ol{x}$ goes over the geometric points on $\wt{\fM}^{\gl,\ev}_{\red}$, we obtain an \etale cover $\bigsqcup_{\ol{x}} U_{\ol{x}}$ of $\prod_{i=1}^r \wt{\fM}^{\ev}_{\btau_i, \red}$.

Let $\fV$ be a generic displacement vector defined in Definition \ref{globalgluingdata}. By Proposition \ref{localsplittingformula} and  Remark \ref{compatibilityremark}, there are closed substacks $K_{U_{\ol{x}}}$ of $U_{\ol{x}}\times \PP^1$ that induces the rational equivalence condition \eqref{induceformulacondition}. We first show that for the geometric points $\ol{x}$ and $\ol{x}'$ in the same connected component of $\wt{\fM}^{\gl,\ev}_{\red}$, the indices $\fI_{\ol{x}}$ in \eqref{induceformulacondition} are the same. It is sufficient to show $\fI_{\ol{x}} = \fI_{\ol{x}'}$ supposing $\ol{x}$ is a generization of $\ol{x}'$. As the idealized structure on $\wt{\fM}_{\red}^{\gl,\ev}$ is coherent, by \cite[Prop II.2.6.1]{LogAlgebraicGeometry}, the ideal $L_{\ol{x}}$ is generated by $L_{\ol{x}'}$ under the map $Q_{\ol{x}'}\rightarrow Q_{\ol{x}}$. We then obtain an open embedding of stacks $\cA_{Q_{\ol{x}}, L_{\ol{x}}} \rightarrow \cA_{Q_{\ol{x}'}, L_{\ol{x}'}}$. Then there is a fiber diagram
\begin{equation*} 
\begin{tikzcd}[column sep = small, row sep = small]
\A_{\ol{x}}^{\gl, \ev} \arrow[d,"\delta_{\cA,\ol{x}'}^{\ev}"] \arrow[r,hook]& \A_{\ol{x}'}^{\gl, \ev} \arrow[d,"\delta_{\cA,\ol{x}}^{\ev}"] \arrow[r] & X'_{\btau} \arrow[d]\\
\A_{\ol{x}}^{\ev} \arrow[d] \arrow[r,hook]& \A^{\ev}_{\ol{x}'} \arrow[r] \arrow[d] & \prod_{i=1}^r X'_{\btau_i} \arrow[d] \\
\cA_{Q_{\ol{x}},L_{\ol{x}}}\arrow[r,hook]& \cA_{Q_{\ol{x}'},L_{\ol{x}'}}\arrow[r] & \prod_{i=1}^r \cA_{\btau_i}.
\end{tikzcd}
\end{equation*}
As the indices $\fI_{\ol{x}}$ and $\fI_{\ol{x}'}$ are the degrees of the morphisms $\delta_{\cA,\ol{x}}^{\ev}$ and $\delta_{\cA,\ol{x}'}^{\ev}$, it follows from the diagram that they are the same.

Assume that $\wt{\fM}^{\gl,\ev}_{\red}$ has one connected component and let $\fI$ be the index $\fI_{\ol{x}}$ for any geometric point $\ol{x}$. Let $K_{\btau}$  be the closure of the image of $\bigsqcup K_{U_{\ol{x}}}$ in $\prod_{i=1}^r \wt{\fM}_{\btau_i,\red}^{\ev}\times \PP^1$. Let $\alpha$ be the projection map $\alpha: K_{\btau}\rightarrow \PP^1$. Then, as algebraic cycles in $\prod_{i=1}^r \wt{\fM}_{\btau_i,\red}^{\ev}$,
\begin{equation*}
    \begin{split}
        [\alpha^{-1}(1)] & = \big[\bigcup_{\ol{x}} \psi_{\ol{x}} \circ \delta^{\ev}_U(U_{\ol{x}}^{\gl})\big] = \big[\delta_{\red}^{\ev}(\wt{\fM}_{\red}^{\gl,\ev})\big] \\
        [\alpha^{-1}(0)] & = \sum_{[\brho]\in \Delta(\fV)} \frac{m_{[\brho]}}{\fI} \cdot \big[\bigcup_{\ol{x}} \psi_{\ol{x}}( U_{\ol{x}}^{\brho})\big] \\
        & = \sum_{[\brho]\in \Delta(\fV)} \frac{m_{[\brho]}}{\fI} \cdot \big[\prod_{i=1}^r \wt{j}_{\brho_i, \btau_i}(\wt{\fM}^{\ev}_{\brho_i,\red})],
    \end{split}    
\end{equation*}
where $\psi_{\ol{x}}$ is the \etale map from $U_{\ol{x}}$ to $\prod_{i=1}^r \wt{\fM}_{\btau_i,\red}^{\ev}$ and $\wt{j}_{\brho_i, \btau_i}: \wt{\fM}^{\ev}_{\brho_i,\red} \rightarrow \wt{\fM}^{\ev}_{\btau_i,\red}$ is the finite map induced from the contraction morphism from $\brho_i$ to $\btau_i$ for each $i=1,\ldots,r$. As the degree of $\delta^{\ev}_{\red}$ is the same as $\delta^{\ev}_U$, we obtain the equation
\begin{equation} \label{diagonalequationkey}
    \delta^{\ev}_{\red*}[\wt{\fM}_{\red}^{\gl,\ev}] = \fI \cdot [\delta^{\ev}_{\red}(\wt{\fM}_{\red}^{\gl,\ev})] = \sum_{[\brho] \in \Delta(\fV)} m_{[\brho]} \cdot \prod_{i=1}^r[\wt{j}_{\brho_i, \btau_i}(\wt{\fM}_{\brho_i,\red}^{\ev})].
\end{equation} Following the notations in  Proposition \ref{gluedmodulireducedsetup}, we have $\gamma_i$ from $\wt{\fM}^{\ev}_{\btau_i}$ to $\fM^{\ev}_{\btau_i}$ and $\beta_i$ from the reduced induced stack $\wt{\fM}^{\ev}_{\btau_i,\red}$ to $\wt{\fM}^{\ev}_{\btau_i}$, and 
\begin{equation*}
\begin{split}
    \delta'_*[\fM^{\ev}_{\btau}] & = (\prod_{i=1}^r\gamma_i \circ \beta_i)_* \delta^{\ev}_{\red*}[\wt{\fM}^{\gl,\ev}_{\red}]\\
    & \overset{\eqref{diagonalequationkey}}{=} \sum_{[\brho] \in \Delta(\fV)} m_{[\brho]} \cdot \prod_{i=1}^r(\gamma_i\circ \beta_{i})_*[\wt{j}_{\brho_i, \btau_i}(\wt{\fM}_{\brho_i,\red}^{\ev})]\\
    & \overset{(1)}{=} \sum_{[\brho] \in \Delta(\fV)} m_{[\brho]} \cdot \prod_{i=1}^r[j_{\brho_i,\btau_i}(\fM^{\ev}_{\brho_i})]\\
    & = \sum_{[\brho] \in \Delta(\fV)} \frac{m_{[\brho]}}{|\Aut(\brho_i/\btau_i)|} \cdot \prod_{i=1}^rj_{\brho_i,\btau_i*}[\fM^{\ev}_{\brho_i}],
\end{split}
\end{equation*}
with $j_{\brho_i, \btau_i}: {\fM}^{\ev}_{\brho_i} \rightarrow {\fM}^{\ev}_{\btau_i}$ from the contraction morphism $\brho_i\rightarrow \btau_i$. Since $\fM_{\brho_i}^{\ev}$ is reduced over $B$ as shown in \cite[Prop 3.28]{ACGSPunc}, 
\begin{equation*}
    j_{\brho_i,\btau_i}(\fM_{\brho_i}^{\ev}) = \gamma_i\circ \beta_i \circ \wt{j}_{\brho_i,\btau_i}(\wt{\fM}_{\brho_i,\red}^{\ev}).
\end{equation*}
By \cite[Prop 5.5]{ACGSPunc}, $\gamma_i$ induces an isomorphism on reductions. Hence $\gamma_i\circ \beta_i$ has degree one and we get equality $(1)$. The last equality follows from the fact that $j_{\brho_i,\btau_i}$ is finite of degree $|\Aut(\brho_i/\btau_i)|$.

Now suppose $\wt{\fM}^{\gl,\ev}_{\red}$ has more than one component, then  by Lemma \ref{gluingreducedlemma} and Proposition \ref{isoafterenhanced}, moduli spaces $\wt{\fM}_{\btau}^{\ev}$ and $\fM^{\ev}_{\btau}$ have more than one component. For each component, we have equation \eqref{diagonalequationkey}. As $m_{[\brho]}$ is independent of the geometric point $\ol{x}$ from the component, the above equation holds for general $\fM_{\btau}^{\ev}$. Then, following Theorem \ref{setuptheorem}, we finish the proof of Theorem \ref{main theorem}.
\end{proof}

\appendix

\section{Lift of Artin Cones} \label{Appendix}

Recall that for an algebraic group $G$ and a scheme $X$ with a $G$-action, the quotient stack $[X/G]$ is the groupoid fibered over the category of schemes, such that
\begin{enumerate}
\item An object over a scheme $B$ is a diagram 
 $ \Bigg\{
    \begin{tikzcd}[column sep = small]
    E \arrow{r}{h}  \arrow{d}{\pi} & X \\
    B,
    \end{tikzcd}
    \Bigg\}$
where $E$ is a principal $G$-bundle over $B$ and $h: E\rightarrow X$ is a $G$-equivariant map. 
\item A morphism from an object  $ \Bigg\{
    \begin{tikzcd}[column sep = small,row sep = small]
    E' \arrow{r}{h'}  \arrow{d}{\pi} & X \\
    B',
    \end{tikzcd}
    \Bigg\}$ over $B'$ to an object  $ \Bigg\{
    \begin{tikzcd}[column sep = small,row sep = small]
    E \arrow{r}{h}  \arrow{d}{\pi} & X \\
    B,
    \end{tikzcd}
    \Bigg\}$ over $B$ is a pair $(g,g')$ with $g:B'\rightarrow B$ and $g': E'\rightarrow E\times_B B'$ such that $g'$ is a $G$-equivariant isomorphism and $h' = h\circ g'$.
\end{enumerate} 

Let us first show a general lemma regarding quotient stacks.

\begin{lemma} \label{quotientstacklemma}
Let $G$ be an algebraic group and $X$ be a scheme with a $G$-action $\gamma: G\times X\rightarrow X$. Let $Y$  and $Z$ be algebraic stacks.

Assume there is a $G$-invariant morphism $f':X\rightarrow Y$, hence a map $f$ from the quotient stack $[X/G]$ to $Y$. Let $g:Z\rightarrow Y$ be a representable morphism of algebraic stacks. There is a $G$-action on scheme $X\times_Y Z$ induced by its action on $X$. Then, there is a $2$-isomorphism
\begin{equation*}
[X\times_Y Z/G] \rightarrow [X/G]\times_{Y} Z.
\end{equation*}
\end{lemma}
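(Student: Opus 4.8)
The plan is to construct the $2$-isomorphism by writing down functors in both directions and checking they are mutually inverse, using the explicit groupoid description of quotient stacks recalled just above the lemma statement. First I would unwind what an object of $[X\times_Y Z/G]$ over a scheme $B$ is: it is a principal $G$-bundle $\pi\colon E\to B$ together with a $G$-equivariant map $h\colon E\to X\times_Y Z$, where $G$ acts on $X\times_Y Z$ via the action on $X$ (this makes sense because $f'\colon X\to Y$ is $G$-invariant, so the projection $X\times_Y Z\to Z$ is $G$-invariant and the action descends to the fibre product). By the universal property of the fibre product of stacks, $h$ is the data of a $G$-equivariant map $h_1\colon E\to X$, a (not necessarily equivariant) map $h_2\colon E\to Z$, and a $2$-morphism $f'\circ h_1 \Rightarrow g\circ h_2$; since $f'\circ h_1$ is $G$-invariant (as $h_1$ is equivariant and $f'$ is invariant), $h_2$ factors through $B$, i.e. $h_2 = z\circ\pi$ for a map $z\colon B\to Z$, with a compatible $2$-morphism. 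From this one reads off: an object of $[X/G]\times_Y Z$ over $B$, namely the pair $\big((E,h_1),\,z\big)$ together with the $2$-morphism identifying $f\circ[(E,h_1)]$ with $g\circ z$. This assignment is the functor $\Phi\colon [X\times_Y Z/G]\to [X/G]\times_Y Z$.

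Next I would build the inverse functor $\Psi$. Given an object of $[X/G]\times_Y Z$ over $B$ — i.e. a principal bundle $E\to B$ with equivariant $h_1\colon E\to X$, a map $z\colon B\to Z$, and a $2$-morphism $f\circ[(E,h_1)]\Rightarrow g\circ z$ — pull $z$ back along $\pi$ to get $z\circ\pi\colon E\to Z$, and observe that $(h_1, z\circ\pi)$ together with the pulled-back $2$-morphism defines a $G$-equivariant map $E\to X\times_Y Z$ (equivariance of the second component is automatic since it is pulled back from $B$ and $G$ acts trivially on the $Z$-factor in the relevant sense). This gives an object of $[X\times_Y Z/G]$. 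One then checks $\Phi$ and $\Psi$ are defined on morphisms compatibly — a morphism $(g_B, g_E)$ of objects is just a map of base schemes plus an isomorphism of pulled-back bundles commuting with the maps to the target — and that $\Phi\circ\Psi$ and $\Psi\circ\Phi$ are naturally $2$-isomorphic to the respective identities. All of this is a formal consequence of the universal property of $2$-fibre products of stacks together with the fact that a principal bundle $E\to B$ is an fppf cover, so a $G$-invariant map out of $E$ is the same as a map out of $B$.

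The one genuine point requiring care is the claim that a $G$-invariant morphism out of the total space of a principal $G$-bundle $\pi\colon E\to B$ descends uniquely to $B$; equivalently, that $B = [E/G]$ via $\pi$, or that $E\times_B E\cong G\times E$. This is standard (it is the defining property of a principal bundle in the fppf topology) but it is where the argument actually uses that $\pi$ is a torsor rather than an arbitrary map, and it is what makes the second component $h_2$ of $h$ factor through $B$. I expect this descent step to be the main obstacle in the sense of being the only non-purely-formal ingredient; the rest is bookkeeping with the universal property. I would also remark that the hypothesis that $g\colon Z\to Y$ is representable is used precisely to guarantee that $X\times_Y Z$ is an algebraic space (a scheme, in the situations of interest, e.g. when $X$ is affine), so that $[X\times_Y Z/G]$ is again of the expected form; the $2$-isomorphism itself does not need representability, but the statement as phrased does. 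Finally I would note the lemma will be applied with $G = \spec\kk[Q_{\ol x}^{\gp}]$ a torus, $X = S_{Q_{\ol x},L_{\ol x}}\times\prod T_{\btau_i}$, $Y = \prod_{i=1}^r X'_{\btau_i}$ and $Z = X'_{\btau}$, to identify $[Y^{\gl}/T_{Q_{\ol x}}]$ with $\cA^{\gl,\ev}_{\ol x}$ as used in the proof of Lemma \ref{pushforwardforAglev}.
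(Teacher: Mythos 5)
The paper proves this lemma differently: it cites \cite[Lemma 2.3.2]{wang2011moduli} for the $2$-Cartesian square
\begin{equation*}
\begin{tikzcd}[column sep = small, row sep = small]
{[}X\times_Y Z/G{]} \arrow[r] \arrow[d] & {[}Z/G{]} \arrow[d] \\
{[}X/G{]} \arrow[r] & {[}Y/G{]}
\end{tikzcd}
\end{equation*}
with trivial $G$-actions on $Y$ and $Z$, then uses $[Z/G]\cong Z\times BG$ and $[Y/G]\cong Y\times BG$ to rewrite $[Z/G]\cong [Y/G]\times_Y Z$, and concludes by pasting fibre squares. Your argument is a genuinely different, more hands-on route: you unwind the groupoid presentation of both sides over a test scheme $B$, construct inverse functors explicitly, and isolate the one non-formal input as fppf descent of a $G$-invariant morphism along a $G$-torsor $E\to B$. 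The two approaches have complementary virtues: the paper's proof is short and clean but delegates the content to an external reference and to the identification $[Z/G]\cong Z\times BG$; yours is self-contained and makes visible exactly where torsoriality of $E\to B$ is used, at the cost of more bookkeeping (which you only sketch rather than carry out).

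One small imprecision in your write-up: you justify that $h_2 = p_Z\circ h$ factors through $B$ by appealing to $G$-invariance of $f'\circ h_1$. The cleaner reason is that $G$-equivariance of $h\colon E\to X\times_Y Z$ together with the fact that $G$ acts trivially on the $Z$-factor forces $h_2$ itself to be $G$-invariant, and \emph{then} fppf descent for the torsor $E\to B$ produces $z\colon B\to Z$; the $G$-invariance of $f'\circ h_1$ is what you need separately to descend the $2$-morphism datum, not $h_2$. Also note the paper asserts $X\times_Y Z$ is a scheme; in general representability of $g$ only gives an algebraic space, though in the applications ($X$ affine, everything toric) this is harmless, and you correctly flag that representability is what keeps the quotient-stack description well posed rather than what drives the isomorphism itself.
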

\begin{proof}
By \cite[Lemma 2.3.2]{wang2011moduli}, there is a $2$-Cartesian diagram
\begin{equation*}
\begin{tikzcd}[column sep = small, row sep = small]
{[}X\times_Y Z/G{]} \arrow[r] \arrow[d] & {[}Z/G{]} \arrow[d] \\
{[}X/G{]} \arrow[r] & {[}Y/G{]},
\end{tikzcd}
\end{equation*}
where $[Z/G]$ and $[Y/G]$ are the quotient stacks induced by the trivial $G$-actions. As $[Z/G]$ is $2$-isomorphic to the product stack $Z\times BG$ and $[Y/G]$ is $2$-isomorphic to $Y\times BG$, we obtain that 
\begin{equation*}
[Z/G] = [Y/G]\times_Y Z.
\end{equation*}
Hence
\begin{equation*}
[X\times_Y Z/G] = [X/G]\times_{[Y/G]} [Z/G] = [X/G] \times_Y Z.
\end{equation*}
\end{proof}

Let $P,Q$ be toric monoids. For a monoid morphism $m:P \rightarrow Q$, we let $f:S_Q \rightarrow S_P$ be the associated toric morphism, $f_T:T_Q\rightarrow T_P$ be the algebraic torus morphism and $f_\A: \A_Q \rightarrow \A_P$ the morphism of Artin cones. 

{\color{black}
\begin{proposition} \label{cartesian}
Let ${[}S_Q \times T_P/T_Q{]}$ be the toric stack obtained by the torus action
\begin{equation*}
T_Q\times (S_Q\times T_P) \rightarrow S_Q\times T_P
\end{equation*}
associated to the monoid morphism
\begin{equation*}
\begin{split}
 Q\oplus P^{\gp}\rightarrow Q^{\gp}\oplus Q\oplus P^{\gp}, \quad (q,p)\mapsto (q+m^{\gp}(p), q, p).
\end{split}
\end{equation*}

Then there is a Cartesian diagram of Artin stacks
\begin{equation} \label{bigcomm}
    \begin{tikzcd}
    {[}S_Q \times T_P/T_Q{]} \arrow{r}{g}
    \arrow{d}{\eta} & S_P \arrow{d}{\chi}
    \\
    \A_Q \arrow{r}{f_\A} & \A_P,
    \end{tikzcd}
\end{equation}
where $\chi:S_P\rightarrow \A_P$ is the quotient map, the morphism $\eta$ is induced from the $T_Q$-equivariant map $S_Q\times T_P$ to $S_Q$ by taking the projection and $g$ is induced from the $\wt{\eta}: S_Q\times T_P \rightarrow S_P$ associated to the monoid morphism 
\begin{equation} \label{monoidmap}
\begin{split}
P \rightarrow Q\oplus P^{\gp}, \quad p\mapsto (m(p),-p),
\end{split}
\end{equation}
invariant under the $T_Q$-action on $S_Q\times T_P$.
\end{proposition}
}
\begin{proof}
With trivial $T_Q$-action on $\cA_P$, the map $S_Q\xrightarrow{f} S_P \rightarrow \cA_P$ is $T_Q$-invariant. Since $\chi:S_P\rightarrow \cA_P$ is representable, by Lemma \ref{quotientstacklemma},
\begin{equation*}
\begin{split}
\cA_Q\times_{\cA_P} S_P  = [S_Q\times_{\cA_P} S_P/T_Q],
\end{split}
\end{equation*}
with $T_Q$ acting on the fiber product by acting on $S_Q$. On the other hand, there is a commutative diagram
\begin{equation*}
    \begin{tikzcd}
      S_Q\times T_P \arrow[r,"f\times \id"] \arrow[d,"\pr_1"]&
      S_P\times T_P \arrow[r,"a"] \arrow[d,"\pr_1"]& S_P \arrow[d,"\chi"] \\
      S_Q \arrow[r,"f"] & S_P \arrow[r,"\chi"] & \cA_P,
    \end{tikzcd}
\end{equation*}
where $a$ is the group action. By \cite[\href{https://stacks.math.columbia.edu/tag/04M9}{Tag 04M9}]{stacks-project}, the commutative diagram on the right hand side is $2$-Cartesian. We then obtain that $S_Q\times_{\cA_P} S_P = S_Q\times T_P$ as both squares are fiber diagrams. The induced $T_Q$-action on $S_Q\times T_P$ is given by the monoid morphism in \eqref{monoidmap}, which is the unique action that makes the above fiber diagram $T_Q$-equivariant.
\end{proof}

Proposition \ref{cartesian} can be generalized to the toric strata of affine toric varieties.

\begin{corollary}\label{IdealFanLift} 
Let $m:P\rightarrow Q$ be a morphism of toric monoids. Let $K$ be an ideal of $P$ and $L$ be an ideal of $Q$ such that $m(K) \subseteq L$. Then there is a Cartesian diagram of idealized log stacks
\begin{equation*}
    \begin{tikzcd}
      {[}S_{Q,L} \times T_P/T_Q{]} \arrow{r}{g} \arrow{d}{\eta} & S_{P,K} \arrow{d}{\chi}\\
      \A_{Q,L} \arrow{r}{f_m} & \A_{P,K},
    \end{tikzcd}
\end{equation*}
where $\chi$ is the canonical quotient map and $f_m$ is the map of toric stacks associated to the monoid morphism $m$. The morphism $\eta$ is induced from the $T_Q$-equivariant map $S_{Q,L}\times T_P$ to $S_{Q,L}$ by taking the projection and $g$ is induced from the $\wt{\eta}: S_{Q,L}\times T_P \rightarrow S_{P,K}$ associated to the monoid morphism 
\begin{equation*}
\begin{split}
P \rightarrow Q\oplus P^{\gp}, \quad p\mapsto (m(p),-p),
\end{split}
\end{equation*}
invariant under the $T_Q$-action on $S_{Q,L}\times T_P$.

\end{corollary}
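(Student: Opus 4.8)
The plan is to deduce the corollary from Proposition~\ref{cartesian} by a base-change argument, exploiting that passing from $S_P$ to the toric stratum $S_{P,K}$, and from $\A_P$ to $\A_{P,K}$, is compatible with the $2$-fibre products involved (everything here takes place in algebraic stacks over $\kk$, with no log structures).

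First I would record three elementary observations. (a) The quotient map $\chi\colon S_P\to\A_P$ identifies $S_{P,K}$ with $S_P\times_{\A_P}\A_{P,K}$: indeed $S_{P,K}$ is the $T_P$-invariant closed subscheme of $S_P$ cut out by the ideal $(K)$, and $\A_{P,K}=[S_{P,K}/T_P]$, so the square is Cartesian; likewise $S_{Q,L}=S_Q\times_{\A_Q}\A_{Q,L}$. (b) The hypothesis $m(K)\subseteq L$ is exactly the condition under which $f^*\colon\kk[P]\to\kk[Q]$ descends to $\kk[P]/(K)\to\kk[Q]/(L)$ (both $(K)$ and $(L)$ are spanned by the monomials in $K$, resp.\ in $L$), so that $f_m\colon\A_{Q,L}\to\A_{P,K}$ and the restriction $\wt\eta\colon S_{Q,L}\times T_P\to S_{P,K}$ of the map from Proposition~\ref{cartesian} are well defined. (c) $S_{Q,L}$ is a $T_Q$-invariant closed subscheme of $S_Q$, hence $S_{Q,L}\times T_P$ inherits the $T_Q$-action of Proposition~\ref{cartesian}.

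Then I would run the following chain of canonical isomorphisms. Using (a) and associativity of fibre products,
\[
\A_{Q,L}\times_{\A_{P,K}}S_{P,K}\;\cong\;\A_{Q,L}\times_{\A_{P,K}}\bigl(S_P\times_{\A_P}\A_{P,K}\bigr)\;\cong\;S_P\times_{\A_P}\A_{Q,L}.
\]
Since $\A_{Q,L}\to\A_P$ factors as $\A_{Q,L}\to\A_Q\xrightarrow{f_\A}\A_P$, this equals $\bigl(S_P\times_{\A_P}\A_Q\bigr)\times_{\A_Q}\A_{Q,L}$, which by Proposition~\ref{cartesian} is $[S_Q\times T_P/T_Q]\times_{\A_Q}\A_{Q,L}$. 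Finally I would apply Lemma~\ref{quotientstacklemma} with $G=T_Q$, $X=S_Q\times T_P$, the $T_Q$-invariant map $X\to S_Q\to\A_Q=Y$, and the representable $Z=\A_{Q,L}\hookrightarrow\A_Q$; combined with $(S_Q\times T_P)\times_{\A_Q}\A_{Q,L}=(S_Q\times T_P)\times_{S_Q}S_{Q,L}=S_{Q,L}\times T_P$ this yields
\[
[S_Q\times T_P/T_Q]\times_{\A_Q}\A_{Q,L}\;\cong\;[S_{Q,L}\times T_P/T_Q].
\]
Composing all of the above gives the asserted Cartesian square.

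It then remains to match the two projections of $[S_{Q,L}\times T_P/T_Q]$ with the maps $\eta$ and $g$ of the statement, which I would do by restricting the descriptions in Proposition~\ref{cartesian}: the map $\eta$ of that proposition comes from the $T_Q$-equivariant projection $S_Q\times T_P\to S_Q$, whose restriction to $S_{Q,L}\times T_P$ is the projection to $S_{Q,L}$, hence descends to the $\eta$ here; and $g$ there comes from $\wt\eta\colon S_Q\times T_P\to S_P$ (the map attached to $p\mapsto(m(p),-p)$), whose restriction lands in $S_{P,K}$ by (b) and descends to the $g$ here. The main obstacle is organizational rather than conceptual: there is no new geometric input beyond Proposition~\ref{cartesian} and Lemma~\ref{quotientstacklemma}, but one must keep the several $2$-fibre products of Artin stacks straight and verify $2$-commutativity and compatibility of the structure maps through each identification, exactly as in the proof of Proposition~\ref{cartesian}.
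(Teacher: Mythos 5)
Your proposal is correct and follows essentially the same route as the paper's proof: both rewrite $\A_{Q,L}\times_{\A_{P,K}}S_{P,K}$ as $\A_{Q,L}\times_{\A_P}S_P$ via $S_{P,K}=\A_{P,K}\times_{\A_P}S_P$, factor $\A_{Q,L}\to\A_P$ through $\A_Q$ to invoke Proposition~\ref{cartesian}, and then apply Lemma~\ref{quotientstacklemma} to identify $\A_{Q,L}\times_{\A_Q}[S_Q\times T_P/T_Q]$ with $[S_{Q,L}\times T_P/T_Q]$. Your additional check that the two projections restrict to the $\eta$ and $g$ described in the statement is a worthwhile detail that the paper leaves implicit.
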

\begin{proof}
Since $S_{P,K} = \A_{P,K} \times_{\A_P} S_P$, we obtain that 
\begin{equation*}
\begin{split}
\A_{Q,L} \times_{\A_{P,K}} S_{P,K} & = \A_{Q,L} \times_{\A_{P,K}} (\A_{P,K} \times_{\A_P} S_P)\\
& = \A_{Q,L} \times_{\A_P} S_P \\
& =  \A_{Q,L} \times_{\cA_Q} (\cA_Q\times_{\cA_P} S_P)\\
& = \A_{Q,L} \times_{\A_Q} [S_Q \times T_P/T_Q].
\end{split}
\end{equation*}
The map $[S_Q\times T_P/T_Q]\rightarrow \A_Q$ is induced by the $T_Q$-invariant map $S_Q\times T_P\xrightarrow{\operatorname{pr_1}} S_Q \rightarrow \cA_Q$. By Lemma \ref{quotientstacklemma}, we then obtain that
\begin{equation*}
     \begin{split}
        \A_{Q,L} \times_{\A_Q} [S_Q \times T_P/T_Q]& = [\cA_{Q,L}\times_{\cA_Q} (S_Q\times T_P)/T_Q]\\
         & = [(\cA_{Q,L}\times_{\cA_Q}S_Q)\times T_P /T_Q] = [S_{Q,L}\times T_P/T_Q].
         \end{split}
\end{equation*}
\end{proof}

\section{Logarithmic Fiber Product of Toric Varieties}
\label{AppendixB}
In this section, we study the logarithmic fine, saturated fiber products of toric varieties. Unlike the fiber product in the category of schemes, the log fine, saturated fiber products of toric varieties are totally determined by the fiber product of the fans.

\begin{definition} \label{fiberproductoffans}
Let $\Sigma(X)\rightarrow \Sigma(Y)$ and $\Sigma(Z) \rightarrow \Sigma(Y)$ be morphisms of fans. Define the \textit{fiber product of fans} $\Sigma(X)\times_{\Sigma(Y)} \Sigma(Z)$ to be the fan $(\wt{\Sigma},\wt{N})$ with
\begin{enumerate}
\item $\wt{N}$ being the fiber product of lattices $N(X)\times_{N(Y)} N(Z)$,
\item $\wt{\Sigma}$ consisting of the cones $\sigma_X\times_{\sigma_Y} \sigma_Z$ with $\sigma_X \in \Sigma(X)$, $\sigma_Y \in \Sigma(Y)$ and $\sigma_Z \in \Sigma(Z)$.
\end{enumerate}
\end{definition}

\begin{lemma} \label{molcholemma}
Let $f: X\rightarrow Y$ and $g: Z\rightarrow Y$ be toric morphisms of toric varieties. Then, the fine, saturated fiber product $X\times_Y^{\fs} Z$ is a disjoint union of toric varieties and each of them is isomorphic to the toric variety of the fiber product of fans $\Sigma(X)\times_{\Sigma(Y)} \Sigma(Z)$. The number of the components is the lattice index $[(\im \alpha)^{\sat}: \im \alpha]$ under the map
\begin{equation} \label{alphadefinition}
\alpha: N(X)\times N(Z) \rightarrow N(Y), \quad (x,z)\mapsto f_N(x)-g_N(z)
\end{equation}
with $f_N$ and $g_N$ the lattice maps associated to the toric morphisms $f$ and $g$.

In particular, if the lattice map $\alpha$ has full dimensional image in $N(Y)$, then the lattice index is $[N(Y):\im \alpha]$.
\end{lemma}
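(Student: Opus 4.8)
The plan is to reduce the fine saturated fiber product to a computation purely with monoids and lattices, using the affine toric pieces, and then to globalize. First I would recall the local-to-global principle for fs fiber products: if $X$, $Y$, $Z$ are covered by $T$-invariant affine opens, then $X\times_Y^{\fs} Z$ is covered by the fs fiber products of the affine pieces, so it suffices to treat the affine case $X=\Spec\kk[P_X]$, $Y=\Spec\kk[P_Y]$, $Z=\Spec\kk[P_Z]$ with $P_Y\to P_X$, $P_Y\to P_Z$ induced by cone maps $\sigma_X\to\sigma_Y\leftarrow\sigma_Z$. On the level of rings, the scheme-theoretic fiber product corresponds to the monoid pushout $P_X\oplus_{P_Y} P_Z$ in the category of monoids, but the fs fiber product corresponds to the \emph{integral saturation} of this pushout: one takes the pushout $P_X^{\gp}\oplus_{P_Y^{\gp}} P_Z^{\gp}$ at the group level, looks at the image submonoid generated by $P_X$ and $P_Z$, and saturates it inside that group. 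The key subtlety is that $P_X^{\gp}\oplus_{P_Y^{\gp}} P_Z^{\gp}$ need not be torsion-free: writing it as the cokernel of $N(X)\times N(Z)\xleftarrow{} N(Y)$ dualized, equivalently as the group $(M(X)\oplus M(Y))/M(Z)$-type expression, its torsion subgroup is exactly $(\im\alpha)^{\sat}/\im\alpha$ for $\alpha$ as in \eqref{alphadefinition} (this is a standard fact: for a short exact sequence of lattices the torsion of the cokernel of the induced dual map is the finite group $(\im\alpha)^{\sat}/\im\alpha$).

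Next I would exploit that saturation kills no information on the free quotient but splits the space into a disjoint union indexed by the torsion. Concretely, let $G = P_X^{\gp}\oplus_{P_Y^{\gp}} P_Z^{\gp}$, let $G_{\tors}$ be its torsion subgroup, and let $\ol{G}=G/G_{\tors}$ be the free quotient, which is canonically $\wt N^{\vee,\gp}$ for $\wt N = N(X)\times_{N(Y)} N(Z)$. The fs monoid one obtains is the saturation of the image of $P_X\oplus P_Z$ in $G$; since $G$ is a finite union of cosets of the finite-index subgroup generated by $\ol{G}$ lifted appropriately, the resulting fs affine scheme is $\lvert G_{\tors}\rvert$ disjoint copies of $\Spec\kk[\wt P]$, where $\wt P$ is the saturation of the image of $\sigma_X^{\vee}\oplus\sigma_Z^{\vee}$ in $\ol{G}$, i.e. exactly $(\sigma_X\times_{\sigma_Y}\sigma_Z)^{\vee}\cap \wt N^{\gp}$. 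Here $\lvert G_{\tors}\rvert = [(\im\alpha)^{\sat}:\im\alpha]$. I would do this carefully for a single affine chart, checking that the saturation construction commutes with the localization maps $\sigma^{\vee}\to\delta^{\vee}$ for faces $\delta\subseteq\sigma$, so that the gluing of the affine charts of $X$, $Y$, $Z$ induces a gluing of these disjoint unions compatible with the fan fiber product of Definition \ref{fiberproductoffans}; the number of components $[(\im\alpha)^{\sat}:\im\alpha]$ is independent of the chart since $\alpha$ is defined on the ambient lattices.

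Finally the last sentence is immediate: if $\alpha$ has full-dimensional image in $N(Y)$ then $(\im\alpha)^{\sat}$ is a finite-index sublattice of $N(Y)$, so $[(\im\alpha)^{\sat}:\im\alpha] = [N(Y):\im\alpha]/[N(Y):(\im\alpha)^{\sat}]$ — but actually in the full-dimensional case one has $(\im\alpha)^{\sat}=N(Y)$ is wrong in general; rather the stated claim is that $[(\im\alpha)^{\sat}:\im\alpha]=[N(Y):\im\alpha]$, which holds precisely because $\im\alpha$ full-dimensional forces $(\im\alpha)^{\sat}$ to be all of $N(Y)$ only when $\alpha$ is surjective onto a saturated sublattice; I would phrase the corollary as: when $\im\alpha$ is full-dimensional, $(\im\alpha)^{\sat}=N(Y)$, hence the index is $[N(Y):\im\alpha]$. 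I expect the main obstacle to be the bookkeeping in the globalization step — verifying that the torsion group $G_{\tors}$ and hence the set of connected components is genuinely constant across all affine charts and that the fan fiber product cones assemble correctly — rather than the affine computation, which is essentially the structure theory of fs pushouts of toric monoids. I would cite the relevant fs-fiber-product facts from the log-geometry literature (e.g. Ogus or Gillam–Molcho) to keep the chart-level saturation argument short.
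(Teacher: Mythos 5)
Your proposal is correct and follows essentially the same route as the paper. The paper simply cites \cite{SMUniversal} (Molcho's universal-property paper) outright for the structural fact that the fs fiber product is a disjoint union of copies of the toric variety of the fiber-product fan, with the number of copies equal to the order of the torsion of $M(X)\oplus_{M(Y)}M(Z)$; it then spends its effort on an explicit lattice chase to identify that order as $[(\im\alpha)^{\sat}:\im\alpha]$ (via $\Tor\cong\ker\psi/\im\phi$ and duality). You do the opposite trade: you propose to re-derive the disjoint-union structure affine-locally from the fs pushout of monoids and the splitting $\kk[G]\cong\kk[\ol G]\otimes\kk[G_{\tors}]$ in characteristic $0$, while outsourcing the torsion computation to the ``standard fact'' $\Tor(\coker\alpha^*)\cong(\im\alpha)^{\sat}/\im\alpha$. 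Both versions rest on the same core identifications, and your observation that $G=M(X)\oplus_{M(Y)}M(Z)$ is chart-independent is exactly what makes the globalization unproblematic, so the globalization worry you flag is not really an issue. One small remark on your last paragraph: when $\im\alpha$ has full rank in $N(Y)$, $(\im\alpha)^{\sat}=N(Y)$ \emph{always} holds (full rank forces $N(Y)/\im\alpha$ to be torsion), so the hedging about ``only when $\alpha$ is surjective onto a saturated sublattice'' is unnecessary; your concluding sentence there is the correct and complete statement.
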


\begin{proof}
The fine, saturated logarithmic fiber product of the toric varieties is discussed in \cite[Rmk 2.2.5]{MolchoSam2021Ussr} in detail. By \cite{MolchoSam2021Ussr}, the fine, saturated log fiber product $X\times_Y^{\fs} Z$ is a disjoint union of schemes, each of which is isomorphic to the toric variety of the fiber product of fans $\Sigma(X)\times_{\Sigma(Y)} \Sigma(Z)$. The reason of the fiber products containing several components is due to the fact that the torsion subgroup $\Tor$ of the fibered sum of monoids $M(X)\oplus_{M(Y)} M(Z)$ is nontrivial, with $M(X)$, $M(Y)$ and $M(Z)$ the character lattices. As we are working over a field of characteristic $0$, the number of the components is the order of the group $\Tor$.

Note that we have an exact sequence of monoids
\begin{equation} \label{dualmonoidmap}
\begin{split}
 M(Y) \xrightarrow{\phi} M(X)\oplus M(Z) \xrightarrow{\psi'} M(X)\oplus_{M(Y)} M(Z) \rightarrow 0,
\end{split}
\end{equation}
with $\phi = (f^*_N,-g^*_N)$ and $\psi'$ the cokernel map of $\phi$.
Let 
\begin{equation*}
\psi: M(X)\oplus M(Z)\xrightarrow{\psi'} M(X)\oplus_{M(Y)} M(Z) \xrightarrow{q} M(X)\oplus_{M(Y)} M(Z)/\Tor.
\end{equation*} Then, the torsion group
\begin{equation*}
\begin{split}
\Tor = \ker q = \psi'(\ker \psi) \cong
\ker \psi / (\ker \psi \cap \ker \psi') = \ker \psi / \im \phi.
\end{split}
\end{equation*}
As $\im \phi$ is a full dimensional sublattice of $\ker \psi$, by lattice geometry, the order of the torsion group $\Tor$ is the same as the lattice index $[(\im \phi)^*: (\ker \psi)^*]$.

We finish the proof by showing that under the inclusion of the lattice 
\begin{equation*}
\alpha': (\im \phi)^* \rightarrow N(Y),
\end{equation*} the image of $(\im\phi)^*$ is $(\im \alpha)^{\sat}$, and the image of $(\ker\psi)^* \rightarrow (\im \phi)^*$ is $(\im \alpha)$, with $\alpha$ defined in \eqref{alphadefinition}. 

The cokernel of the map $\alpha'$ is isomorphic to $(\ker\phi)^*$, following the dual of the short exact sequence
\begin{equation*}
0 \rightarrow \ker \phi \rightarrow M(Y) \rightarrow \im \phi \rightarrow 0.
\end{equation*}
Hence, $\coker \alpha'$ is torsion free. Therefore, the image of $(\im \phi)^*$ under $\alpha'$ is a saturated sublattice of $N(Y)$ containing $(\im \alpha)$.

Note that $\alpha$ is the dual map of $\phi$ in \eqref{dualmonoidmap}. Taking the dual of the exact sequence \eqref{dualmonoidmap}, we obtain an exact sequence of the lattices
\begin{equation*}
0 \rightarrow N(X)\times_{N(Y)} N(Z) \xrightarrow{\psi^*} N(X)\times N(Z) \xrightarrow{\alpha} N(Y).
\end{equation*}
Therefore $\im \alpha \cong \coker \psi^* = (\ker \psi)^*$, with the second equality following the dual of the short exact sequence
\begin{equation*}
0\rightarrow \ker(\psi) \rightarrow M(Y) \xrightarrow{\psi} \im(\psi) \rightarrow 0.
\end{equation*}
The lattice quotient 
\begin{equation*}
[(\im\phi)^*: (\ker \psi)^*] = [(\im \alpha)^{\sat}: \im \alpha],
\end{equation*}
which equals to the order of the torsion group $\Tor$.

\end{proof}




\printbibliography[title = {Reference}]

\end{document}